\newcommand\hl[1]{#1}                               
\begin{document}

\title{Rapid and Accurate Methods for Computing Whiskered Tori and their Manifolds in Periodically Perturbed Planar Circular Restricted 3-Body Problems}
\titlerunning{Computing Whiskered Tori and their Manifolds in Periodically Perturbed PCRTBP}

\author{Bhanu Kumar \and Rodney L. Anderson \and Rafael de la Llave}

%
%

\institute{B. Kumar \and R. de la Llave
\at School of Mathematics, Georgia Institute of Technology, 686 Cherry St. NW, Atlanta, GA 30332.,
USA \\\email{bkumar30@gatech.edu}
\and
R.L. Anderson
\at Jet Propulsion Laboratory, California Institute of Technology, 4800 Oak Grove Drive, Pasadena, CA 91109
}

\maketitle{}

\begin{abstract}
When the planar circular restricted 3-body problem (RTBP) is periodically perturbed, families of unstable periodic orbits break up into whiskered tori, with most tori persisting into the perturbed system. In this study, we 1) develop a quasi-Newton method which simultaneously solves for the tori and their center, stable, and unstable directions; 2) implement continuation by both perturbation as well as rotation numbers; 3) compute Fourier-Taylor parameterizations of the stable and unstable manifolds; 4) regularize the equations of motion; and 5) globalize these manifolds. Our methodology improves on efficiency and accuracy compared to prior studies, and applies to a variety of periodic perturbations. We demonstrate the tools near resonances in the planar elliptic RTBP.
\end{abstract}

\section{Introduction}

Numerous studies have been carried out in recent years where quasi-periodic orbits of various restricted 3 or 4-body models have been computed and used for applications to space mission design. For instance, \citet{Farres2017} studied periodic and quasi-periodic orbits in the phase space of the Augmented Hill 3-Body problem near the L\textsubscript{1} and L\textsubscript2 libration points. \citet{olikaraThesis} applied collocation methods to the computation of invariant tori near L\textsubscript1 and L\textsubscript2 in both spatial circular restricted 3-body problem (CRTBP) as well as periodically-perturbed planar CRTBP models. And looking further in the past, the book series of \citet{simoetal} presented many other computational methods and applications for quasi-periodic orbits near libration points. However, all of these studies, as well as almost all other prior research, use methods of computing tori which require solving large-dimensional linear systems of equations at each step of the differential correction. Furthermore, while the quasi-periodic orbits are computed successfully using those methods, stability information including stable and unstable directions must be computed separately. Also, in most prior work, these linear stable/unstable directions are directly used as approximate local stable/unstable manifolds for the tori, neglecting higher order terms and thus losing accuracy. 

Another characteristic of the vast majority of prior research, including the previously mentioned studies, is that the analysis focuses on tori associated with the libration points, such as Lissajous or quasi-halo orbits.  \citet{Bosanac2018} did compute invariant tori near stable resonant periodic orbits in the planar circular restricted 3-body problem (PCRTBP), but the tori computed are stable, without stable or unstable manifolds. Unstable resonant periodic orbits and their stable and unstable manifolds are known to be important mechanisms of dynamical transport in the interior and exterior realms of the CRTBP  \citep{KoLoMaRo}, and these orbits have seen significant interest and use as a tool for trajectory design in multi-body systems. For example, out of the nine Titan-to-Titan encounters made by Cassini between July 2013 and June 2014, eight of the nine resulting transfers involved resonances \citep{vaqueroCassini}. More recently, the baseline mission design for the Europa Lander mission concept made profitable use of these mechanisms for the final approach to the surface of Europa \citep{Anderson2019}. For further examples and more background
on resonant orbits, see \citet{Anderson2016}. 

In this study, we develop efficient algorithms which enable simultaneous computation of not only unstable invariant tori, but also of their center, stable, and unstable directions (also known as bundles) in periodically perturbed PCRTBP models. Solving for bundles alongside the tori actually allows us to avoid solving large linear systems,  thus improving the algorithmic efficiency of our method compared to tori-only methods used in previous investigations. We apply our tools to the computation of unstable tori and bundles near PCRTBP resonances, using the Jupiter-Europa planar elliptic RTBP as the dynamical model for demonstration and a solution tolerance of $10^{-7}$. Next, we use the results of the preceding step to start a recursive parameterization method  \citep{CabreFontichLlave, huguet2012, haroetal} for the computation of high order Fourier-Taylor approximations of the stable and unstable manifolds of the tori. We demonstrate improvements in manifold accuracy as compared with the linear manifold approximations used in other studies; these parameterizations can also be differentiated, which is useful for computing intersections of manifolds. Finally, we develop a Levi-Civita regularization for the equations of motion, which is used to globalize the parameterized manifolds \hl{even when they pass through singularities of the equations of motion}. 

We have included several proofs throughout this paper to justify our methods and motivate possible adaptations; these may be skipped without detriment by readers primarily interested in details of the algorithm implementation. 

\section{Models and Background} \label{modelsection}
\subsection{Planar Circular Restricted 3-body Problem}
In this study, we consider periodic perturbations of the PCRTBP. The PCRTBP describes the motion of an infinitesimally small particle (thought of as a spacecraft) under the gravitational influence of two large bodies of masses $m_{1}$ and $m_{2}$, collectively referred to as the primaries. In this model, $m_{1}$ and $m_{2}$ revolve about their barycenter in a circular Keplerian orbit. Units are also normalized so that the distance between the two primaries becomes 1, their period of revolution becomes $2 \pi$, and $\mathcal{G}(m_{1}+m_{2})$ becomes 1. We define a mass ratio $\mu = \frac{m_{2}}{m_{1}+ m_{2}}$, and use a synodic, rotating non-inertial Cartesian coordinate system centered at the primaries' barycenter such that $m_{1}$ and $m_{2}$ are always on the $x$-axis. In the planar CRTBP, we also assume that the spacecraft moves in the same plane as the primaries. In this case, the equations of motion are Hamiltonian with form \citep{celletti}

\begin{equation} \label{pcrtbpH_EOM} \dot x = \frac{\partial H_{0}}{\partial p_{x}} \quad \dot y = \frac{\partial H_{0}}{\partial p_{y}} \quad \quad \dot p_{x} = -\frac{\partial H_{0}}{\partial x} \quad \dot p_{y} = -\frac{\partial H_{0}}{\partial y} \end{equation}
\begin{equation}  \label{pcrtbpH} H_{0}(x,y,p_x,p_{y})= \frac{p_{x}^{2}+p_{y}^{2}}{2} + p_{x}y -p_{y}x - \frac{1-\mu}{r_{1}} - \frac{\mu}{r_{2}} \end{equation}
where $r_{1} = \sqrt{(x+\mu)^{2} + y^{2}}$ is the distance from the spacecraft to $m_{1}$ and $r_{2} = \sqrt{(x-1+\mu)^{2} + y^{2}} $ is the distance to $m_{2}$. 

There are two important properties of Eq. \eqref{pcrtbpH_EOM}-\eqref{pcrtbpH} to note. First of all, the Hamiltonian in Eq. \eqref{pcrtbpH} is autonomous and is hence an integral of motion. Hence, trajectories in the PCRTBP are restricted to 3-dimensional energy submanifolds of the state space satisfying $H(x,y,p_{x}, p_{y})=$ constant. The second property is that the equations of motion have a time-reversal symmetry. Namely, if $(x(t), y(t), t)$ is a solution of Eq. \eqref{pcrtbpH_EOM}-\eqref{pcrtbpH} for $t > 0$, then $(x(-t), -y(-t), t)$ is a solution for $t < 0$. 

\subsection{Periodic Perturbations of the PCRTBP} \label{pertPCRTBPsection}
The PCRTBP model exhibits many of the important dynamical phenomena present in multi-body celestial systems. However, there are many effects which are not included in the PCRTBP; many of these other influences on the spacecraft motion act in an approximately time-periodic manner, while preserving the Hamiltonian nature of the system. Here we will study dynamical models where one such periodic forcing effect is considered in addition to the PCRTBP. The equations of motion in this case are given by Eq. \eqref{H_EOM} along with time-periodic Hamiltonian \eqref{perturbed_H} 
\begin{equation} \label{H_EOM} \dot x = \frac{\partial H_{\varepsilon}}{\partial p_{x}} \quad \dot y = \frac{\partial H_{\varepsilon}}{\partial p_{y}} \quad \quad \dot p_{x} = -\frac{\partial H_{\varepsilon}}{\partial x} \quad \dot p_{y} = -\frac{\partial H_{\varepsilon}}{\partial y} \quad \quad \dot \theta_{p} = \Omega_{p} \end{equation}
\begin{equation}  \label{perturbed_H} H_{\varepsilon}(x,y,p_x,p_{y}, \theta_{p})= H_{0}(x,y,p_x,p_{y})+ H_{1}(x,y,p_x,p_{y}, \theta_{p}; \varepsilon)\end{equation}
where $\theta_{p} \in \mathbb{T}$ is an angle, $H_{0}$ is the PCRTBP Hamiltonian given by Eq. \eqref{pcrtbpH},  $H_{1}$ is the perturbation by the time-periodic effect and satisfies $H_{1}(x,y,p_x,p_{y}, \theta_{p}; 0)=0$, and $\varepsilon > 0$ and $\Omega_{p}$ are the perturbation parameter and perturbation frequency, respectively. $\varepsilon$ signifies the strength of the perturbation, $\varepsilon=0$ being the unperturbed PCRTBP, and $\Omega_{p}$ is a known constant frequency. The perturbation from $H_{1}$ is $2\pi / \Omega_{p}$ periodic, with $\theta_{p}$ being the phase of the perturbation. Note that the Hamiltonian function will no longer be constant along trajectories. 

There are many different Hamiltonian periodically perturbed PCRTBP models of interest for applications. A common perturbation added to the PCRTBP is that of a third large body revolving in a circle (or approximate circle) around $m_{1}$ or $m_{2}$. Examples of these restricted 4-body models include the bicircular problem  \citep{simo1995bicircular}, the coherent quasi-bicircular problem  \citep{andreu1998quasi}, and the Hill restricted 4-body problem \citep{scheeres1998restricted}. Another common periodically-perturbed PCRTBP model is the planar elliptic RTBP (PERTBP).

\subsection{Planar Elliptic Restricted 3-body Problem}
The tools we develop in this paper are applicable to a wide variety of Hamiltonian periodic perturbations as discussed in the previous section. However, in this study, we use the PERTBP for numerical demonstration of their usage.  The PERTBP has the same assumptions as the PCRTBP except that one allows $m_{1}$ and $m_{2}$ to move around their barycenter in an elliptical Keplerian orbit of eccentricity $\varepsilon > 0$. The length unit is normalized such that the semi-major axis of the $m_{1}$-$m_{2}$ orbit is 1. As the period of the primaries' orbit is $2\pi$, we have that the perturbation frequency $\Omega_{p} = 1$, so we can consider $\theta_{p} = t$ modulo $2\pi$. 

The PERTBP model we use is essentially the same as that used by \citet{hiday1994transfers}, except for a transformation from position-velocity to position-momentum coordinates and a restriction to the $xy$-plane. The coordinate system is again such that $m_{1}$ and $m_{2}$ are always on the $x$-axis with the origin at their barycenter. However, the distance between them is now time-periodic, with periapse at $t=0$; this is different from the pulsating coordinates used by \citet{szebehely1969}. The equations of motion are Eq. \eqref{H_EOM} with time-periodic Hamiltonian 
\begin{equation}  \label{pertbpH} H_{\varepsilon}(x,y,p_x,p_{y},t)= \frac{p_{x}^{2}+p_{y}^{2}}{2} + n(t)(p_{x}y -p_{y}x) - \frac{1-\mu}{r_{1}} - \frac{\mu}{r_{2}} \end{equation}
where we have $n(t) = \frac{\sqrt{1-\varepsilon^{2}}}{(1-\varepsilon \cos E(t))^{2}}$, $r_{1} = \sqrt{(x+\mu(1-\varepsilon \cos E(t)))^{2} + y^{2}} $ and $ r_{2} = \sqrt{(x-(1-\mu)(1-\varepsilon \cos E(t)))^{2} + y^{2}}$. $E(t)$ is the $2\pi$-periodic eccentric anomaly of the elliptical $m_{1}$-$m_{2}$ orbit, and can be computed by solving the standard Kepler's equation \hl{$M = E - \varepsilon \sin E$} \citep{bmw}. $n(t)$ is the time derivative of the $m_{1}$-$m_{2}$ true anomaly. From Eq. \eqref{H_EOM} and \eqref{pertbpH}, we have $p_{x}=\dot x-n(t)y$ and $p_{y}=\dot y+n(t)x$.

\subsection{Resonant Periodic Orbits} \label{resonantpo}

\hl{Mean-motion resonances are PCRTBP periodic orbits which, by definition, persist from elliptical orbits of the Kepler problem, and hence are not in the center manifold of any of the libration points}. Their main characteristic is that their orbital periods are nearly rational multiples of $2\pi$, the period of $m_{1}$-$m_{2}$ motion (the periods become exact rational multiples of $2\pi$ as $\mu \rightarrow 0$). A family of resonant periodic orbits is characterized by a ratio $m:n$, $m, n \in \mathbb{Z}^{+}$. This notation means that in an inertial reference frame, the spacecraft makes approximately $m$ revolutions about $m_{1}$ in the time that $m_{1}$ and $m_{2}$ revolve $n$ times around their barycenter. 

For a given resonance $m:n$ in the PCRTBP with $\mu>0$, there typically exist one stable and one unstable resonant periodic orbit inside the submanifold $H_{0}(x,y,p_{x},p_{y})=E$, for each fixed value of $E$ in some interval of energy values $[E_{min}, E_{max}]$ \citep{kumar2021journal}. This gives us continuous families of stable and unstable resonant periodic orbits; the periods of the orbits within a given  family vary with $E$. The unstable resonant periodic orbits have monodromy (Floquet) matrix eigenvalues 1, 1, $\lambda_{s}$, and $\lambda_{u} = \lambda_{s}^{-1}$, where $| \lambda_{s} | < 1$. Thus, there are 2D stable and unstable manifolds attached to the unstable resonant periodic  orbits. These manifolds serve as low-energy pathways to and from these periodic orbits. Furthermore, the manifolds of  different resonances at the same energy level can intersect in the PCRTBP, giving heteroclinic connections which allow for propellant-free resonance transitions. For more details see \citet{kumar2021journal}.

\section{Normally Hyperbolic Invariant Manifolds and Existence of Tori} \label{nhim}

As discussed in Section \ref{resonantpo}, for each value of $E$ in some interval of energy values $[E_{min}, E_{max}]$, there exists one unstable $m:n$ resonant periodic orbit in the PCRTBP. Each of these periodic orbits is diffeomorphic to a circle $\mathbb{T}$. Now, consider the union of all the unstable $m:n$ resonant periodic orbits for all values of $E \in [E_{min}, E_{max}]$. The resulting set is a 2D manifold $\Xi$ diffeomorphic to $\mathbb{T} \times [0,1]$ in the 4D PCRTBP phase space. Furthermore, at each point of $\Xi$, there are stable and unstable directions transverse to the manifold, which come from the stable and unstable eigenvectors of the periodic orbits which make up this manifold. Since the phase space is 4D and $\Xi$ is 2D, at any point of $\Xi$, the stable and unstable directions together with the 2D manifold tangent space span the entire phase space. This means that $\Xi$ is a normally hyperbolic invariant manifold (NHIM) of the PCRTBP flow; in fact, any family of unstable PCRTBP periodic orbits forms such a NHIM. For a rigorous definition of NHIMs for flows, see \citet{fenichel1971persistence}. 

NHIMs are important because they persist under sufficiently small perturbations of the equations of motion \citep{fenichel1971persistence}; as our numerical results later in this paper will demonstrate, the perturbations we study are indeed ``sufficiently small". However, to apply this NHIM persistence result to our case of time-periodic perturbations, the original and perturbed systems must be defined on the same phase space. Hence, we take the PCRTBP from its original 4D phase space $(x,y,p_{x}, p_{y})$ to the 5D extended phase space $(x,y,p_{x}, p_{y},\theta_{p})$, $\theta_{p} \in \mathbb{T}$. We define $\dot \theta_{p} = \Omega_{p}$  for the unperturbed PCRTBP in extended phase space, while $x,y,p_{x},$ and $p_{y}$ still follow Equations \eqref{pcrtbpH_EOM} and \eqref{pcrtbpH}. Hence, periodic orbits of period $T_{1}$ from the original 4D PCRTBP phase space become 2D quasi-periodic orbits in the PCRTBP defined on the extended phase space, with one of the frequencies being $\Omega_{1}=2\pi/T_{1}$ and the other being $\Omega_{p}$, unless $\Omega_{1}/\Omega_{p}$ is rational. The NHIM $\Xi$ from the original phase space becomes the NHIM $\bar \Xi = \Xi \times \mathbb{T}$ in the extended phase space due to the extra angle. Hence, $\bar \Xi$ is diffeomorphic to $\mathbb{T}^{2} \times [0,1]$. 

Now, since the PCRTBP and its NHIM have been transferred to the same extended phase space as the periodically-perturbed models, we can conclude that for $\varepsilon > 0$ sufficiently small, $\bar \Xi$ will persist as a NHIM $\bar \Xi_{\varepsilon}$ of the perturbed equations of motion \eqref{H_EOM} and \eqref{perturbed_H}. $\bar \Xi_{\varepsilon}$ will be diffeomorphic to $\bar \Xi$ and hence also to $\mathbb{T}^{2} \times [0,1]$. Furthermore, note that $\bar \Xi$ in the extended phase space PCRTBP is foliated by 2D invariant tori, since $\Xi$ was foliated by periodic orbits. From KAM theory \citep{capinski2016}, we can expect that inside $\bar \Xi_{\varepsilon}$, the invariant tori from $\bar \Xi$ with sufficiently non-resonant frequencies $\Omega_{1}$ and $\Omega_{p}$ will also persist after perturbation with only small gaps between them. Hence, we will focus this study on these 2D tori in the periodically-perturbed PCRTBP models. 

\subsection{Stroboscopic Maps, NHIMs, and Invariant Circles} \label{stroboscopic}

Any 2D invariant torus in the periodically-perturbed PCRTBP extended phase space can be parameterized as the image of a function of two angles $K_2 : \mathbb{T}^{2} \rightarrow \mathbb{R}^{4} \times \mathbb{T}$. A quasi-periodic trajectory $\bold{x}(t)$ lying on this torus can be expressed as
	\begin{equation} \bold{x}(t) = K_2(\theta, \theta_p)  \quad \quad \quad \theta = \theta_{0}+\Omega_1 t, \quad  \theta_p = \theta_{p,0}+\Omega_p t \end{equation}
where $\theta_{0}$ and $\theta_{p,0}$ are determined from the initial condition $\bold{x}(0)$, and $\Omega_{1} = 2\pi/T_{1}$, where $T_{1}$ is the period of the PCRTBP periodic orbit associated with the torus. $\theta_{p}$ is the same perturbation phase angle defined in Section \ref{pertPCRTBPsection}, so one of the two torus frequencies will be $\Omega_p$. We can then define the stroboscopic map $F_{\varepsilon}: \mathbb{R}^{4} \times \mathbb{T} \rightarrow \mathbb{R}^{4} \times \mathbb{T}$ as the time-$2\pi/\Omega_p$ mapping of extended phase space points by the equations of motion \eqref{H_EOM} and \eqref{perturbed_H} with perturbation parameter $\varepsilon$. We find that
	 \begin{equation} \label{invariance_fK2} F_{\varepsilon}(K_2(\theta, \theta_p)) = K_2(\theta+\omega, \theta_p), \text{ where  } \omega = 2\pi \Omega_1/\Omega_p \end{equation} 
since the angle $\theta_p$ advances by $2\pi$ in the time $2\pi/\Omega_{p}$ and is therefore invariant under $F_{\varepsilon}$. Hence, we can fix $\theta_p$ (for our PERTBP test case we choose $\theta_{p}=0$), and then define $K(\theta) = K_2(\theta, \theta_p)$. Then, Eq. \eqref{invariance_fK2} becomes 
\begin{equation} \label{invarianceeps} F_{\varepsilon}(K(\theta)) = K(\theta+\omega)  \end{equation} 
By ignoring the last fixed $\theta_{p}$ component of the extended phase space and a slight abuse of notation, we can consider $F_{\varepsilon}:\mathbb{R}^{4} \rightarrow \mathbb{R}^{4}$ and $K:\mathbb{T} \rightarrow \mathbb{R}^{4}$. 

Eq. \eqref{invarianceeps} implies that $K$ parameterizes an invariant 1D torus of the map $F_{\varepsilon}$. It is significantly more computationally efficient to compute 1D invariant tori (invariant circles) $K$ of the map $F_{\varepsilon}$ in 4D phase space than 2D tori $K_{2}$ of the flow in the 5D extended phase space. The reason for this is that the reduction in the dimension of the torus helps mitigate the curse of dimensionality (see remark \ref{dimRemark}). Hence, from this point onwards, we will consider the map $F_{\varepsilon}$ and its invariant circles and manifolds, rather than invariant objects of the continuous time flow. Similar approaches are also used by \citet{zhang} and \citet{haro2021flow}. Note that the computation of $F_{\varepsilon}$ is just the integration of an ODE. 

As a final note, the stroboscopic map allows us to use the theory of NHIMs of maps \citep{fenichel1971persistence, hirschPughShub} to understand the presence of invariant circles in periodically-perturbed PCRTBP models. In particular, note that unstable periodic orbits of the unperturbed PCRTBP are also unstable invariant circles of the map $F_{\varepsilon = 0}$. Hence, the PCRTBP flow NHIM $\Xi$ defined at the beginning of Section \ref{nhim} is also a NHIM of the map $F_{\varepsilon = 0}$. Just as in the case of flows, the theory shows that NHIMs of maps persist under sufficiently small perturbations of the map. Hence, for sufficiently small $\varepsilon > 0$, $\Xi$ will persist as a NHIM $\Xi_{\varepsilon}$ of $F_{\varepsilon}$, with $\Xi_{\varepsilon}$ diffeomorphic to $\Xi$ and hence also to $\mathbb{T} \times [0,1]$. Furthermore, since $\Xi$ is foliated by invariant circles whose rotation numbers satisfy a twist condition, KAM theory \citep{capinski2016} tells us that that the invariant circles with sufficiently irrational rotation number $\omega$ persist inside $\Xi_{\varepsilon}$ for $\varepsilon > 0$. 

\begin{remark} \label{dimRemark}
\hl{The evaluation of $F_{\varepsilon}$ can be computationally expensive. Hence, one may wonder if the dimension reduction actually helps the computation efficiency or not. However, the problems of propagating in time and computing the tori are numerically very different; while numerical integration remains very feasible for all the values of $\varepsilon$, tori can break down for larger values of the parameter. Hence, the flow torus parameterization $K_{2}$ has very anisotropic regularity and behavior. It remains extremely smooth in the flow direction, but in the transversal direction, it may lose differentiability. Using algorithms that recognize this effect is advantageous. }

\hl{Also, the problem of integrating ODE's has been extensively studied over many years and there are many efficient algorithms that can be tried, including adaptive algorithms that use smaller step sizes on small spots where the equation is stiff. However, computing the 2D torus parameterization $K_{2}$ requires a uniform grid discretizing $\mathbb{T}^{2}$, which would result in unnecessarily large numbers of discretization points throughout the trajectory. For our algorithm, the operation count is close to linear in the number of grid points, so the cost of adding one more dimension would be significant. Finally, also note that numerically integrating a grid of points is very readily parallelizable by assigning each trajectory to a thread. }

\end{remark}

\section{A Parameterization Method for Computing Invariant Tori and Bundles} \label{torisection}

In this section, we develop and implement a parameterization method for the simultaneous computation of unstable invariant tori as well as their center, stable, and unstable directions, also known as bundles, for stroboscopic maps of periodically-perturbed PCRTBP models. The method works both for tori with cylindrical stable/unstable bundles as well as for those whose bundles are M\"obius strips (see Section \ref{continuationSection}). We present the analytical details and derivation of the method, as well as the considerations required for its discretization and numerical implementation in a computer program. Our method is broadly inspired by those of \citet{haroetal}, except for the additional presence of a center bundle which is not considered by them and requires extra calculations. A different but conceptually related method can also be found in the work of \citet{fontichLlaveSire}. 

\subsection{The Parameterization Method for Invariant Manifolds} \label{paramsectiongeneral}

The parameterization method is a general technique for the computation of many kinds of invariant objects in dynamical systems, including tori and stable and unstable manifolds. \citet{haroetal} describe several applications. The idea is that given a map $F: \mathbb{R}^{d} \rightarrow \mathbb{R}^{d}$, if we know that there is an $F$-invariant object diffeomorphic to some model manifold $\mathcal{M}$, then we can solve for a function $W:\mathcal M \rightarrow \mathbb{R}^{d}$ and a diffeomorphism $f: \mathcal M \rightarrow \mathcal M$ such that the invariance equation
\begin{equation}  \label{invariancequation}   F(W(s)) = W(f(s)) \end{equation}
holds for all $s \in \mathcal M$. $W$ is referred to as the parameterization of the invariant manifold, and $f$ as the internal dynamics on $\mathcal M$. Eq. \eqref{invariancequation} means that $F$ maps the image $W(\mathcal M)$ into itself, so that $W(\mathcal M)$ is the invariant object in the full space $\mathbb{R}^{d}$. 

\subsection{Equations for Parameterization Method for Invariant Tori and Bundles}

For notational convenience, denote the stroboscopic map $F_{\varepsilon}$ from Section \ref{stroboscopic} as $F$ from now on. Assume we are computing an $\varepsilon > 0$ invariant circle corresponding to a PCRTBP periodic orbit of known period $T_{1}$; this fixes the rotation number $\omega=2\pi \Omega_1/\Omega_p$ since $\Omega_1=2\pi/T_{1}$. As given in Eq. \eqref{invarianceeps}, we wish to find a parameterization $K:\mathbb{T} \rightarrow \mathbb{R}^{4}$ of the $F$-invariant circle satisfying the torus invariance equation 
\begin{equation} \label{invariance} F(K(\theta)) = K(\theta+\omega)  \end{equation} 
Eq. \eqref{invariance} is equivalent to the framework of Section \ref{paramsectiongeneral} with $\mathcal M = \mathbb{T}$ and $f(s) = s+\omega$. In addition, for our quasi-Newton method, we will add another equation to be solved for matrix-valued periodic functions $P(\theta)$, $\Lambda(\theta): \mathbb{T} \rightarrow \mathbb{R}^{4 \times 4}$ such that
		 \begin{equation}  \label{bundleEquations} DF(K(\theta)) P(\theta) = P(\theta+\omega) \Lambda(\theta) \end{equation} 
Furthermore, we mandate that $\Lambda(\theta)$ has the form 
\begin{equation} \label{Lambdaform} \Lambda(\theta) = \begin{bmatrix}
1 &  T(\theta)   & 0 & 0 \\ 0 &  1   & 0 & 0 \\ 0 & 0  & \lambda_s(\theta) & 0 \\ 0 &  0 & 0 & \lambda_u(\theta) \end{bmatrix}  \end{equation}
for some functions $T(\theta), \lambda_{s}(\theta), \lambda_{u}(\theta):\mathbb{T} \rightarrow \mathbb{R}$ to be solved for. The form of Eq. \eqref{Lambdaform} is motivated by geometric considerations that we will detail in Section \ref{understandP}. 

As will be explained at the end of Section \ref{Kstep}, solving simultaneously for $K$, $P$, and $\Lambda$ is actually more efficient than solving for $K$ alone; the quasi-Newton method we will present for solving Eq. \eqref{invariance}-\eqref{bundleEquations} uses the near-diagonal form of $\Lambda$ to decouple the linear system of equations we get in each differential correction step. The method will require only algebraic operations, phase shifts, and the solving of 1D equations for scalar-valued functions. 

\subsection{Understanding the $P$ and $\Lambda$ Matrices} \label{understandP}

In addition to their numerical utility, $P$ and $\Lambda$ have a geometric significance which will be useful when computing stable and unstable manifolds later on. Since $K$ is contained in the 2D normally hyperbolic invariant manifold $\Xi_{\varepsilon}$ defined at the end of Section \eqref{stroboscopic}, we know that there are tangent, center, stable, and unstable directions to the torus at each point $K(\theta)$. The columns of $P$ will be these four vector bundles, with $\lambda_{s}(\theta)$ and $\lambda_{u}(\theta)$ set to the stable and unstable multipliers for the corresponding bundles. To see why, consider Eq. \eqref{bundleEquations} column by column. 

Let $\bold{v}_{t}(\theta), \bold{v}_{c}(\theta) $, $ \bold{v}_{s}(\theta) $, and $\bold{v}_{u}(\theta)$ denote the first, second, third, and fourth columns of $P(\theta)$, respectively. Then, Equations \eqref{bundleEquations} and \eqref{Lambdaform} are equivalent to
\begin{align} \label{col1} DF(K(\theta)) \bold{v}_{t}(\theta) &= \bold{v}_{t}(\theta+\omega) \\
 \label{col2} DF(K(\theta)) \bold{v}_{c}(\theta) &= T(\theta) DK(\theta+\omega)+\bold{v}_{c}(\theta+\omega) \\
 \label{col3} DF(K(\theta)) \bold{v}_{s}(\theta) &= \lambda_{s}(\theta) \bold{v}_{s}(\theta+\omega) \\
 \label{col4} DF(K(\theta)) \bold{v}_{u}(\theta) &= \lambda_{u}(\theta) \bold{v}_{u}(\theta+\omega) \end{align}
First of all, note that Eq. \eqref{col3}-\eqref{col4} are the definition of stable and unstable bundles $\bold{v}_{s}(\theta)$ and $\bold{v}_{u}(\theta)$ and multipliers $\lambda_{s}(\theta)$ and $\lambda_{u}(\theta)$ for the torus $K$. Hence, the third and fourth columns of $P$ satisfy Eq. \eqref{bundleEquations} if and only if they are torus stable and unstable bundles, respectively. Also, differentiating Eq. \eqref{invariance} gives
\begin{equation} \label{tangent} DF(K(\theta)) DK(\theta) = DK(\theta+\omega) \end{equation}
which shows that $\bold{v}_{t}(\theta) = DK(\theta)$ solves Eq. \eqref{col1}. As a result, the first column of $P$ can be set as the torus tangent bundle $DK(\theta)$; in fact, if Eq. \eqref{bundleEquations} has a solution, it is easy to show that column 1 of $P$ \emph{must} be $\alpha DK(\theta)$ for some $\alpha \in \mathbb{R}$.

Finally, since $F$ is a Hamiltonian flow map and hence is symplectic, given $K(\theta)$, $\bold{v}_{s}(\theta)$, and $\bold{v}_{u}(\theta)$, we can find $\bold{v}_{c}(\theta)$ solving Eq. \eqref{col2} for some function $T: \mathbb{T} \rightarrow \mathbb{R}$; we postpone the description and proof of how to compute such a $\bold{v}_{c}(\theta)$ to Section \ref{continuationSection} where the method will be used. Any such $\bold{v}_{c}(\theta)$ is known as a symplectic conjugate to $DK(\theta)$, and is a center direction to the torus $K$ \citep{Llave_2005}. Hence, column 2 of $P$ satisfies Eq. \eqref{bundleEquations} if and only if it is a symplectic conjugate center bundle. We should note that Eq. \eqref{bundleEquations} is actually underdetermined; symplectic conjugates are not unique, and we can change the scales of the stable and unstable bundles at each $\theta$; we will take advantage of this in Section \ref{constantLambda} to make $\Lambda$ constant. 

\subsection{Summary of Steps for Quasi Newton-Method for Tori and Bundles}

We will now develop our quasi-Newton method for solving Eq. \eqref{invariance} and \eqref{bundleEquations}. Before presenting the details of the method, we give a brief overview. Assume we have an approximate solution $(K,P,\Lambda)$ for Eq. \eqref{invariance}-\eqref{bundleEquations}. Then, we will
\begin{enumerate}
\item Compute $E(\theta) = F(K(\theta)) - K(\theta+\omega)$, $E_{red}(\theta) = P^{-1}(\theta+\omega)DF(K(\theta)) P(\theta) -  \Lambda(\theta)$
\item Solve $-P^{-1}(\theta+\omega)E(\theta) = \Lambda(\theta)\xi(\theta) -   \xi(\theta+\omega)$ for $\xi:\mathbb{T} \rightarrow \mathbb{R}^{4}$ using Eq. \eqref{xi1}-\eqref{xi4} and set $K(\theta)$ equal to $K(\theta) + P(\theta) \xi(\theta)$ (details given in Section \ref{Kstep}). 
\item Set column 1 of $P(\theta)$ to $DK(\theta)$. Recompute $DF(K(\theta))$ and $E_{red}(\theta)$. 
\item Solve $-E_{red}(\theta) = \Lambda(\theta) Q(\theta) - Q(\theta + \omega) \Lambda(\theta) - \Delta \Lambda(\theta)$ for $Q:\mathbb{T} \rightarrow \mathbb{R}^{4 \times 4}$ and $\Delta \Lambda$ using Eq. \eqref{E_LC}-\eqref{E_UU}. Set $P(\theta)$ equal to $P(\theta) + P(\theta) Q(\theta)$ and $\Lambda(\theta)$ equal to $\Lambda(\theta) + \Delta \Lambda(\theta)$ (details given in Section \ref{Pstep}).  
\item Return to step 1 and repeat correction until $E$ and $E_{red}$ are within tolerance. 
\end{enumerate} 

\subsection{Quasi-Newton Step for Correcting $K$} \label{Kstep}

We seek to solve Eq. \eqref{invariance} and \eqref{bundleEquations} for $K$, $P$, and $\Lambda$. All the entries of $\Lambda$ are fixed as 0 or 1 as shown in Eq. \eqref{Lambdaform} except for $T(\theta)$, $\lambda_s(\theta)$, and $\lambda_u(\theta)$. We will now derive an iterative step that, given an approximate solution $(K,P,\Lambda)$ of Eq. \eqref{invariance} and \eqref{bundleEquations}, produces a much more accurate one. Define the errors
\begin{equation} \label{Edef} E(\theta) = F(K(\theta)) - K(\theta+\omega) \end{equation}
\begin{equation} \label{Ereddef} E_{red}(\theta) = P^{-1}(\theta+\omega)DF(K(\theta)) P(\theta) -  \Lambda(\theta) \end{equation} 
We then need to find corrections $\Delta K$, $\Delta P$, and $\Delta \Lambda$ to cancel $E$ and $E_{red}$. We start with $\Delta K$; write $\Delta K(\theta) = P(\theta) \xi(\theta)$. We will solve for $\xi : \mathbb{T} \rightarrow \mathbb{R}^{4}$ satisfying
	\begin{equation} \label{xiEquation} \eta(\theta) \stackrel{\text{def}}{=} -P^{-1}(\theta+\omega)E(\theta) = \Lambda(\theta)\xi(\theta) -   \xi(\theta+\omega) \end{equation}  
\begin{claim}
For $\omega$ sufficiently irrational and $E$ and $E_{red}$ sufficiently small, if $\xi$ solves Eq. \eqref{xiEquation}, then adding $\Delta K = P \xi$ to $K$ reduces the error $E$ quadratically.
\end{claim}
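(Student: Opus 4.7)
The plan is a direct Taylor-expansion/substitution argument, tracking carefully how the bundle error $E_{red}$ and torus error $E$ combine. Let $K^{\text{new}}(\theta) = K(\theta) + P(\theta)\xi(\theta)$ and write
\begin{equation*}
E^{\text{new}}(\theta) = F\bigl(K^{\text{new}}(\theta)\bigr) - K^{\text{new}}(\theta+\omega).
\end{equation*}
First I would Taylor expand $F$ about $K(\theta)$, picking up an $O(\|P\xi\|^2)$ remainder from the $C^2$ smoothness of $F$:
\begin{equation*}
F\bigl(K(\theta)+P(\theta)\xi(\theta)\bigr) = F(K(\theta)) + DF(K(\theta))P(\theta)\,\xi(\theta) + R(\theta),
\end{equation*}
with $\|R\|\le C\|\xi\|^2$ (using that $P$ is assumed bounded).

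Next I would use the definition \eqref{Ereddef} of $E_{red}$ to substitute
\begin{equation*}
DF(K(\theta))P(\theta) = P(\theta+\omega)\bigl(\Lambda(\theta)+E_{red}(\theta)\bigr),
\end{equation*}
and the definition \eqref{Edef} of $E$ together with $\eta(\theta)=-P^{-1}(\theta+\omega)E(\theta)$ and the defining equation \eqref{xiEquation} for $\xi$. A short computation then collapses the terms:
\begin{align*}
E^{\text{new}}(\theta) &= E(\theta) + P(\theta+\omega)\bigl[\Lambda(\theta)\xi(\theta)-\xi(\theta+\omega)\bigr] + P(\theta+\omega)E_{red}(\theta)\xi(\theta) + R(\theta) \\
&= E(\theta) + P(\theta+\omega)\eta(\theta) + P(\theta+\omega)E_{red}(\theta)\xi(\theta) + R(\theta) \\
&= P(\theta+\omega)E_{red}(\theta)\xi(\theta) + R(\theta).
\end{align*}
Thus the leading linear error cancels, and I am left with two contributions: an $O(\|E_{red}\|\,\|\xi\|)$ cross term and the $O(\|\xi\|^2)$ Taylor remainder.

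To convert this into the claimed quadratic estimate, the last step is to control $\|\xi\|$ in terms of $\|E\|$. This is the main obstacle, because Eq.~\eqref{xiEquation} is a coupled system of cohomological equations whose solvability rests on the structure of $\Lambda$. For the stable and unstable components (rows $3$ and $4$) the bound is elementary: $|\lambda_s(\theta)|<1$ and $|\lambda_u(\theta)|>1$ uniformly, so the corresponding scalar equations $\lambda_s\xi_s(\theta)-\xi_s(\theta+\omega)=\eta_s(\theta)$ and similarly for $\xi_u$ are invertible by geometric-series arguments, giving $\|\xi_{s,u}\|\le C\|\eta\|$. For the center components (rows $1$ and $2$), the equations reduce to small-divisor cohomological equations of the form $\psi(\theta)-\psi(\theta+\omega)=g(\theta)$, which are solved in Fourier series and require $\omega$ to be Diophantine; under that hypothesis one obtains $\|\xi_{t,c}\|\le C\|\eta\|$ in an appropriate analytic or Sobolev norm (with an unavoidable loss of regularity absorbed by the analyticity/Diophantine setup, in the spirit of Sections of \citet{haroetal}). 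Combining gives $\|\xi\|\le C\|E\|$, hence
\begin{equation*}
\|E^{\text{new}}\| \le C_1\|E_{red}\|\,\|E\| + C_2\|E\|^2,
\end{equation*}
which is the desired quadratic reduction once $E$ and $E_{red}$ are sufficiently small. I would flag that this last step is exactly where the ``sufficiently irrational $\omega$'' hypothesis in the claim is used, and that a clean statement will require specifying the norm on $K$ and $P$ (typically an analytic norm on a strip shrinking by a fixed amount at each Newton iteration).
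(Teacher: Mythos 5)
Your proposal is correct and follows essentially the same route as the paper's proof: expand $F$ in Taylor series about $K(\theta)$, substitute $DF(K(\theta))P(\theta) = P(\theta+\omega)\bigl(\Lambda(\theta)+E_{red}(\theta)\bigr)$ from the definition of $E_{red}$, use Eq.~\eqref{xiEquation} to cancel the linear terms, and obtain $E^{\text{new}} = P(\theta+\omega)E_{red}\,\xi + \mathcal{O}(\xi^2)$. The one place you go further than the paper's in-line argument is in justifying the bound $\|\xi\|\le C\|E\|$ by splitting into hyperbolic (geometric-series) and center (small-divisor, Diophantine) components with the attendant loss of regularity; the paper simply asserts that ``$\xi$ will be similar in magnitude to $E$'' and defers the careful discussion of domain loss and tame estimates to its later remark on convergence (Eq.~\eqref{tameestimates}).
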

\begin{remark}
We use the phrase ``sufficiently irrational" when describing conditions on $\omega$ that ensure the validity of our quasi-Newton method. For those aware of KAM theory, what we mean by this is that $\omega$ is Diophantine, as most numbers are \citep{kamTutorial}. This condition is useful due to the classic small-divisors problem when solving cohomological equations (see Eq. \eqref{cohomological}). 
\end{remark}
\begin{proof}
Substitute $K(\theta)+\Delta K(\theta)$ into the RHS of Eq. \eqref{Edef}. Assuming that $\Delta K$ is small enough (true for $E$ sufficiently small and $\omega$ sufficiently irrational), we can expand Eq. \eqref{Edef} in Taylor series to get 
\begin{align} \label{deriveDeltaK} \begin{split}
E_{new}&(\theta) = F(K(\theta) + \Delta K(\theta)) - [K(\theta+\omega) + \Delta K(\theta+\omega)] \\
=&F(K(\theta)) +DF(K(\theta))\Delta K(\theta) + \mathcal O(\Delta K(\theta)^{2}) - [K(\theta+\omega) + \Delta K(\theta+\omega)] \\
=&E(\theta) +DF(K(\theta)) \Delta K(\theta) - \Delta K(\theta+\omega) + \mathcal O(\Delta K(\theta)^{2})\\ 
\end{split} \end{align}  
$\Delta K = P \xi$, and Eq.  \eqref{Ereddef} implies $DF(K(\theta)) P(\theta) = P(\theta+\omega) \left[ \Lambda(\theta) + E_{red}(\theta) \right]$. Thus,
\begin{align} \begin{split} \label{xiWithQuadratic} E_{new}&(\theta) = E(\theta) + DF(K(\theta)) P(\theta) \xi(\theta) - P(\theta+\omega)  \xi(\theta+\omega) + \mathcal O(\xi(\theta)^{2}) \\
& =E(\theta) + P(\theta+\omega) \left[ \Lambda(\theta)\xi(\theta) + E_{red}(\theta)\xi(\theta) - \xi(\theta+\omega)  \right]  + \mathcal O(\xi(\theta)^{2}) \\
&=  P(\theta+\omega) E_{red}(\theta)\xi(\theta) + \mathcal O(\xi(\theta)^{2})  \\
\end{split} \end{align} 
where the last line follows from Eq. \eqref{xiEquation}. For $\omega$ sufficiently irrational, $\xi$ will be similar in magnitude to $E$, so $E_{red}(\theta) \xi(\theta)$ will be quadratically small, comparable to $E_{red}(\theta)E(\theta)$. Hence, as long as $E$ (and hence $\xi$ and $\Delta K$) are small enough that the Taylor expansion in Eq. \eqref{deriveDeltaK} is valid, and the $\mathcal  O(\xi^{2})$ terms of the Taylor expansion are small, the new error $E_{new}$ will be quadratically smaller than $E$. \qed
\end{proof}

To solve Eq. \eqref{xiEquation}, let $\xi(\theta) = \begin{bmatrix} \xi_{1} & \xi_{2}  & \xi_{3} & \xi_{4} \end{bmatrix}^{T}$ and $\eta_(\theta) = \begin{bmatrix} \eta_{1} & \eta_{2}  & \eta_{3} & \eta_{4} \end{bmatrix}^{T}$. As $\Lambda$ is nearly diagonal, we can write Eq. \eqref{xiEquation}  component-wise as 
		\begin{gather}  \label{xi1} \eta_{1}(\theta)-T(\theta) \xi_{2}(\theta)= \xi_{1}(\theta) -   \xi_{1}(\theta+\omega) \\
	  \label{xi2} \eta_{2}(\theta) = \xi_{2}(\theta) -   \xi_{2}(\theta+\omega) \\
	  \label{xi3} \eta_{3}(\theta) = \lambda_{s}(\theta)\xi_{3}(\theta) -   \xi_{3}(\theta+\omega) \\
	  \label{xi4} \eta_{4}(\theta) = \lambda_{u}(\theta)\xi_{4}(\theta) -   \xi_{4}(\theta+\omega) 
	  \end{gather}  
\subsubsection{Fixed-Point Iteration: Solving for $\xi_{3}$ and $\xi_{4}$} \label{fixedPointIter}
To solve for $\xi_{3}$ and $\xi_{4}$, rewrite Equations \eqref{xi3} and \eqref{xi4} in the form 
\begin{equation} \label{xi3contract} \xi_{3}(\theta) = \lambda_{s}(\theta-\omega)\xi_{3}(\theta-\omega) -  \eta_{3}(\theta-\omega)  \stackrel{\text{def}}{=} [A(\xi_{3})](\theta)\end{equation}    
\begin{equation} \label{xi4contract} \xi_{4}(\theta) = \lambda_{u}^{-1}(\theta) \left[ \eta_{4}(\theta) +   \xi_{4}(\theta+\omega) \right]  \stackrel{\text{def}}{=}  [B(\xi_{4})](\theta) \end{equation}  
We define $A$ as a map from functions to functions, which sends any $f(\theta): \mathbb{T} \rightarrow \mathbb{R}$ to the new function $[A(f)](\theta) = \lambda_{s}(\theta-\omega)f(\theta-\omega) -  \eta_{3}(\theta-\omega)$; $B$ is defined similarly using Eq. \eqref{xi4contract}. To find $\xi_{3}$ and $\xi_{4}$, let $\xi_{3,0} = \xi_{4,0} = 0$ and iterate $\xi_{3,n+1} = A(\xi_{3,n})$ and $\xi_{4,n+1} = B(\xi_{4,n})$ repeatedly, starting at $n=0$. The iterations will converge to the desired solutions $\xi_{3}$ and $\xi_{4}$ of Eq. \eqref{xi3contract} and \eqref{xi4contract}. We now explain why. 
\begin{lemma}
$A$, $B$ are contraction maps, hence the iterations $\xi_{3,n+1} = A(\xi_{3,n})$, $\xi_{4,n+1} = B(\xi_{4,n})$ \hl{uniformly} converge exponentially fast as $n \rightarrow \infty$ to the solutions $\xi_{3}$ and $\xi_{4}$. 
\end{lemma}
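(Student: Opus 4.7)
The plan is to recast the iterations as fixed-point iterations for contraction maps on the Banach space $\mathcal{C}(\mathbb{T};\mathbb{R})$ of continuous $2\pi$-periodic functions equipped with the supremum norm $\|\cdot\|_\infty$, and then invoke the Banach fixed-point theorem, whose geometric error bounds will give both the uniform (i.e.\ sup-norm) and the exponential nature of the convergence in one stroke.

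First I would compute the Lipschitz constants of $A$ and $B$. Because both operators are affine, subtracting two iterates cancels the inhomogeneous $\eta$-terms and leaves
\[
[A(f)-A(g)](\theta) = \lambda_{s}(\theta-\omega)[f(\theta-\omega)-g(\theta-\omega)], \qquad [B(f)-B(g)](\theta) = \lambda_{u}^{-1}(\theta)[f(\theta+\omega)-g(\theta+\omega)].
\]
Since the rigid rotations $\theta\mapsto\theta\pm\omega$ act isometrically on $\mathcal{C}(\mathbb{T})$, taking suprema yields the clean bounds $\|A(f)-A(g)\|_\infty \le \|\lambda_{s}\|_\infty\,\|f-g\|_\infty$ and $\|B(f)-B(g)\|_\infty \le \|\lambda_{u}^{-1}\|_\infty\,\|f-g\|_\infty$. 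Notice that the troublesome small-divisor issues that arise when solving the cohomological equations for $\xi_{1}$ and $\xi_{2}$ do not appear here, precisely because $|\lambda_{s}|$ and $|\lambda_{u}|^{-1}$ are bounded away from $1$.

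The second step is to verify that these Lipschitz constants are strictly less than $1$, and this is where the hyperbolic structure set up in Section \ref{stroboscopic} enters. Because $K$ parameterizes an invariant circle inside the normally hyperbolic manifold $\Xi_{\varepsilon}$, the approximate stable and unstable multipliers satisfy $|\lambda_{s}(\theta)|<1$ and $|\lambda_{u}(\theta)|>1$ pointwise in $\theta$, and continuity on the compact circle upgrades these pointwise inequalities to strict sup-norm bounds $\rho_{s} := \|\lambda_{s}\|_\infty < 1$ and $\rho_{u} := \|\lambda_{u}^{-1}\|_\infty < 1$. The Banach fixed-point theorem then furnishes unique fixed points $\xi_{3},\xi_{4}\in\mathcal{C}(\mathbb{T})$ and, starting from $\xi_{3,0}=\xi_{4,0}=0$, the standard geometric estimates $\|\xi_{3,n}-\xi_{3}\|_\infty \le C\rho_{s}^{n}$ and $\|\xi_{4,n}-\xi_{4}\|_\infty \le C\rho_{u}^{n}$, which is exactly the claimed uniform exponential convergence.

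The main obstacle I anticipate is not the Banach contraction argument itself, which is routine, but rather the justification that the uniform bounds $\rho_{s},\rho_{u}<1$ actually hold for the \emph{approximate} multipliers produced by the quasi-Newton iteration, as opposed to the exact Floquet multipliers of the true torus. I would address this by appealing to the NHIM persistence discussion at the end of Section \ref{stroboscopic} (which supplies uniform hyperbolic bounds on a neighborhood of $\Xi$ in the extended phase space) and by requiring the current iterate to be close enough to a true invariant circle that $\|E_{red}\|_\infty$ is small; a continuity argument then transfers the strict multiplier bounds from the exact to the approximate data.
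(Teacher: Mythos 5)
Your argument is correct and takes essentially the same route as the paper: both show $A$ and $B$ are contractions on $\mathcal{C}(\mathbb{T};\mathbb{R})$ under the sup norm, using that rotations are isometries and that $\max_\theta|\lambda_s(\theta)|<1$, $\max_\theta|\lambda_u^{-1}(\theta)|<1$, and then invoke the Banach/contraction-mapping theorem. Your additional remarks on the geometric rate and on the NHIM-based justification that the approximate multipliers stay bounded away from $1$ are reasonable elaborations the paper leaves implicit, but they do not alter the core argument.
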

\begin{proof}
Note that $|\lambda_{s}(\theta)| < 1$ and $|\lambda_{u}^{-1}(\theta)| < 1$ for all $\theta \in \mathbb{T}$. Let $f_{1},f_{2} : \mathbb{T} \rightarrow \mathbb{R}$ be two continuous functions, and define $C = \max_{\theta \in \mathbb{T}} | \lambda_{s}(\theta) | < 1$. We have that
\begin{align} \begin{split}  \max_{\theta \in \mathbb{T}}  \| [A(&f_{1})](\theta) - [A(f_{2})](\theta) \| \\
&=  \max_{\theta \in \mathbb{T}} \| \lambda_{s}(\theta-\omega)f_{1}(\theta-\omega) -  \lambda_{s}(\theta-\omega)f_{2}(\theta-\omega)  \|  \\
&\leq  C \max_{\theta \in \mathbb{T}} \| f_{1}(\theta-\omega) - f_{2}(\theta-\omega)  \| = C \max_{\theta \in \mathbb{T}} \| f_{1}(\theta) - f_{2}(\theta) \| 
\end{split} \end{align} 
As $C<1$, $A$ is a contraction map under the uniform norm; the same can be shown for $B$ very similarly. The contraction mapping theorem \citep{chicone2006} tells us that every such map has a unique fixed point; furthermore, the fixed point can be found by iterating any value in the domain of the map forwards until convergence. The solutions of Equations \eqref{xi3contract} and \eqref{xi4contract} are by definition the fixed points of contraction maps $A$ and $B$. Hence, the iterations converge to $\xi_{3}$ and $\xi_{4}$. \qed
\end{proof}
\begin{remark} \label{fourierRemark1}
If $\lambda_{s}$ and $\lambda_{u}$ are constant, Fourier methods (see Section \ref{cohomsection}) can also be used to solve Eq. \eqref{xi3}-\eqref{xi4}. This is useful if $\lambda_{s}, \lambda_{u} \approx 1$. When using the quasi-Newton method for continuation, one can ensure constant $\lambda_{s}$, $\lambda_{u}$ throughout the correction by applying the procedure of Section \ref{constantLambda} to the solution $K, P, \Lambda$ used for continuation initialization, and following the instructions in Remark \ref{keepLambdaConstant} when correcting $P, \Lambda$ during each quasi-Newton step. We did not ensure constant $\lambda_{s}$, $\lambda_{u}$ in our algorithm implementation, and used fixed-point iteration instead. 
\end{remark}
\subsubsection{Cohomological Equations: Solving for $\xi_{1}$ and $\xi_{2}$} \label{cohomsection}

We next solve Eq. \eqref{xi2} for $\xi_{2}$, which is then used in the LHS of Eq. \eqref{xi1} to solve for $\xi_{1}$. In both cases, we must solve cohomological equations of form
\begin{equation} \label{cohomological} b(\theta) = a(\theta) - a(\theta+\omega) \end{equation}
where $b$ is known and $a$ is not. This can easily be solved by Fourier series; let $\hat a(k)$ and $\hat b(k)$ be the $k$th Fourier coefficients of $a$ and $b$. Then, Eq. \eqref{cohomological} becomes
\begin{equation} \sum_{k \in \mathbb{Z}} \hat b(k) e^{jk\theta} = \sum_{k \in \mathbb{Z}} \hat a(k) e^{jk\theta}  - \sum_{k \in \mathbb{Z}} \hat a(k) e^{jk(\theta+\omega)} = \sum_{k \in \mathbb{Z}} \hat a(k)(1-e^{jk\omega}) e^{jk\theta} \end{equation}
where $j=\sqrt{-1}$. Then, setting $\hat a(k) = \hat b(k) (1-e^{jk\omega})^{-1}$ allows us to compute $a(\theta)$ except for $\hat a(0)$; the formal series for $a$ thus defined will converge on $\mathbb{T}$ for $\omega$ sufficiently irrational \citep{Russ75}. Observe that a necessary condition for the existence of a solution is $\hat b(0)=0$; in the $k=0$ case, $\hat a(0)$ cancels out on the right hand side of Eq. \eqref{cohomological} and can hence take any value, making the solution $a$ non-unique. $\hat a(0)$ and $\hat b(0)$ are simply the averages of $a$ and $b$ on $\mathbb{T}$. 

We first solve Eq. \eqref{xi2} for $\hat \xi_{2} (k)$, $k \neq 0$, using the Fourier series method. To set $\hat \xi_{2} (0) $, first find the average $\alpha$ of $\eta_{1} - T \times [\xi_{2} - \hat \xi_{2} (0)]$. Then, choose $\hat \xi_{2} (0) = \alpha / \hat T(0)$; this makes the LHS of Eq. \eqref{xi1} have zero average when solving for $\xi_{1}$, since
\begin{align} \begin{split} \int_0^{2\pi} \eta_{1} - T \xi_{2} \, d\theta &= \int_0^{2\pi}  \eta_{1} - T \left[\xi_{2} - \hat \xi_{2} (0)\right] \, d\theta -  \hat \xi_{2}(0) \int_0^{2\pi}  T \, d\theta \\ 
&= \alpha - \hat \xi_{2} (0) \hat T(0) =0 \end{split} \end{align}
 With $\xi_{2}$ fully solved, we then solve Eq. \eqref{xi1} for $\hat \xi_{1} (k)$, $k \neq 0$ and arbitrarily choose $\hat \xi_{1} (0) = 0$. Finally, with all four components of $\xi$ solved, we set $K(\theta)$ equal to $K(\theta) + P(\theta) \xi(\theta)$, concluding the $K$ correction part of the quasi-Newton step. 

In practice, when solving Eq. \eqref{xi2} for $\xi_{2}$, we find that the average of $\eta_{2}(\theta)$, the left hand side of Eq. \eqref{xi2}, is not exactly zero; we ignore this nonzero average and solve for the $\hat \xi_{2} (k)$ anyways as described earlier. $\hat \eta_{2}(0)$ decreases to zero with each quasi-Newton step as the method converges, so we are able to solve Eq. \eqref{xi2} more and more exactly; this is a result of the vanishing lemma of \citet{fontichLlaveSire}, which is applicable since $F$ is an exact symplectic map due to being the fixed-time map of a Hamiltonian system on $\mathbb{R}^{4}$ \citep{gole}. Also, for those familiar with the parameterization method for invariant tori, note that the choice of $\hat \xi_{1} (0) = 0$ takes care of the translation non-uniqueness of solutions of Eq. \eqref{invariance} without requiring extra constraint equations. 

\begin{remark}
There are methods of numerically solving for $\Delta K$ without using $P$ or $\Lambda$, including single-shooting \citep{Farres2017} and collocation \citep{olikaraThesis}. These methods effectively discretize $\theta$ on a grid of $N$ points and solve a linearized version of Eq. \eqref{deriveDeltaK} directly for $\Delta K$ at those $\theta$ values. This requires solving at least a $4N$ dimensional linear system at each correction step. Gaussian elimination applied to this will hence have a computational complexity of $O(N^{3})$ and require $O(N^{2})$ storage. However, by using $P$ and the nearly diagonal $\Lambda$, we decouple the equations and avoid this large dimensional system. The complexity of our quasi-Newton method is $O(N \log N)$ (as some steps use FFT), with $O(N)$ required storage. Furthermore, our method gives not just $K$, but also the bundle and Floquet matrices $P$ and $\Lambda$. The most expensive step in our method is the computation (using numerical integration) of $F$ and $DF$ on the grid of $N$ different values of $\theta$, which is easy to parallelize on the computer. 
\end{remark}

\subsection{Quasi-Newton Step for Correcting $P$ and $\Lambda$} \label{Pstep}

Using the newly computed $K(\theta)$, we set the first column of $P(\theta)$ to $DK(\theta)$, and then recompute $DF(K(\theta))$ and $E_{red}(\theta)$ using Eq. \eqref{Ereddef}. Finding $\Delta P(\theta)$ and $\Delta \Lambda(\theta)$ to cancel $E_{red}$ then follows similar methodology as $\Delta K$. Let $\Delta P (\theta) = P(\theta) Q(\theta)$; we will solve for $Q$ and $\Delta \Lambda : \mathbb{T} \rightarrow \mathbb{R}^{4 \times 4}$ satisfying
\begin{equation}  \label{qEquation} -E_{red}(\theta) = \Lambda(\theta) Q(\theta) - Q(\theta + \omega) \Lambda(\theta) - \Delta \Lambda(\theta)\end{equation}  
\begin{claim}
For $\omega$ sufficiently irrational and $E_{red}$ sufficiently small, if $Q$ and $\Delta \Lambda$ solve Eq. \eqref{qEquation}, then adding $\Delta P = P Q$ to $P$ and $\Delta \Lambda$ to $\Lambda$ reduces $E_{red}$ quadratically.
\end{claim}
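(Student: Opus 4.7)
The plan is to mirror the proof of the analogous claim for $\Delta K$: substitute the corrected $P + \Delta P = P(I+Q)$ and $\Lambda + \Delta \Lambda$ into the definition of $E_{red}$, Taylor-expand, use Eq. \eqref{qEquation} to cancel the linear-in-$Q$ contribution, and then check that what remains is genuinely quadratic.

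Concretely, I would first write
\begin{equation*}
E_{red,new}(\theta) = (I+Q(\theta+\omega))^{-1} P^{-1}(\theta+\omega)\, DF(K(\theta))\, P(\theta)(I+Q(\theta)) - \Lambda(\theta) - \Delta\Lambda(\theta).
\end{equation*}
Applying the current reduction identity $P^{-1}(\theta+\omega) DF(K(\theta)) P(\theta) = \Lambda(\theta) + E_{red}(\theta)$ from Eq. \eqref{Ereddef} and the Neumann expansion $(I+Q(\theta+\omega))^{-1} = I - Q(\theta+\omega) + \mathcal{O}(Q^{2})$ (valid once $Q$ is small enough to guarantee invertibility), I would multiply out and collect terms by order. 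The zeroth-order piece in $Q$ and $E_{red}$ gives $\Lambda(\theta) - \Lambda(\theta) = 0$; the terms first-order in the correction are exactly
\begin{equation*}
\Lambda(\theta)Q(\theta) - Q(\theta+\omega)\Lambda(\theta) + E_{red}(\theta) - \Delta\Lambda(\theta),
\end{equation*}
which vanishes by Eq. \eqref{qEquation}.

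What survives is a sum of terms each containing two small factors: $-Q(\theta+\omega) E_{red}(\theta)$, $E_{red}(\theta) Q(\theta)$, $-Q(\theta+\omega)\Lambda(\theta) Q(\theta)$, and higher-order pieces from the Neumann expansion of the form $Q(\theta+\omega)^{k}(\Lambda(\theta)+E_{red}(\theta))(I+Q(\theta))$ for $k \geq 2$. The key quantitative step is to argue that, for $\omega$ Diophantine, the solution $Q$ produced by the procedure of Section \ref{Pstep} (to be detailed through Eq. \eqref{E_LC}-\eqref{E_UU}) satisfies $\|Q\| \lesssim \|E_{red}\|$ in an appropriate norm; once this bound is in hand, all surviving contributions are $\mathcal{O}(\|E_{red}\|^{2})$, yielding the claimed quadratic reduction.

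The main obstacle will be, as in the $\Delta K$ proof, controlling the size of $Q$ relative to $E_{red}$: solving Eq. \eqref{qEquation} column by column will require inverting cohomological and hyperbolic operators that introduce small divisors in the tangent/center block. Showing $\|Q\| \lesssim \|E_{red}\|$ therefore rests on the Diophantine hypothesis on $\omega$ together with the standard small-divisor estimates invoked in Section \ref{cohomsection} (and on the hyperbolicity $|\lambda_{s}| < 1 < |\lambda_{u}|$ in the remaining blocks); this is the only non-algebraic ingredient, and we will defer its quantitative form to the discussion surrounding Eq. \eqref{E_LC}-\eqref{E_UU}, where the block-by-block solution of Eq. \eqref{qEquation} is spelled out.
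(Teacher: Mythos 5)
Your approach is essentially the same as the paper's: substitute the corrected $P(I+Q)$ and $\Lambda+\Delta\Lambda$ into the definition of $E_{red}$, use the identity $P^{-1}(\theta+\omega)DF(K(\theta))P(\theta)=\Lambda(\theta)+E_{red}(\theta)$, invoke Eq. \eqref{qEquation} to cancel the terms linear in the correction, and bound what remains using $\|Q\|,\|\Delta\Lambda\|\lesssim\|E_{red}\|$ (Diophantine in the center block, hyperbolic contraction in the rest). The only cosmetic difference is that the paper keeps $(I+Q(\theta+\omega))^{-1}$ factored out and arrives at the exact closed form $E_{red,new}=[I+Q(\theta+\omega)]^{-1}\left[E_{red}(\theta)Q(\theta)-Q(\theta+\omega)\Delta\Lambda(\theta)\right]$, where both bracketed terms are manifestly products of two small factors, whereas you expand the inverse as a Neumann series and collect order by order — algebraically equivalent, but the paper's factoring avoids the explicit $-Q(\theta+\omega)\Lambda(\theta)Q(\theta)$ term and is marginally tidier.
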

\begin{proof}
Substitute $P+PQ$ and $\Lambda+\Delta \Lambda$ into Eq. \eqref{bundleEquations} to define
\begin{align} \begin{split} 
\mathcal{E}(\theta)=DF(K(&\theta)) [P(\theta)+P(\theta)Q(\theta)] \\
&- [P(\theta+\omega)+P(\theta+\omega)Q(\theta+\omega)] [\Lambda(\theta)+\Delta \Lambda(\theta)] \\
\end{split} \end{align}  
Using $E_{red}(\theta) = P^{-1}(\theta+\omega)DF(K(\theta)) P(\theta) -  \Lambda(\theta)$, we then find that
\begin{align} \begin{split} \label{bundleDerive}
P(\theta&+\omega)^{-1} \mathcal{E}(\theta) =E_{red}(\theta)+P(\theta+\omega)^{-1}DF(K(\theta))P(\theta)Q(\theta) \\
&\quad \quad \quad \quad \quad \quad \, \, \, \quad \quad \quad \quad \quad \quad \quad \quad - Q(\theta+\omega) [\Lambda(\theta)+\Delta \Lambda(\theta)] - \Delta \Lambda(\theta) \\
  &=E_{red}(\theta)+[\Lambda(\theta)+E_{red}(\theta) ]Q(\theta) - Q(\theta+\omega) [\Lambda(\theta)+\Delta \Lambda(\theta)] - \Delta \Lambda(\theta) \\
    &=E_{red}(\theta) Q(\theta) - Q(\theta+\omega) \Delta \Lambda(\theta) 
\end{split} \end{align}  
where the last line follows due to Eq. \eqref{qEquation}. Evaluating Eq. \eqref{Ereddef} with $P+PQ$ and $\Lambda+\Delta \Lambda$ in place of $P$ and $\Lambda$ and denoting the result as $E_{red,new}$, we have 
\begin{align} \begin{split} E_{red,new}(\theta)&=[P(\theta+\omega)+P(\theta+\omega)Q(\theta+\omega)]^{-1}\mathcal{E}(\theta) \\
&=[I+Q(\theta+\omega)]^{-1}P(\theta+\omega)^{-1}\mathcal{E}(\theta) \\
&=[I+Q(\theta+\omega)]^{-1} [E_{red}(\theta) Q(\theta) - Q(\theta+\omega) \Delta \Lambda(\theta)]  \\
\end{split} \end{align}  
Now, for $\omega$ sufficiently irrational, $Q$ and $\Delta \Lambda$ will be similar in magnitude to $E_{red}$. Hence, if $E_{red}$ is small, then $E_{red,new}$ will be quadratically smaller like $E_{red}^{2}$.  \qed
\end{proof}

Since $\Lambda$ is nearly diagonal, the equations for the different entries of $Q$ and $\Delta \Lambda$ following from Eq. \eqref{qEquation} are almost completely decoupled from each other. Write 
\begin{equation} \begin{gathered} \label{Ecomponents} E_{red}(\theta) = \begin{bmatrix}
E_{LL}(\theta) &  E_{LC}(\theta)    & E_{LS}(\theta)  & E_{LU}(\theta)  \\ E_{CL}(\theta) &  E_{CC}(\theta)    & E_{CS}(\theta)  & E_{CU}(\theta) \\ E_{SL}(\theta) & E_{SC}(\theta)    & E_{SS}(\theta)  & E_{SU}(\theta) \\ E_{UL}(\theta) &  E_{UC}(\theta)    & E_{US}(\theta)  & E_{UU}(\theta) \end{bmatrix} \\ 
Q(\theta) = \begin{bmatrix}
0&  Q_{LC}(\theta)    & Q_{LS}(\theta)  & Q_{LU}(\theta)  \\ 0 &  Q_{CC} (\theta)   & Q_{CS}(\theta)  & Q_{CU}(\theta) \\ 0 & Q_{SC} (\theta)   & Q_{SS} (\theta) & Q_{SU}(\theta) \\ 0 &  Q_{UC}(\theta)    & Q_{US}(\theta)  & Q_{UU}(\theta) \end{bmatrix} \quad \Delta \Lambda(\theta) = \begin{bmatrix}
0 &  \Delta T(\theta)   & 0 & 0 \\ 0 &  0   & 0 & 0 \\ 0 & 0  & \Delta \lambda_s(\theta) & 0 \\ 0 &  0 & 0 & \Delta \lambda_u(\theta) \end{bmatrix} \end{gathered} \end{equation}
As the first column of $P(\theta)$ is fixed to be $DK(\theta)$, we fix the first column of $Q(\theta)$ to be zero so that the first column of $\Delta P$ is zero as well. We can then write columns 2-4 of Eq. \eqref{qEquation} entry by entry to get 12 scalar equations 
\begin{gather}  \label{E_LC} -E_{LC}(\theta)-T(\theta) Q_{CC}(\theta)= Q_{LC}(\theta) - Q_{LC}(\theta+\omega) - \Delta T(\theta) \\
 \label{E_LS} -E_{LS}(\theta) -T(\theta) Q_{CS}(\theta)= Q_{LS}(\theta) - \lambda_{s}(\theta) Q_{LS}(\theta+\omega) \\
 \label{E_LU} -E_{LU}(\theta) -T(\theta) Q_{CU}(\theta)= Q_{LU}(\theta) - \lambda_{u}(\theta) Q_{LU}(\theta+\omega)  \\
 \label{E_CC} -E_{CC}(\theta) = Q_{CC}(\theta) - Q_{CC}(\theta+\omega) \\
 \label{E_CS} -E_{CS}(\theta) = Q_{CS}(\theta) - \lambda_{s}(\theta) Q_{CS}(\theta+\omega) \\
 \label{E_CU} -E_{CU}(\theta) = Q_{CU}(\theta) - \lambda_{u}(\theta) Q_{CU}(\theta+\omega)  \\
  \label{E_SC} -E_{SC}(\theta) = \lambda_{s}(\theta) Q_{SC}(\theta) - Q_{SC}(\theta+\omega) \\
 \label{E_SS} -E_{SS}(\theta) = \lambda_{s}(\theta) Q_{SS}(\theta) - \lambda_{s}(\theta) Q_{SS}(\theta+\omega) - \Delta \lambda_{s}(\theta) \\
 \label{E_SU} -E_{SU}(\theta) = \lambda_{s}(\theta) Q_{SU}(\theta) - \lambda_{u}(\theta) Q_{SU}(\theta+\omega)  \\
  \label{E_UC} -E_{UC}(\theta) = \lambda_{u}(\theta) Q_{UC}(\theta) - Q_{UC}(\theta+\omega) \\
 \label{E_US} -E_{US}(\theta) = \lambda_{u}(\theta) Q_{US}(\theta) - \lambda_{s}(\theta) Q_{US}(\theta+\omega) \\
 \label{E_UU} -E_{UU}(\theta) = \lambda_{u}(\theta) Q_{UU}(\theta) - \lambda_{u}(\theta) Q_{UU}(\theta+\omega) - \Delta \lambda_{u}(\theta)
\end{gather}  

First, we solve Equations \eqref{E_CS}, \eqref{E_CU}, \eqref{E_SC}, \eqref{E_SU},  \eqref{E_UC}, and \eqref{E_US}, followed by Eq. \eqref{E_LS} and \eqref{E_LU} (after back substitution), using the exact same method that was used to solve Eq. \eqref{xi3} and \eqref{xi4}; rearrange each equation so that its solution is the fixed point of an appropriately defined contraction map, which is then iterated to convergence. Such maps always multiply their input by either $\lambda_{s}$ or $\lambda_{u}^{-1}$, or both. Eq. \eqref{E_CC} is solved using the Fourier method of Section \ref{cohomsection}; we arbitrarily choose $\hat Q_{CC}(0) = 0$. We ignore the nonzero average of $E_{CC}$, which goes to zero with each quasi-Newton step without affecting method convergence due to $F$ being symplectic (see Appendix \ref{appendix} for the proof of this result). Finally, the solutions to Eq. \eqref{E_LC}, \eqref{E_SS}, and \eqref{E_UU} are non-unique; we choose $Q_{LC}=Q_{SS}=Q_{UU}=0$, so that we have $\Delta T(\theta)=E_{LC}(\theta)+T(\theta) Q_{CC}(\theta) $, $\Delta \lambda_{s}(\theta)=E_{SS}(\theta)$, and $\Delta \lambda_{u}(\theta)=E_{UU}(\theta)$. 

Once $Q$ and $\Delta \Lambda$ are known, we set $P(\theta)$ equal to $P(\theta) + P(\theta) Q(\theta)$, $\Lambda(\theta)$ equal to $\Lambda(\theta) + \Delta \Lambda(\theta)$, and then recompute $E(\theta)$ and $E_{red}(\theta)$ using Eq. \eqref{Edef} and \eqref{Ereddef}. Finally, we go back to the quasi-Newton step for correcting the torus parameterization $K(\theta)$ and repeat the entire method until $E$ and $E_{red}$ are within tolerance. 

\begin{remark} \label{keepLambdaConstant}
If $T$, $\lambda_{s}$, and $\lambda_{u}$ are constant, we can choose the non-unique solutions of Eq. \eqref{E_LC}, \eqref{E_SS}, and \eqref{E_UU} such that they remain constant. In particular, choose $\Delta T$, $\Delta \lambda_{s}$, and $\Delta \lambda_{u}$ as the (constant) averages of $E_{LC}$, $E_{SS}$ and $E_{UU}$, respectively, and solve for $Q_{LC}$, $Q_{SS}$, and $Q_{UU}$ using Fourier methods. Our experience was that this choice of solution negatively affected the numerical stability of our method, however; thus, we did not keep $T$, $\lambda_{s}$, and $\lambda_{u}$ constant in our implementation.
\end{remark}

\subsection{A Remark on Convergence} 

The focus of this paper is to specify the algorithms, provide details of implementation, and give practical results of the implementation in physical problems. Nevertheless, we wish to mention that there are results which rigorously prove that our algorithm converges when given initial $K,P,\Lambda$ with small enough error (depending on some condition numbers). Due to the practical focus of this paper, we will not go into detail, but we want to give a flavor of the argument. For readers interested primarily in applications, this section can be skipped. 

The convergence is due to the so-called Kolmogorov-Arnold-Moser (KAM) theory, which is a very far reaching generalization of the Newton method. In particular, we take advantage of the recent developments in a-posteriori versions of KAM theory \citep{fontichLlaveSire}, which does not require an integrable system, only approximate solutions of functional equations. We present some salient features. For any analytic function of an angle $u:\mathbb{T} \rightarrow \mathbb{C}^{n}$, define $\|u \|_\rho = \sup_{ | \operatorname{Im} z| \le \rho} |u(z) |$. It is possible to show \citep{Russ75}  that the solutions of \eqref{cohomological} satisfy  $ \|a\|_{\rho -\delta} \le C_{1} \delta^{-\tau} \|b\|_\rho$ for some $C_{1}, \tau > 0$. That is, if the right hand side is analytic in a certain complex domain, the solution is analytic in a slightly smaller complex domain, and we have estimates of the size in terms of the domain lost; note that both domains contain all real angles from 0 to $2\pi$, which is what we are actually interested in. The well known Cauchy estimates \citep{ahlfors} for derivatives of a function in a slightly smaller domain have the same form. 

The formal procedure we have given indeed reduces $E$ and $E_{red}$, to something quadratically smaller, but, performing the estimates with care, only in a slightly smaller complex domain. Denoting the invariance error and the reducibility errors after one quasi-Newton step by $ E_{new}$ and $ E_{red,new}$, we have that 
\begin{equation} \label{tameestimates} 
\| E_{new} \|_{\rho - \delta} +  \| E_{red,new} \|_{\rho -\delta}
  \le C_{2} \delta^{-2\tau-2} 
\left( \|E \|_{\rho } +  \| E_{red} \|_{\rho} \right)^2
\end{equation} 
for some $C_2 > 0$. There are standard arguments in KAM theory (\emph{``hard implicit function 
theorems''}, see \citet{kamTutorial}) which show that, given an algorithm satisfying Eq. \eqref{tameestimates} and a sufficiently small initial error, the algorithm step can be iterated infinitely many times to convergence in a domain slightly smaller than the original.  These estimates also show that the final answer is close to the initial approximation of $K,P,\Lambda$ if the initial error is small enough. 

\subsection{Modifying $P$ for Constant $\Lambda$} \label{constantLambda}

Let $K$, $P$, and $\Lambda$ be a solution to Eq. \eqref{invariance}-\eqref{bundleEquations}. For purposes of numerical stability as well as stable/unstable manifold computation (see Section \ref{parambigsection}), it can be useful to modify columns 2, 3, and 4 of $P$ in such a way that $\Lambda= P^{-1}(\theta+\omega)DF(K(\theta)) P(\theta)$ becomes a constant matrix of the form in Eq. \eqref{Lambdaform}, i.e. $T(\theta)$, $\lambda_{s}(\theta)$, and $\lambda_{u}(\theta)$ become constant. This can also enable the usage of Fourier methods instead of fixed point iteration during the quasi-Newton method (see Remarks \ref{fourierRemark1} and \ref{keepLambdaConstant}). 

For columns 3 and 4 of $P$, the stable and unstable bundles $\bold{v}_{s}(\theta)$ and $\bold{v}_{u}(\theta)$ respectively, just a simple rescaling is needed to make $\lambda_{s}(\theta)$ and $\lambda_{u}(\theta)$ constant. Let $\bar \lambda_s, \bar \lambda_u \in \mathbb{R}$ and $a_s,a_u:\mathbb{T} \rightarrow \mathbb{R}$ be the solutions to
\begin{gather} \label{scalevs} \log (\lambda_s(\theta)) - \log (\bar \lambda_s) = \log(a_s(\theta+\omega))- \log(a_s(\theta)) \\
 \label{scalevu}  \log (\lambda_u(\theta)) - \log (\bar \lambda_u) =\log(a_u(\theta+\omega))  - \log(a_u(\theta)) \end{gather}
We choose $\bar \lambda_s = \exp \left[ \frac{1}{2\pi}\int_{0}^{2\pi} \log (\lambda_s(\theta)) \, d\theta \right]$ so the LHS of Eq. \eqref{scalevs} has zero average. Letting $u(\theta) = \log(a_s(\theta))$, Eq. \eqref{scalevs} becomes a cohomological equation of form Eq. \eqref{cohomological} which can be solved for $u$ by the Fourier series method; we choose $\hat u(0)=0$. This gives $a_{s}(\theta)=e^{u(\theta)}$. We can solve Eq. \eqref{scalevu} for $a_{u}(\theta)$ in the exact same manner. Finally, one can replace columns 3 and 4 of $P$ by $a_s(\theta) \bold{v}_{s}(\theta)$ and $ a_u(\theta) \bold{v}_{u}(\theta)$, and replace $\lambda_{s}(\theta)$ and $\lambda_{u}(\theta)$ in $\Lambda$ by $\bar \lambda_s$ and $\bar \lambda_{u}$. We prove that this works now. 
\begin{lemma}
If $\bold{v}_{s}(\theta)$ and $\bold{v}_{u}(\theta)$ satisfy Eq. \eqref{col3}-\eqref{col4}, and $a_{s}(\theta)$ and $a_{u}(\theta)$ satisfy Eq. \eqref{scalevs}-\eqref{scalevu}, then $\bold{v}_{s,new}(\theta) =  a_{s}(\theta) \bold{v}_{s}(\theta) $ and $\bold{v}_{u,new}(\theta) =  a_{u}(\theta) \bold{v}_{u}(\theta) $ satisfy
\begin{equation} DF(K(\theta)) \bold{v}_{s,new}(\theta) = \bar \lambda_s \bold{v}_{s,new}(\theta+\omega) \end{equation}
\begin{equation} DF(K(\theta)) \bold{v}_{u,new}(\theta) = \bar \lambda_u \bold{v}_{u,new}(\theta+\omega) \end{equation}
\end{lemma}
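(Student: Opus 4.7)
The proof will be a direct calculation: apply $DF(K(\theta))$ to $\bold{v}_{s,new}(\theta) = a_s(\theta)\bold{v}_{s}(\theta)$, use linearity of $DF(K(\theta))$ to pull out the scalar $a_s(\theta)$, and then apply the hypothesis \eqref{col3} to replace $DF(K(\theta))\bold{v}_{s}(\theta)$ by $\lambda_s(\theta)\bold{v}_s(\theta+\omega)$. This yields
\[
DF(K(\theta))\bold{v}_{s,new}(\theta) = a_s(\theta)\lambda_s(\theta)\bold{v}_s(\theta+\omega).
\]
The target right-hand side is $\bar\lambda_s \bold{v}_{s,new}(\theta+\omega) = \bar\lambda_s a_s(\theta+\omega)\bold{v}_s(\theta+\omega)$, so the identity reduces to the scalar relation $a_s(\theta)\lambda_s(\theta) = \bar\lambda_s a_s(\theta+\omega)$.

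To obtain this scalar relation, I would exponentiate Eq. \eqref{scalevs}. Rearranging the logarithms gives $\log\lambda_s(\theta) + \log a_s(\theta) = \log\bar\lambda_s + \log a_s(\theta+\omega)$, and exponentiating yields exactly $\lambda_s(\theta)a_s(\theta) = \bar\lambda_s a_s(\theta+\omega)$, as required. One should note that taking logarithms implicitly assumes $\lambda_s(\theta), \bar\lambda_s, a_s(\theta) > 0$; this is fine because $\lambda_s$ is a contraction multiplier with $|\lambda_s(\theta)|<1$ and in our setting it stays of one sign (otherwise one must track a separate sign factor, which would be the only subtlety to address). The case of $\bold{v}_{u,new}$ is identical word-for-word with $s$ replaced by $u$ and \eqref{col4}, \eqref{scalevu} in place of \eqref{col3}, \eqref{scalevs}.

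There is no real technical obstacle here; the content of the lemma is essentially the observation that a cohomological equation written additively in logarithms becomes a conjugacy in multiplicative form, which is precisely what is needed to absorb the $\theta$-dependence of $\lambda_s(\theta)$ into a rescaling of the bundle. The slightly delicate point worth flagging in the write-up is the well-definedness of $\log\lambda_s$ and the choice of $\bar\lambda_s$ as the geometric mean so that the left-hand side of \eqref{scalevs} has zero average, which is what makes the cohomological equation solvable by the Fourier method of Section \ref{cohomsection}. Once this existence is granted, the lemma itself is a two-line verification.
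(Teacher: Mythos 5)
Your proof is correct and follows exactly the same route as the paper's: apply $DF(K(\theta))$ to $a_s(\theta)\bold{v}_s(\theta)$, invoke Eq.~\eqref{col3}, and reduce to the scalar identity $a_s(\theta)\lambda_s(\theta)=\bar\lambda_s a_s(\theta+\omega)$ obtained by exponentiating Eq.~\eqref{scalevs}. The one thing you add that the paper leaves implicit is the remark about positivity needed for the logarithms to be well-defined; that is a fair observation but not a divergence in method.
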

\begin{proof}
We prove the result for $\bold{v}_{s,new}$; the case of $\bold{v}_{u,new}$ can be proven in the exact same manner. Since $\bold{v}_{s}(\theta)$ satisfies Eq. \eqref{col3}, we have
\begin{align} \begin{split} DF(K(\theta)) \bold{v}_{s,new}(\theta) &= DF(K(\theta)) a_{s}(\theta) \bold{v}_{s}(\theta)  =a_{s}(\theta) \lambda_{s}(\theta) \bold{v}_{s}(\theta+\omega)  \\
&= a_{s}(\theta+\omega) \bar \lambda_{s} \bold{v}_{s}(\theta+\omega) = \bar \lambda_{s} \bold{v}_{s,new}(\theta+\omega)   \end{split} \end{align}
where $a_{s}(\theta) \lambda_{s}(\theta) = a_{s}(\theta+\omega) \bar \lambda_{s} $ follows from exponentiating Eq. \eqref{scalevs}. \qed
\end{proof}

We can also modify the second column of $P$, the symplectic conjugate center direction $\bold{v}_{c}(\theta)$, to make $T(\theta)$ constant. This is possible because as mentioned in Section \ref{understandP}, the symplectic conjugate is not unique; given $a:\mathbb{T} \rightarrow \mathbb{R}$ and $\bold{v}_{c}(\theta)$ satisfying Eq. \eqref{col2}, the function $\bold{v}_{c}(\theta) + a(\theta) DK(\theta)$ also solves Eq. \eqref{col2} except with a change in $T(\theta)$ (which was anyways arbitrary). Hence, we choose $a(\theta)$ which kills all variation of $T(\theta)$ about its average $\hat T(0)$. The equation for this is:
\begin{equation} \label{Tkill}-(T(\theta) - \hat T(0))  = a(\theta) - a(\theta + \omega) \end{equation}
which can be solved using the Fourier series method given for Eq. \eqref{cohomological}. Then, one simply adds $a(\theta) DK(\theta)$ to column 2 of $P$ and replaces $T(\theta)$ with $ \hat T(0)$ in $\Lambda$. Note that the LHS of Eq. \eqref{Tkill} has average zero, so a solution $a(\theta)$ can be found. 

\begin{lemma}
If $\bold{v}_{c}(\theta)$ and $a(\theta)$ satisfy Eq. \eqref{col2} and \eqref{Tkill}, respectively, then the function $\bold{v}_{c,new}(\theta) = \bold{v}_{c}(\theta) + a(\theta) DK(\theta)$ is also a symplectic conjugate and satisfies
\begin{equation} DF(K(\theta)) \bold{v}_{c,new}(\theta) = \hat T(0) DK(\theta+\omega)+\bold{v}_{c,new}(\theta+\omega) \end{equation}
\end{lemma}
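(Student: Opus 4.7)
The plan is a short direct verification. I would substitute $\bold{v}_{c,new}(\theta) = \bold{v}_{c}(\theta) + a(\theta) DK(\theta)$ into the LHS $DF(K(\theta))\bold{v}_{c,new}(\theta)$, use linearity of $DF(K(\theta))$, and then reduce both summands via the two equations that are already available in the excerpt: Eq. \eqref{col2} applied to $\bold{v}_c(\theta)$, and the tangent-bundle equation \eqref{tangent} applied to $DK(\theta)$. This immediately yields
\[
DF(K(\theta))\bold{v}_{c,new}(\theta) = [T(\theta)+a(\theta)]\, DK(\theta+\omega) + \bold{v}_c(\theta+\omega).
\]

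The second step is to rewrite the coefficient of $DK(\theta+\omega)$ using Eq. \eqref{Tkill}. Since $-(T(\theta)-\hat T(0)) = a(\theta)-a(\theta+\omega)$, we have $T(\theta)+a(\theta) = \hat T(0)+a(\theta+\omega)$. Substituting this into the expression above and regrouping $a(\theta+\omega) DK(\theta+\omega) + \bold{v}_c(\theta+\omega) = \bold{v}_{c,new}(\theta+\omega)$ gives exactly the desired identity. This also certifies that $\bold{v}_{c,new}$ satisfies an equation of the form \eqref{col2}, now with the constant function $\hat T(0)$ in place of $T(\theta)$, and therefore qualifies as a symplectic conjugate to $DK(\theta)$ in the sense used in Section \ref{understandP}.

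There is essentially no obstacle here: the only thing worth being careful about is the sign/shift bookkeeping in Eq. \eqref{Tkill}, since it is written in the form $-(T(\theta)-\hat T(0)) = a(\theta) - a(\theta+\omega)$ rather than the more obvious $\hat T(0)-T(\theta)$ form, and one must make sure that $a(\theta)$ appears with argument $\theta$ in the pushforward term but with argument $\theta+\omega$ in the translated term $\bold{v}_{c,new}(\theta+\omega)$. It is also worth remarking that the solvability of Eq. \eqref{Tkill} by the Fourier method of Section \ref{cohomsection} is already guaranteed since the right-hand side $-(T(\theta)-\hat T(0))$ has zero mean by construction, so no further hypothesis on $a$ is needed beyond what was stated before the lemma.
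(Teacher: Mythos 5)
Your proof is correct and is essentially the same calculation the paper carries out: expand by linearity, apply Eq.~\eqref{col2} and Eq.~\eqref{tangent} to the two summands, rewrite the coefficient via the identity $T(\theta)+a(\theta)=\hat T(0)+a(\theta+\omega)$ obtained from Eq.~\eqref{Tkill}, and regroup. Nothing to add.
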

\begin{proof}
Since $\bold{v}_{c}(\theta)$ satisfies Eq. \eqref{col2} and $DK(\theta)$ satisfies Eq. \eqref{tangent}, we have
\begin{align} \begin{split} DF(K(\theta)) \bold{v}_{c,new}(\theta) &= DF(K(\theta)) \left[ \bold{v}_{c}(\theta) + a(\theta) DK(\theta) \right] \\
&= \left[T(\theta)+a(\theta) \right] DK(\theta+\omega)+\bold{v}_{c}(\theta+\omega) \\
&= \left[\hat T(0)+a(\theta+\omega) \right] DK(\theta+\omega)+\bold{v}_{c}(\theta+\omega) \\
&= \hat T(0) DK(\theta+\omega)+\bold{v}_{c,new}(\theta+\omega) \end{split} \end{align}
where the relation $T(\theta)+a(\theta) = \hat T(0)+a(\theta+\omega)$ follows from Eq. \eqref{Tkill}. \qed
\end{proof}

\subsection{Initialization for Continuation by $\varepsilon$} \label{continuationSection}

To compute invariant circles and bundles of stroboscopic maps in periodically-perturbed PCRTBP models with some desired perturbation parameter $\varepsilon_{f} > 0$, we start from periodic orbits and their bundles in the unperturbed PCRTBP ($\varepsilon = 0$) and continue by $\varepsilon$ until the torus and bundles for $\varepsilon = \varepsilon_{f}$ are found. Our quasi-Newton method-based continuation follows the standard procedure; choose a number of continuation steps $n$, take an invariant circle and bundles from the $\varepsilon = 0$ system, and use them as an initial guess for the quasi-Newton method to solve for the circle and bundles in the $\varepsilon = \varepsilon_{f}/n$ system. Similarly, for $i = 0, \dots, n-1$, use the solution from the $\varepsilon_{f} i/n$ system as an initial guess for the solution in the $\varepsilon_{f} (i+1)/ n$ system. Once $i=n-1$, we have the torus and bundles for $\varepsilon = \varepsilon_{f}$. We need to find the $\varepsilon = 0$ solution to initialize the continuation, however.

To get $K(\theta)$, $P(\theta)$, and $\Lambda(\theta)$ solving Eq. \eqref{invariance} and \eqref{bundleEquations} for the $\varepsilon = 0$ PCRTBP case,  one needs to first choose a periodic orbit which is to be continued (recall that PCRTBP periodic orbits are also invariant circles of the stroboscopic map  $F=F_{\varepsilon = 0}$, unless the orbit period is resonant with $2\pi/\Omega_{p}$). From this periodic orbit, we get its period $T_{1}$ and hence the rotation number $\omega=4\pi^{2}/(T_{1}\Omega_{p})$, as well as a point $\bold{x}_{0}$ lying on the orbit. Let $\phi(\bold{x}, t)$ denote the time-$t$ map of the point $\bold{x} \in \mathbb{R}^{4}$ by the PCRTBP equations of motion. Then, we can take $K(\theta) = \phi(\bold{x}_{0}, T_{1} \frac{\theta}{2\pi})$ if the periodic orbit monodromy matrix stable and unstable eigenvalues are positive. If they are negative, though, the ``double covering" trick of \citet{haroLlave2007} needs be used so that $P(\theta)$ can be continuously defined (as the stable/unstable bundles are M\"obius strips in this case). For this, set $K(\theta) = \phi(\bold{x}_{0}, 2T_{1} \frac{\theta}{2\pi})$ and $\omega=2\pi^{2}/(T_{1}\Omega_{p})$ so that $K$ sweeps over the periodic orbit twice as $\theta$ goes from 0 to $2\pi$. In either case, it is easy to verify that $K(\theta)$ satisfies Eq. \eqref{invariance}. 

Next, set the first column of $P(\theta)$ to be $DK(\theta)$, and set the third and fourth columns of $P(\theta)$ as the stable and unstable unit eigenvectors of the periodic orbit monodromy matrix at the point $K(\theta)$. Denote these stable and unstable eigenvectors as $\bold{v}_{s}(\theta)$ and $\bold{v}_{u}(\theta)$, respectively. One needs to make sure that the directions of $\bold{v}_{s}(\theta)$ and $\bold{v}_{u}(\theta)$ at each point $K(\theta)$ are chosen such that they are continuously oriented functions of $\theta$; this is always possible if $K$ is defined as previously described. Finally, finding the second column of $P$ requires some extra calculations. 

As mentioned in Section \ref{understandP}, the second column of $P$ represents the symplectic conjugate direction to $DK(\theta)$ and is part of the center bundle. The first step in its computation is to compute $\lambda_{s}(\theta)$ and $\lambda_{u}(\theta) : \mathbb{T} \rightarrow \mathbb{R}$ such that 
 \begin{equation} \label{stablelambda} DF(K(\theta)) \bold{v}_{s}(\theta) = \lambda_{s}(\theta) \bold{v}_{s}(\theta+\omega) \end{equation}
\begin{equation} \label{unstablelambda} DF(K(\theta)) \bold{v}_{u}(\theta) = \lambda_{u}(\theta) \bold{v}_{u}(\theta+\omega) \end{equation}
which can be done since $DF(K(\theta))$ maps the stable and unstable bundles into themselves. Next, find functions $A(\theta), B(\theta), C(\theta)$, and $D(\theta) : \mathbb{T} \rightarrow \mathbb{R}$ such that
\begin{align} \label{abcd} \begin{split} DF(K(\theta)) \frac{J^{-1} DK(\theta)}{ \|DK(\theta)\|^{2}} = A(\theta) &DK(\theta+\omega) + B(\theta) \frac{J^{-1} DK(\theta+\omega)}{ \|DK(\theta+\omega)\|^{2}} \\ 
&+ C(\theta) \bold{v}_{s}(\theta+\omega) + D(\theta) \bold{v}_{u}(\theta+\omega) 
\end{split} \end{align} 
where $  J= \begin{bmatrix}
0_{2 \times 2}   & I_{2 \times 2}   \\ -I_{2 \times 2}  &  0_{2 \times 2} \end{bmatrix} $ is the $4 \times 4$ matrix of the symplectic form in the usual Euclidean metric on $\mathbb{R}^{4}$. All the quantities in \eqref{abcd} are known except for $A,B,C,$ and $D$. We can therefore consider Eq. \eqref{abcd} as a system of linear equations for $A, B, C,$ and $D$ which can be solved. One will find that $B(\theta) = 1$; this occurs as a result of symplectic geometric considerations (see Eq. \eqref{BisOne}). After this, we solve for functions $f_{1}(\theta), f_{2}(\theta) : \mathbb{T} \rightarrow \mathbb{R}$ such that
\begin{equation} \label{f1} C(\theta) =  f_{1}(\theta+\omega) - \lambda_{s}(\theta)f_{1}(\theta)  \end{equation}
\begin{equation} \label{f2} D(\theta) =  f_{2}(\theta+\omega) - \lambda_{u}(\theta)f_{2}(\theta)  \end{equation}  
which can be done using the same contraction map iteration method used to solve Equations \eqref{xi3} and \eqref{xi4} in Section \ref{fixedPointIter}. Finally, we can express the second column of $P(\theta)$, the symplectic conjugate direction $\bold{v}_{c}(\theta)$, as 
\begin{equation} \label{sympconj} \bold{v}_{c}(\theta) = \frac{J^{-1} DK(\theta)}{ \|DK(\theta)\|^{2}} + f_{1}(\theta) \bold{v}_{s}(\theta)  + f_{2}(\theta) \bold{v}_{u}(\theta) \end{equation}
With $P(\theta)$ known, to find $\Lambda$ one can simply use $\Lambda(\theta) = P^{-1}(\theta+\omega)DF(K(\theta)) P(\theta) $, after which we can start the continuation. As long as the previous steps were followed correctly, $\Lambda$ will be of the form given in Eq. \eqref{Lambdaform}. To see this, recall the discussion in Section \ref{understandP}, and note that the first, third, and fourth columns of $P$ satisfy Equations \eqref{col1}, \eqref{col3}, and \eqref{col4}. Hence, we just need to show that the second column of $P$ satisfies Eq. \eqref{col2}. We prove this now. 
\begin{lemma} \label{sympconjLemma}
For some $T:\mathbb{T} \rightarrow \mathbb{R}$. the function $\bold{v}_{c}(\theta)$ defined in Eq. \eqref{sympconj} satisfies  \begin{equation}  DF(K(\theta)) \bold{v}_{c}(\theta) = T(\theta) DK(\theta+\omega) + \bold{v}_{c}(\theta+\omega)  \end{equation}
\end{lemma}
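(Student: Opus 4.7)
The plan is straightforward algebraic substitution. First I would expand $DF(K(\theta))\bold{v}_{c}(\theta)$ by linearity, using the definition of $\bold{v}_{c}(\theta)$ from Eq.~\eqref{sympconj} as a linear combination of three pieces. For the $J^{-1}DK(\theta)/\|DK(\theta)\|^{2}$ piece, apply Eq.~\eqref{abcd} directly, which already decomposes its image as a sum involving $DK(\theta+\omega)$, $J^{-1}DK(\theta+\omega)/\|DK(\theta+\omega)\|^{2}$, $\bold{v}_{s}(\theta+\omega)$, and $\bold{v}_{u}(\theta+\omega)$ with coefficients $A(\theta)$, $B(\theta)$, $C(\theta)$, $D(\theta)$. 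For the $f_{1}(\theta)\bold{v}_{s}(\theta)$ piece, use Eq.~\eqref{stablelambda} to obtain $f_{1}(\theta)\lambda_{s}(\theta)\bold{v}_{s}(\theta+\omega)$, and similarly Eq.~\eqref{unstablelambda} turns the third piece into $f_{2}(\theta)\lambda_{u}(\theta)\bold{v}_{u}(\theta+\omega)$.

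Next, I would gather coefficients of the four vectors $DK(\theta+\omega)$, $J^{-1}DK(\theta+\omega)/\|DK(\theta+\omega)\|^{2}$, $\bold{v}_{s}(\theta+\omega)$, and $\bold{v}_{u}(\theta+\omega)$. The coefficient of $\bold{v}_{s}(\theta+\omega)$ is $C(\theta)+\lambda_{s}(\theta)f_{1}(\theta)$, which by the defining relation Eq.~\eqref{f1} equals $f_{1}(\theta+\omega)$; symmetrically, the coefficient of $\bold{v}_{u}(\theta+\omega)$ is $D(\theta)+\lambda_{u}(\theta)f_{2}(\theta)=f_{2}(\theta+\omega)$ by Eq.~\eqref{f2}. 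Using the stated identity $B(\theta)=1$, the coefficient of $J^{-1}DK(\theta+\omega)/\|DK(\theta+\omega)\|^{2}$ is unity. Reassembling the three terms at the shifted angle $\theta+\omega$ gives exactly $\bold{v}_{c}(\theta+\omega)$ (by Eq.~\eqref{sympconj} applied at $\theta+\omega$), and the remaining piece is $A(\theta)DK(\theta+\omega)$. Setting $T(\theta)=A(\theta)$ yields the claim.

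In truth the proof is little more than bookkeeping; the substantive work has already been done in constructing $f_{1},f_{2}$ from the cohomological-type equations \eqref{f1}--\eqref{f2} via the contraction iteration of Section~\ref{fixedPointIter}, and in setting up Eq.~\eqref{abcd} as a solvable linear system for $A,B,C,D$. The only ingredient that is not a direct application of linearity is the identity $B(\theta)=1$, which the paper attributes to the symplecticity of $DF(K(\theta))$ and handles by a separate reference; I would simply cite it here. Conceptually this is the genuine obstacle, since it is precisely what guarantees that $\bold{v}_{c}(\theta)$ is a \emph{symplectic} conjugate of $DK(\theta)$ rather than an arbitrary solution of Eq.~\eqref{col2}, but within the proof of this lemma it enters merely as an imported fact.
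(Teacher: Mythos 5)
Your algebraic skeleton is correct and matches the paper's computation up through Eq.~\eqref{DFtimesvc}: expand by linearity, apply Eq.~\eqref{abcd} and Eq.~\eqref{stablelambda}--\eqref{unstablelambda}, collect coefficients, and use Eq.~\eqref{f1}--\eqref{f2} to absorb $C+\lambda_s f_1$ and $D+\lambda_u f_2$ into $f_1(\theta+\omega)$ and $f_2(\theta+\omega)$. That part is indeed bookkeeping.

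The gap is in how you dispose of $B(\theta)=1$. You propose to ``simply cite it,'' on the grounds that the paper ``handles it by a separate reference.'' But that reference --- ``(see Eq.~\eqref{BisOne})'' --- points \emph{forward into the proof of this very lemma}; Eq.~\eqref{BisOne} is a display labelled inside the proof you are being asked to write, not an external result. There is nowhere else in the paper to cite it from. Establishing $B(\theta)=1$ is in fact the substance of the lemma, and it takes real work: one must first show $\Omega\bigl(DK(\theta),\bold{v}_s(\theta)\bigr)=0$ and $\Omega\bigl(DK(\theta),\bold{v}_u(\theta)\bigr)=0$, which follows not from symplecticity alone but from a contraction argument (the symplectic pairing of an invariant neutral direction with a uniformly contracted or expanded direction must vanish --- see Eq.~\eqref{deriveZero}); then one computes directly from Eq.~\eqref{sympconj} that $\Omega\bigl(DK(\theta),\bold{v}_c(\theta)\bigr)=1$ (Eq.~\eqref{areaOne}); only then does applying the symplecticity of $DF$ to the decomposition Eq.~\eqref{DFtimesvc} force $B(\theta)=1$ (Eq.~\eqref{BisOne}). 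You correctly identify this as ``the genuine obstacle,'' but labelling it an ``imported fact'' and stopping is precisely where your proof is incomplete --- roughly half of the paper's argument is the derivation you have skipped.
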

\begin{proof}
Applying Eq. \eqref{sympconj} and then Equations \eqref{stablelambda}, \eqref{unstablelambda}, and \eqref{abcd}, we have 
\begin{align} \begin{split} DF(K(\theta)) \bold{v}_{c}(\theta) &= DF(K(\theta)) \left( \frac{J^{-1} DK(\theta)}{ \|DK(\theta)\|^{2}} + f_{1}(\theta) \bold{v}_{s}(\theta)  + f_{2}(\theta) \bold{v}_{u}(\theta) \right) \\
= &A(\theta) DK(\theta+\omega) + B(\theta) \frac{J^{-1} DK(\theta+\omega)}{ \|DK(\theta+\omega)\|^{2}} \\ 
&+ \left(C(\theta) + \lambda_{s}(\theta)f_{1}(\theta) \right) \bold{v}_{s}(\theta+\omega) + \left( D(\theta) + \lambda_{u}(\theta)f_{2}(\theta) \right)\bold{v}_{u}(\theta+\omega) 
\end{split} \end{align} 
Recalling Equations \eqref{f1} and \eqref{f2}, we thus have that
\begin{align} \begin{split} \label{DFtimesvc} DF(K(\theta)) \bold{v}_{c}(\theta) = &A(\theta) DK(\theta+\omega) + B(\theta) \frac{J^{-1} DK(\theta+\omega)}{ \|DK(\theta+\omega)\|^{2}} \\ 
&+ f_{1}(\theta+\omega)  \bold{v}_{s}(\theta+\omega) + f_{2}(\theta+\omega) \bold{v}_{u}(\theta+\omega) 
\end{split} \end{align} 
Flow maps of Hamiltonian systems are symplectic \citep{thirring}. Hence, $F$ satisfies $\Omega(\bold{v}_{1}, \bold{v}_{2}) = \Omega(DF(K(\theta)) \bold{v}_{1}, DF(K(\theta)) \bold{v}_{2})$ for all $\bold{v}_{1}$, $\bold{v}_{2} \in \mathbb{R}^{4}$, where $\Omega$ is the bilinear symplectic form defined on Euclidean $\mathbb{R}^{4}$ as $ \Omega(\bold{v}_{1}, \bold{v}_{2}) =  \bold{v}_{1}^{T} J \bold{v}_{2} $. It is easy to see that $\Omega(\bold{v}_{1}, \bold{v}_{1}) = 0$ for any $\bold{v}_{1} \in \mathbb{R}^{4}$. Furthermore, defining $L = \max_{\theta \in \mathbb{T}} | \lambda_{s}(\theta) | < 1$ and recalling equations \eqref{tangent} and \eqref{stablelambda}, we have that
\begin{align} \begin{split} \label{deriveZero} \max_{\theta \in \mathbb{T}} |\Omega(DK&(\theta),\bold{v}_{s}(\theta)) |= \max_{\theta \in \mathbb{T}} \left| \Omega \left(DF(K(\theta)) DK(\theta), DF(K(\theta) ) \bold{v}_{s}(\theta) \right) \right| \\
&=\max_{\theta \in \mathbb{T}}  \left| \Omega \left(DK(\theta+\omega), \lambda_s(\theta) \bold{v}_{s}(\theta+\omega) \right) \right| \\
&=\max_{\theta \in \mathbb{T}}  |\lambda_s(\theta)|  \left| \Omega \left(DK(\theta+\omega), \bold{v}_{s}(\theta+\omega) \right) \right| \\
& \leq L \max_{\theta \in \mathbb{T}} \left| \Omega \left(DK(\theta+\omega), \bold{v}_{s}(\theta+\omega) \right) \right| = L \max_{\theta \in \mathbb{T}} \left| \Omega \left(DK(\theta), \bold{v}_{s}(\theta) \right) \right| \\
\end{split} \end{align} 
which implies that $\max_{\theta \in \mathbb{T}} \left| \Omega \left(DK(\theta), \bold{v}_{s}(\theta) \right) \right| = 0$ since $0 < L < 1$. Thus, for all $\theta \in \mathbb{T}$, $\Omega \left(DK(\theta), \bold{v}_{s}(\theta) \right) =0$ . We can also show that $\Omega \left(DK(\theta), \bold{v}_{u}(\theta) \right) = 0$ in a very similar manner to Eq. \eqref{deriveZero}. Hence, using Eq. \eqref{sympconj} for $\bold{v}_{c}$, we find 
\begin{align} \label{areaOne} \begin{split} \Omega(DK(\theta),\bold{v}_{c}(\theta)) &= \Omega \left(DK(\theta),\frac{J^{-1} DK(\theta)}{ \|DK(\theta)\|^{2}} + f_{1}(\theta) \bold{v}_{s}(\theta)  + f_{2}(\theta) \bold{v}_{u}(\theta) \right) \\
&= \Omega \left(DK(\theta),\frac{J^{-1} DK(\theta)}{ \|DK(\theta)\|^{2}} \right) \\
&= DK(\theta)^{T}J \frac{J^{-1} DK(\theta)}{ \|DK(\theta)\|^{2}}  = \frac{ DK(\theta)^{T} DK(\theta)}{ \|DK(\theta)\|^{2}} = 1
\end{split} \end{align} 
Since $F$ is a symplectic map, using Eq. \eqref{DFtimesvc} we have that 
\begin{align} \begin{split} \label{BisOne} 1 = \Omega &(DK(\theta),\bold{v}_{c}(\theta)) \\
= \Omega &\left(DF(K(\theta)) DK(\theta),DF(K(\theta)) \bold{v}_{c}(\theta) \right) \\
= \Omega &\left(DK(\theta+\omega),A(\theta) DK(\theta+\omega) + B(\theta) \frac{J^{-1} DK(\theta+\omega)}{ \|DK(\theta+\omega)\|^{2}} \right. \\
& \quad \quad \quad \quad \quad \quad \left. + f_{1}(\theta+\omega)  \bold{v}_{s}(\theta+\omega) + f_{2}(\theta+\omega) \bold{v}_{u}(\theta+\omega) \vphantom{\frac{J^{-1} DK(\theta+\omega)}{ \|DK(\theta+\omega)\|^{2}}} \right) \\
= \Omega &\left(DK(\theta+\omega), B(\theta) \frac{J^{-1} DK(\theta+\omega)}{ \|DK(\theta+\omega)\|^{2}} \right) \\
=B(&\theta) DK(\theta+\omega)^{T} J  \frac{J^{-1} DK(\theta+\omega)}{ \|DK(\theta+\omega)\|^{2}} = B(\theta)
\end{split} \end{align} 
proving that $B(\theta) =1$. Therefore, substituting this into Eq. \eqref{DFtimesvc} gives
\begin{align} \begin{split} \label{subInto} DF(K(\theta)) \bold{v}_{c}(\theta) = &A(\theta) DK(\theta+\omega) +  \frac{J^{-1} DK(\theta+\omega)}{ \|DK(\theta+\omega)\|^{2}} \\ 
&+ f_{1}(\theta+\omega)  \bold{v}_{s}(\theta+\omega) + f_{2}(\theta+\omega) \bold{v}_{u}(\theta+\omega) 
\end{split} \end{align} 
Finally, we see from Eq. \eqref{sympconj} that the last 3 terms on the RHS of Eq. \eqref{subInto} are precisely $\bold{v}_{c}(\theta+\omega)$. Letting $T(\theta) = A(\theta)$, we hence conclude that 
\begin{equation}  DF(K(\theta)) \bold{v}_{c}(\theta) = T(\theta) DK(\theta+\omega) + \bold{v}_{c}(\theta+\omega) \end{equation} 
which is what we sought to prove. \qed
\end{proof}

\subsection{Computing Families of Tori: Continuation by $\omega$} \label{continuationOmegaSection}

The continuation by $\varepsilon$ described in Section \ref{continuationSection} is carried out with $\omega$ fixed. However, in the PCRTBP, periodic orbits occur in one-parameter families, with varying rotation numbers $\omega$ under $F_{\varepsilon=0}$. The same is true of invariant circles of $F_{\varepsilon}$ for $\varepsilon=\varepsilon_{f} > 0$ as well. To compute the family of $F_{\varepsilon_{f}}$-invariant tori corresponding to a PCRTBP periodic orbit family, one option is to continue several different periodic orbits from that family (corresponding to different $\omega$ values) by $\varepsilon$. However, this is inefficient, as the tori and bundles computed for $\varepsilon < \varepsilon_{f}$ are not of interest. Instead, it is better to first compute just one invariant circle of $F_{\varepsilon_{f}}$, along with its bundles, at some rotation number $\omega = \omega_{0}$ using continuation by $\varepsilon$. After this, one can continue the $\omega_{0}$ circle/bundles by $\omega$, with $\varepsilon = \varepsilon_{f}$ fixed. The continuation by $\omega$ is quite similar to the continuation by $\varepsilon$; given a known exact solution to Eq. \eqref{invariance} and \eqref{bundleEquations} for $\omega= \omega_{i}$, $i \in \mathbb{Z}$, one uses this to form an initial guess for the quasi-Newton method to compute the torus/bundles for $\omega = \omega_{i+1}= \omega_{i}+\Delta \omega_{i}$. This recursively gives us tori for a range of $\omega$ values. The $\Delta \omega_{i}$ are called the continuation step sizes. 

One can use the torus and bundles for $\omega= \omega_{i}$ directly as an initial guess for $\omega = \omega_{i}+\Delta \omega_{i}$. However, it is extremely easy to use $K$, $P$, and $\Lambda$ from $\omega_{i}$ to compute a better initial guess for the $\omega_{i}+\Delta \omega_{i}$ torus, which aids in quasi-Newton method convergence. Assume that $\Lambda$ has constant $T(\theta) = T$ (apply the procedure from Section \ref{constantLambda} to $P$ and $\Lambda$ if necessary). Then, using $\bold{v}_{c}(\theta)$ to denote column 2 of $P$, the initial guess for the $\omega_{i}+\Delta \omega_{i}$ torus parameterization should be $K_{new}(\theta) = K(\theta) + (\Delta \omega_{i}/T) \bold{v}_{c}(\theta)$. We justify this now. 

\begin{claim}
If $K$, $P$, and $\Lambda$ (with $\Lambda$ constant) solve Eq. \eqref{invariance}-\eqref{bundleEquations} for $\omega = \omega_{i}$, then $K_{new}(\theta) =K(\theta) +  (\Delta \omega_{i}/T) \bold{v}_{c}(\theta)$ solves Eq. \eqref{invariance} for $\omega = \omega_{i}+\Delta \omega_{i}$ up to $ \mathcal O(\Delta \omega_{i}^{2})$. 
\end{claim}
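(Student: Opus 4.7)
The plan is a direct two-sided Taylor expansion in $\Delta \omega_i$, exploiting the explicit form of the action of $DF(K(\theta))$ on the symplectic conjugate $\bold{v}_c(\theta)$ given by Eq.~\eqref{col2}. Since $\Lambda$ is assumed constant, the coefficient $T(\theta)=T$ is a scalar independent of $\theta$, so all derivatives of $T$ vanish; this is what makes the correction $K_{new} = K + (\Delta\omega_i/T)\bold{v}_c$ work at first order.

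First, I would expand the left-hand side $F(K_{new}(\theta))$ around $K(\theta)$, writing
\begin{equation*}
F(K_{new}(\theta)) = F(K(\theta)) + \tfrac{\Delta \omega_i}{T}\, DF(K(\theta))\bold{v}_c(\theta) + \mathcal{O}(\Delta \omega_i^2).
\end{equation*}
Then I substitute Eq.~\eqref{invariance} for $\omega=\omega_i$ to replace $F(K(\theta))$ by $K(\theta+\omega_i)$, and Eq.~\eqref{col2} (with constant $T$) to rewrite $DF(K(\theta))\bold{v}_c(\theta) = T\, DK(\theta+\omega_i) + \bold{v}_c(\theta+\omega_i)$. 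The $T$ cancels against $1/T$, yielding
\begin{equation*}
F(K_{new}(\theta)) = K(\theta+\omega_i) + \Delta \omega_i \, DK(\theta+\omega_i) + \tfrac{\Delta\omega_i}{T}\bold{v}_c(\theta+\omega_i) + \mathcal{O}(\Delta \omega_i^2).
\end{equation*}

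Next, I would expand the right-hand side $K_{new}(\theta + \omega_i + \Delta \omega_i)$ about the shifted argument $\theta+\omega_i$, using a Taylor expansion of both $K$ and $\bold{v}_c$ in their single argument:
\begin{equation*}
K_{new}(\theta+\omega_i+\Delta\omega_i) = K(\theta+\omega_i) + \Delta \omega_i\, DK(\theta+\omega_i) + \tfrac{\Delta\omega_i}{T}\bold{v}_c(\theta+\omega_i) + \mathcal{O}(\Delta\omega_i^2),
\end{equation*}
where the $(\Delta\omega_i/T)\cdot D\bold{v}_c(\theta+\omega_i)\,\Delta\omega_i$ cross-term is already $\mathcal{O}(\Delta\omega_i^2)$ and gets absorbed into the remainder. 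Comparing the two displays shows the two sides of the invariance equation \eqref{invariance} for rotation number $\omega_i+\Delta\omega_i$ agree through first order in $\Delta\omega_i$, which is the claim.

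There is no real obstacle here; the only subtlety is ensuring that constancy of $T$ is actually used, because otherwise the inhomogeneous term in Eq.~\eqref{col2} would pick up a $\theta$-dependent factor whose variation would spoil the cancellation at first order and force an extra cohomological correction (this is precisely why Section~\ref{constantLambda} is invoked beforehand). I would close the proof by noting that the same expansion makes clear that the initial guess is consistent with continuation step-size control, since the error scales cleanly as $\Delta\omega_i^2$ and therefore stays in the quadratic basin of the quasi-Newton method provided $\Delta \omega_i$ is taken small enough.
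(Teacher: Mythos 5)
Your proof is correct and follows essentially the same route as the paper's: a first-order Taylor expansion of both $F(K_{new}(\theta))$ and $K_{new}(\theta+\omega_i+\Delta\omega_i)$, combined with the invariance equation \eqref{invariance} and the center-bundle relation \eqref{col2} (with constant $T$) to show the first-order terms cancel. The only cosmetic difference is that you simplify each side separately before comparing, while the paper computes the difference directly; the underlying argument is identical.
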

\begin{proof}
For notational convenience, write $\omega$ and $\Delta \omega$ in place of $\omega_{i}$ and $\Delta \omega_{i}$, respectively. Then, evaluating $F(K_{new}(\theta))-K_{new}(\theta+\omega+\Delta \omega)$, we find this equals 
\begin{align}  \begin{split}
F&\bigg(K(\theta) + \frac{\Delta \omega}{T}  \bold{v}_{c}(\theta)\bigg) - \left[K(\theta+\omega+\Delta \omega) + \frac{\Delta \omega}{T}  \bold{v}_{c}(\theta+\omega+\Delta \omega)\right] \\
&\begin{aligned} 
=F(K(\theta)) +&DF(K(\theta)) \frac{\Delta \omega}{T}  \bold{v}_{c}(\theta) + \mathcal O(\Delta \omega^{2}) \\ 
-& \left[K(\theta+\omega) + \Delta \omega DK(\theta+\omega) + \frac{\Delta \omega}{T}  \bold{v}_{c}(\theta+\omega) + \mathcal  O(\Delta \omega^{2}) \right]  
\end{aligned} \\
&\begin{aligned} 
= \frac{\Delta \omega}{T} \left[   DF(K(\theta)) \bold{v}_{c}(\theta) - T \, DK(\theta+\omega) - \bold{v}_{c}(\theta+\omega) \right] + \mathcal O(\Delta \omega^{2}) = \mathcal O(\Delta \omega^{2})
\end{aligned} \\
\end{split} \end{align}  
where the last equality follows from Eq. \eqref{col2}. \qed
\end{proof}
\begin{remark}
Using the Poincar\'e-Lindstedt method, it is possible to get higher order expansions in $\Delta \omega_{i}$ for $K_{new}$ than the linear approximation $K(\theta) +  (\Delta \omega_{i}/T) \bold{v}_{c}(\theta)$. This requires significant extra computations which we decided not to carry out. 
\end{remark}

There is one more difference between continuation by $\omega$ and continuation by $\varepsilon$. The continuation by $\varepsilon$ uses a fixed step size $\varepsilon_{f}/n$. However, for continuation by $\omega$, the step size $\Delta \omega_{i}$ must be varied due to quasi-Newton method divergence for insufficiently irrational $\omega$. At such $\omega$ values, the PCRTBP invariant circle breaks down after the perturbation $\varepsilon=\varepsilon_{f}$, leading to a gap between tori at smaller and larger rotation numbers. Our continuation needs to ``jump" over this gap. Suppose we have a torus and bundles for $\omega=\omega_{i}$, and let $\varphi_{i}$ denote the largest of $\Delta \omega_{i-1}, \Delta \omega_{i-2}, \dots, \Delta \omega_{i-5}$. It is natural to try $\Delta \omega_{i}=\varphi_{i}$. If the quasi-Newton method diverges for $\omega=\omega_{i}+\varphi_{i}$, however, then instead one can try $\Delta \omega_{i}=\varphi_{i}/2$; if this still does not work, try $\Delta \omega_{i}=\varphi_{i}/2^{2}$, and so on until we find a $\Delta \omega_i$ that works. Once we have the circle/bundles for $\omega_{i+1}=\omega_{i}+\Delta \omega_{i}$, we repeat the process. 

\begin{figure}
\begin{centering}
\includegraphics[width=0.5\columnwidth]{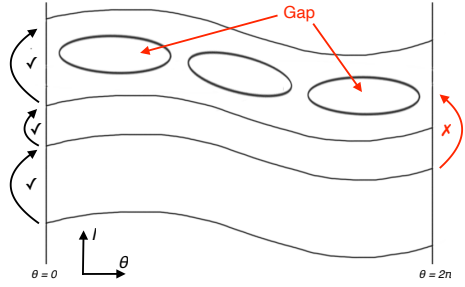}
\caption{\label{fig:gap} Schematic of crossing gaps during $\omega$ continuation (consider the $\theta=0$ and $\theta=2\pi$ lines to be glued together to form a cylinder)} 
\end{centering}
\end{figure}
In this procedure, it is possible for $\Delta \omega_{i+1}$ to be larger than $\Delta \omega_{i}$, which is what allows us to ``jump" over gaps in the tori. For example, suppose that $\Delta \omega_{i-1}$ through $\Delta \omega_{i-5}$ are all equal to $\phi = \frac{1+\sqrt 5}{2} \times 10^{-4}$, so $\varphi_{i}=\phi$. Then, it can happen that the quasi-Newton method diverges for $\omega=\omega_{i}+\phi$, but converges for $\omega = \omega_{i}+\phi/2= \omega_{i+1}$, so $\Delta \omega_{i} = \phi/2$. However, $\varphi_{i+1}$ will still equal $\phi$, so we try $\Delta \omega_{i+1} = \phi$. If the quasi-Newton method converges for $\omega = \omega_{i+1} + \phi = \omega_{i}+ 3\phi/2$, then we will have crossed the torus gap encountered earlier at $\omega=\omega_{i}+\phi$. This is schematically illustrated in Fig. \ref{fig:gap}; we draw the tori on a projection of the 2D cylindrical NHIM $\Xi_{\varepsilon}$ defined in Section \ref{stroboscopic} (we let $(\theta, I)$ be coordinates on $\Xi_{\varepsilon}$).

\subsection{Discretization and Implementation} \label{discretizationSection}

When implementing the previously-described methods on a computer, it is necessary to discretize all the functions used as well as the operations on them. We represent $K$, $P$, $\Lambda$, and other functions of $\theta$ as arrays of their values on a discrete grid of $N$ evenly spaced $\theta$ values $\theta_{i}= 2\pi i/N$, $i = 0, \dots, N-1$. Many operations on functions can be carried out element-wise on these arrays; such operations include basic scalar arithmetic, matrix multiplication, and matrix inversion. For instance, given arrays of values $P(\theta_{i})$ and $\xi(\theta_{i})$, we can calculate a new array of $N$ values $\Delta K(\theta_{i}) = P(\theta_{i}) \xi(\theta_{i})$ (note that the $\Delta K$ array will actually contain $4N$ floating point numbers, since each $\Delta K(\theta_{i}) \in \mathbb{R}^{4}$). 

Other operations are more efficiently carried out using Fourier coefficients. For instance, given an array of function values $a(\theta_{i})$ for some $a:\mathbb{T}\rightarrow \mathbb{R}$, we can use
\begin{gather} \label{translate} a(\theta_{i})  = \frac{1}{N} \sum_{k =0}^{N-1} \hat a(k) e^{jk\theta_{i}}  \rightarrow a(\theta_{i}+\omega)  = \frac{1}{N} \sum_{k =0}^{N-1} [\hat a(k)e^{jk\omega}] e^{jk\theta_{i}}  \\\label{differentiate} a(\theta_{i})  = \frac{1}{N} \sum_{k =0}^{N-1} \hat a(k) e^{jk\theta_{i}}  \rightarrow Da(\theta_{i})  = \frac{1}{N} \sum_{k =0}^{N-1} [j k \hat a(k)] e^{jk\theta_{i}}  \end{gather}
to translate or differentiate $a$. We use the fast Fourier transform (FFT) to get $N$ Fourier coefficients $\hat a(k)$, multiply each $\hat a(k)$ by $e^{jk\omega}$ (translation) or $jk$ (differentiation), and then take the inverse FFT to get an array of values $a(\theta_{i}+\omega) $ or $Da(\theta_{i})$. Solving cohomological equations like \eqref{cohomological} also requires working with Fourier coefficients as described in Section \ref{Kstep}.  

A few numerical problems were experienced due to the discretization of continuous functions on the computer. Let $\text{Trans}_{\omega}(a(\theta_{i}))$ represent the array of $a(\theta_{i}+\omega)$ values found by applying the algorithm of Eq. \eqref{translate} to the array of $a(\theta_{i})$ values. The first problem was that $\text{Trans}_{\omega}(F(\theta_{i})G(\theta_{i})) \neq \text{Trans}_{\omega}$($F(\theta_{i})) \times \text{Trans}_{\omega}(G(\theta_{i}))$; multiplying two arrays and then translating the result gives a different result than first translating the two arrays and then multiplying the results. Also, $F(\theta_{i}) \neq \text{Trans}_{-\omega}(\text{Trans}_{+\omega}(F(\theta_{i}))) $ when $N$ is even and real-to-complex FFT is used (for real data). These issues can prevent quasi-Newton method convergence. 

Both inequalities are most pronounced when the high-frequency discrete Fourier transform coefficients of $F$ or $G$ are large in magnitude. It is not expected that the tori or bundles we compute should have significant high-frequency oscillations as a function of $\theta$. Hence, a solution to these two problems was to run $K(\theta_{i})$ and $P(\theta_{i})$ through a lowpass filter during the first two or three quasi-Newton steps, as well as when the quasi-Newton method would start diverging; note that this is somewhat reminiscent of Arnold's use of truncated Fourier series with successively increasing cutoff frequencies in his proof of the KAM theory \citep{kamTutorial}. We also found that modifying $P$ between continuation steps to make $\Lambda$ constant, as described in Section \ref{constantLambda}, greatly mitigates these numerical issues as well. Finally, if the high-frequency Fourier coefficients keep becoming large after each quasi-Newton step despite filtering and constant $\Lambda$, we increase the number of Fourier modes used (equivalent to increasing $N$). 

One phenomenon we noticed during the implementation of our quasi-Newton method-based continuation was that generally, the torus parameterization $K(\theta)$ converges to within a given error tolerance before the bundle and Floquet matrices $P(\theta)$ and $\Lambda(\theta)$. We can use this to further improve the numerical stability of our quasi-Newton method, by using the converged $K(\theta)$ to directly compute the full $P$ and $\Lambda$ matrices. To do this, as mentioned earlier, the first column of $P$ is simply $DK(\theta)$. The third and fourth columns of $P$ (the stable and unstable bundles $\bold{v}_{s}$ and $\bold{v}_{u}$) can be found using the ``power method", as is described by \citet{haroLlave}. For this, first set $\bold{v}_{s,0}(\theta)$ and $\bold{v}_{u,0}(\theta)$ equal to the unit-length normalized third and fourth columns of $P(\theta)$ (the unconverged, approximate stable and unstable bundles). This is then followed by the iteration
\begin{equation} \bold{v}_{s,i+1}(\theta) = \frac{DF(K(\theta))^{-1} \bold{v}_{s,i}(\theta+\omega)}{\| DF(K(\theta))^{-1} \bold{v}_{s,i}(\theta+\omega) \|} \end{equation}
\begin{equation} \bold{v}_{u,i+1}(\theta) = \frac{DF(K(\theta-\omega)) \bold{v}_{u,i}(\theta-\omega)}{\| DF(K(\theta-\omega)) \bold{v}_{u,i}(\theta-\omega) \|} \end{equation}
which should converge after a few iterations (in practice, we also run each $\bold{v}_{s,i}(\theta)$, $\bold{v}_{u,i}(\theta)$ through a lowpass filter after its computation). Once the iterations have converged, we use the methods of Section \ref{constantLambda} to rescale $\bold{v}_{s}$ and $\bold{v}_{u}$ to ensure constant $\lambda_s$ and $\lambda_u$. Finally, we can use the exact same method presented in Section \ref{continuationSection} to compute the second column of $P$ from the known $DK$, $\bold{v}_{s}$, and $\bold{v}_{u}$ (see Eq. \eqref{abcd}-\eqref{sympconj}); the method of Section \ref{constantLambda} is then applied to make $T$ constant. This gives us the final $P$ and $\Lambda$ matrices which in our experience not only usually satisfy Eq. \eqref{bundleEquations} to within tolerance (sometimes one last quasi-Newton correction step is required), but also have smaller high-order Fourier coefficients than the earlier approximate $P$ and $\Lambda$; this further improves our method's numerical stability. 

\subsection{Numerical Results in the PERTBP} \label{numResults}

We implemented and successfully applied the methods described in the previous sections to the computation of invariant circles and their bundles for the Jupiter-Europa PERTBP stroboscopic map. We used a tolerance of $10^{-7}$ in Eq. \eqref{invariance}-\eqref{bundleEquations}. The circles and bundles were found by first continuing Jupiter-Europa PCRTBP unstable resonant periodic orbits by eccentricity $\varepsilon$ to $\varepsilon = 0.0094$ (the real value) for fixed $\omega$, and then continuing the circles and bundles by $\omega$ while fixing $\varepsilon = 0.0094$. 

Both 3:4 as well as 5:6 resonant tori were computed. Fig. \ref{fig:56plot1} shows the continuation by eccentricity of a 5:6 resonant periodic orbit from the PCRTBP to an invariant circle of the PERTBP stroboscopic map; for this, we used $N=2048$ discretization $\theta_{i}$ values. The plot on the right zooms into the region near Europa; the leftmost curve there corresponds to $\varepsilon = 0.0094$, which is to be expected as Europa's periapsis moves leftwards as $\varepsilon$ increases. Fig. \ref{fig:34plot1} shows the continuation of a 3:4 resonant torus in the physical $\varepsilon = 0.0094$ Jupiter-Europa PERTBP by $\omega$, which yields a family of resonant tori in the system for $\omega \in [1.536217, 1.567314]$; $N$ ranged from 1024 to 32768, with larger $N$ required for tori passing close to the singularity in the equations of motion at Europa. Fig. \ref{fig:56famplot1} shows a family of Jupiter-Europa PERTBP 5:6 resonant tori, also generated using continuation by $\omega$. This family, like the PCRTBP 5:6 resonant orbit family, does not have monotonically increasing or decreasing rotation numbers; $\omega$ starts at 1.035166 for the leftmost torus in the zoomed-in plot, decreases to 1.027137, and then increases to 1.040911 for the rightmost torus. Thus, we first continued two different PCRTBP 5:6 resonant orbits by $\varepsilon$ to get two PERTBP tori, one in each of the two sections of tori with monotone $\omega$. These two tori were then continued by $\omega$ to sweep out the tori in their corresponding sections. For this case, $N$ ranged from 1024 to 4096.

\begin{figure}
\includegraphics[width=0.499\columnwidth]{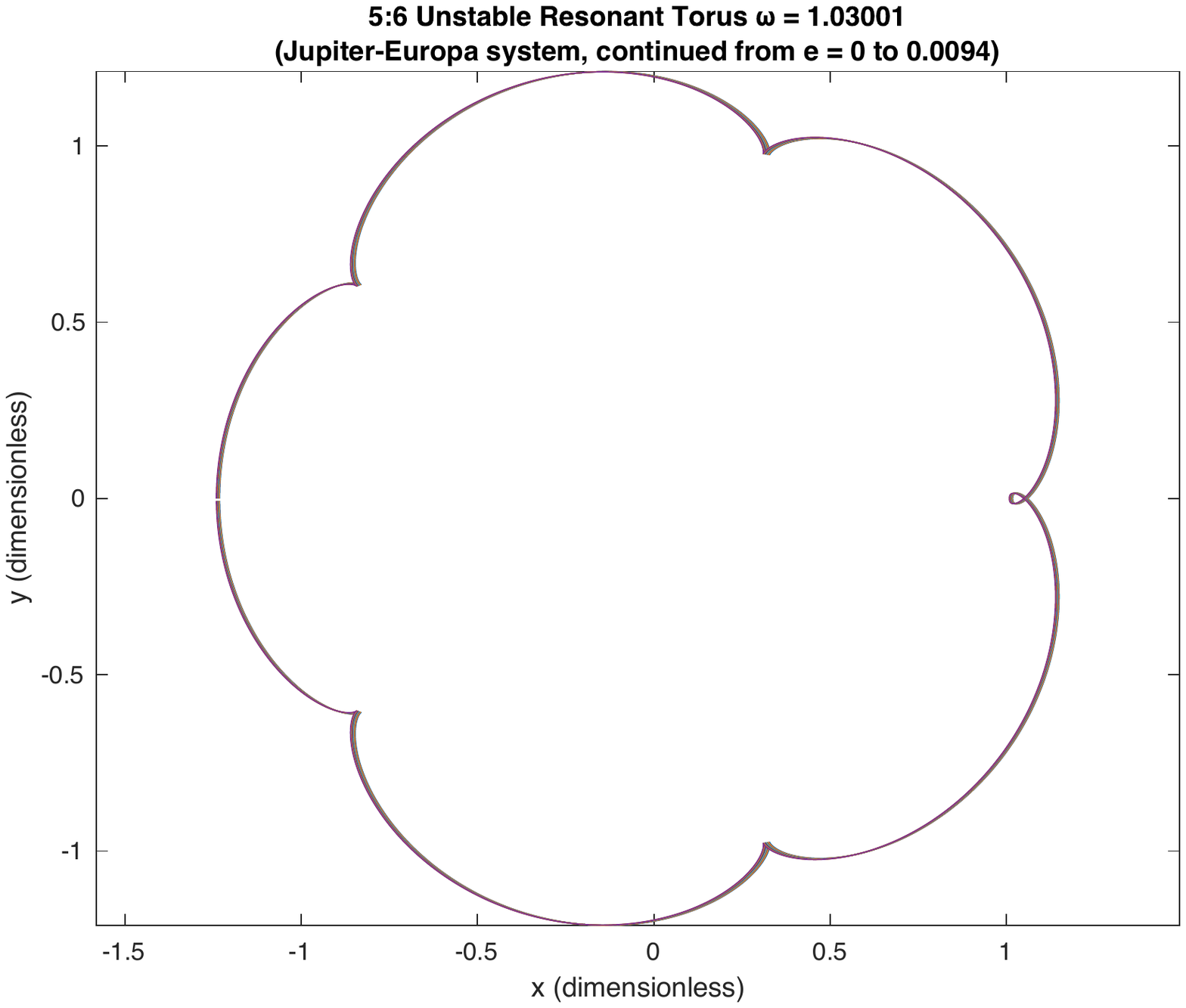}
\includegraphics[width=0.499\columnwidth]{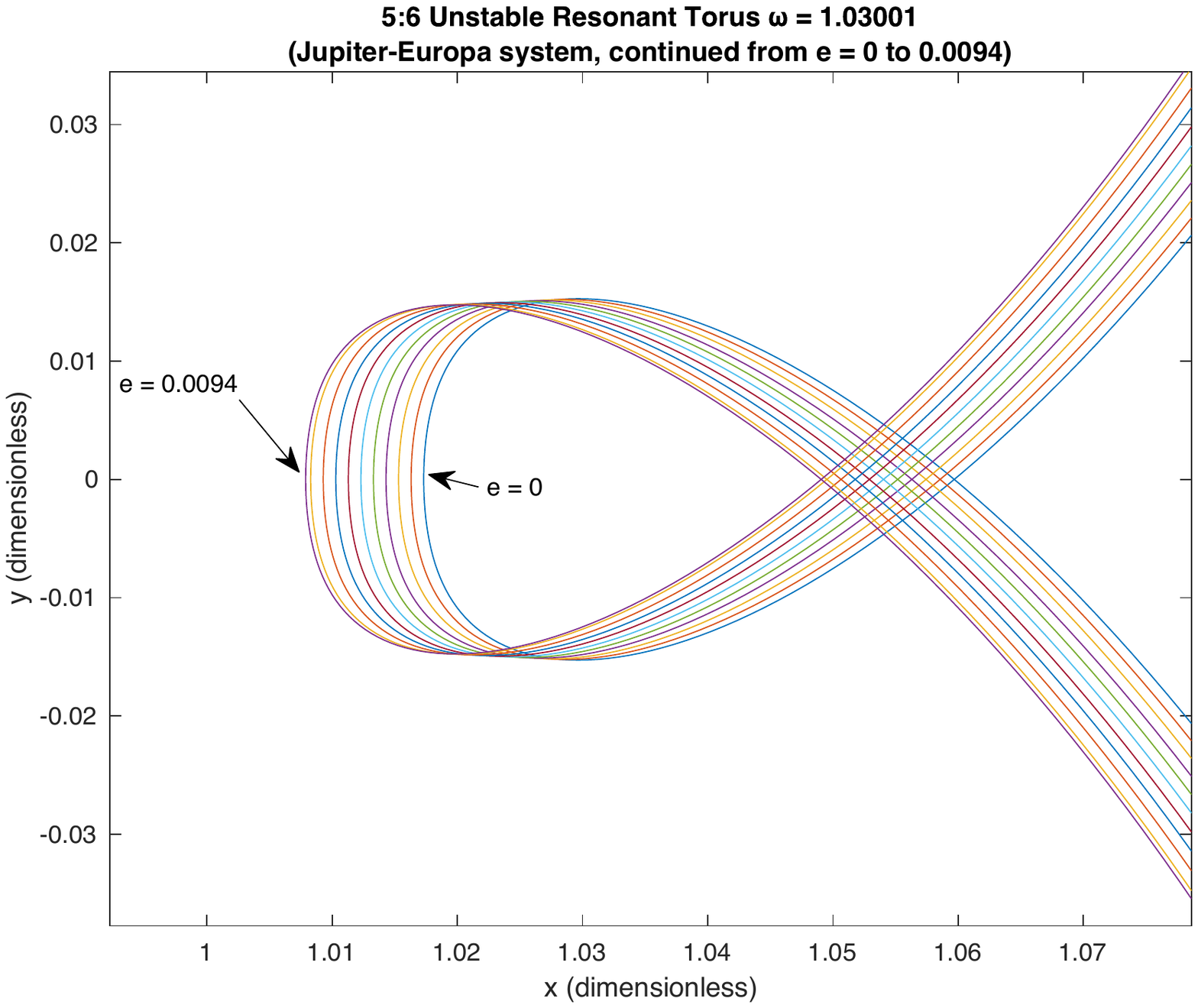}
\caption{\label{fig:56plot1}  Continuation of 5:6 Jupiter-Europa PERTBP resonant torus from $\varepsilon = 0$ to $0.0094$} 
\end{figure}

\begin{figure}
\includegraphics[width=0.5\columnwidth]{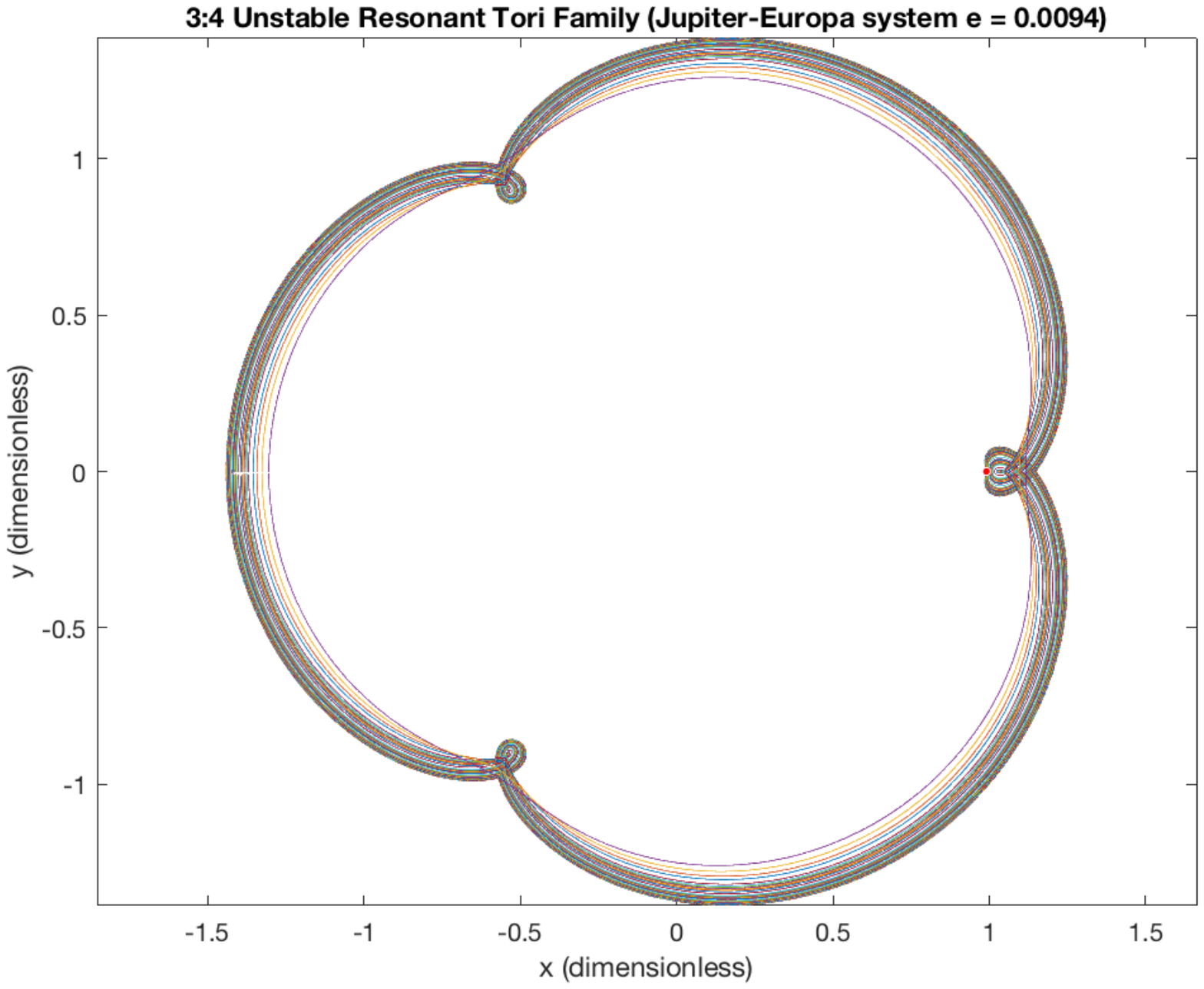}
\includegraphics[width=0.5\columnwidth]{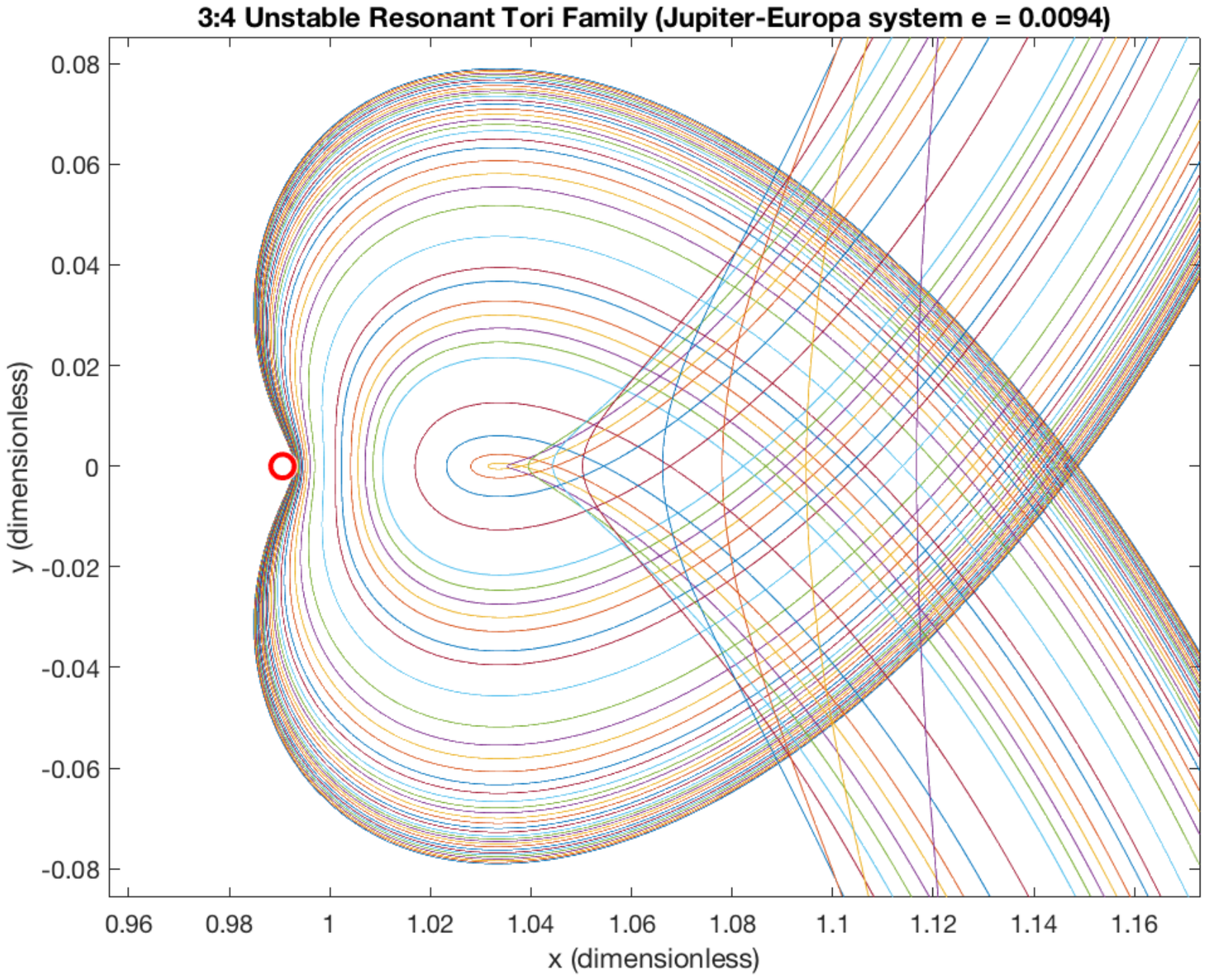}
\caption{\label{fig:34plot1}  Continuation of $\varepsilon = 0.0094$ Jupiter-Europa PERTBP 3:4 resonant tori by $\omega$ (Europa surface shown as red circle)}
\end{figure}

\begin{figure}
\includegraphics[width=0.5\columnwidth]{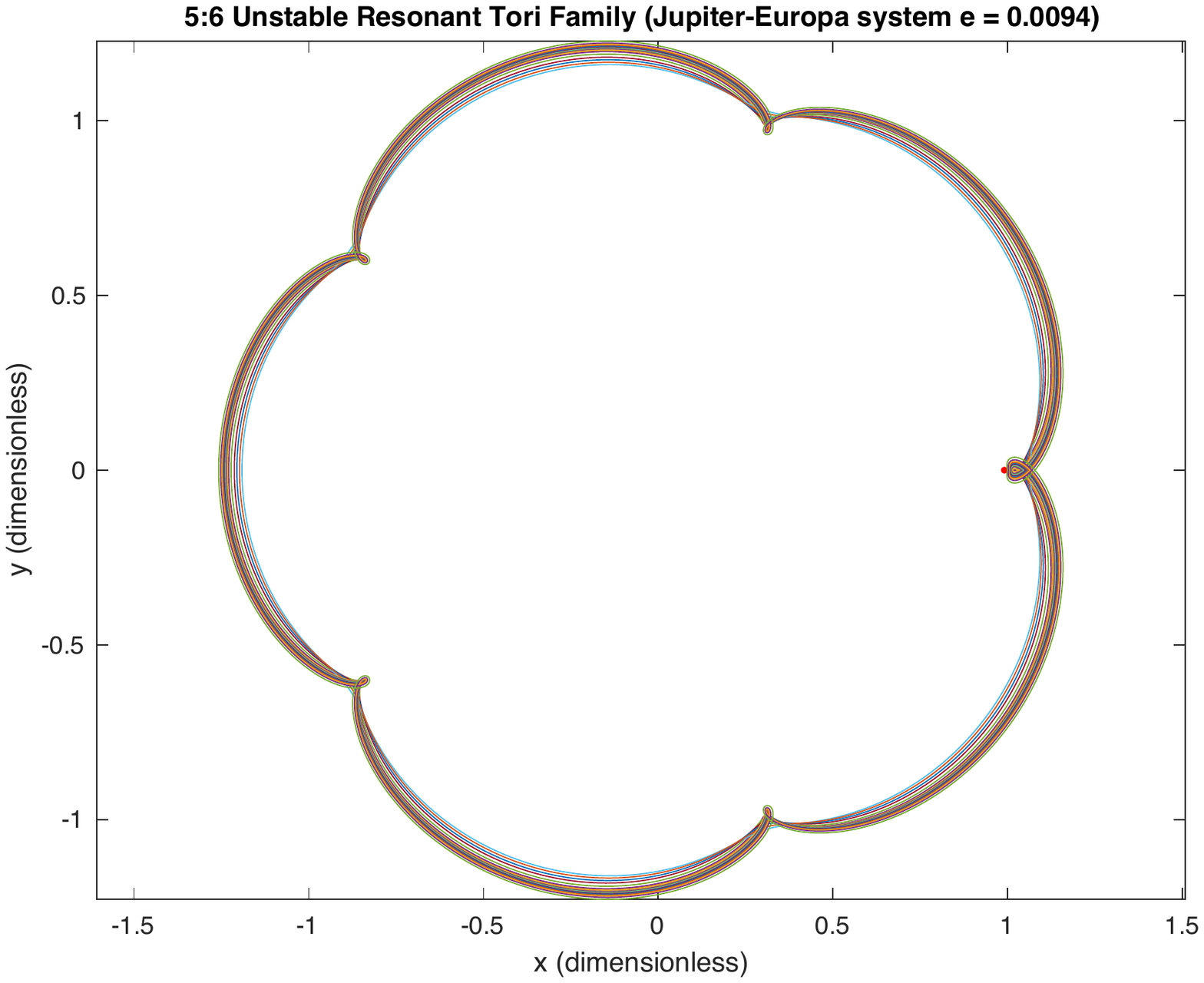}
\includegraphics[width=0.5\columnwidth]{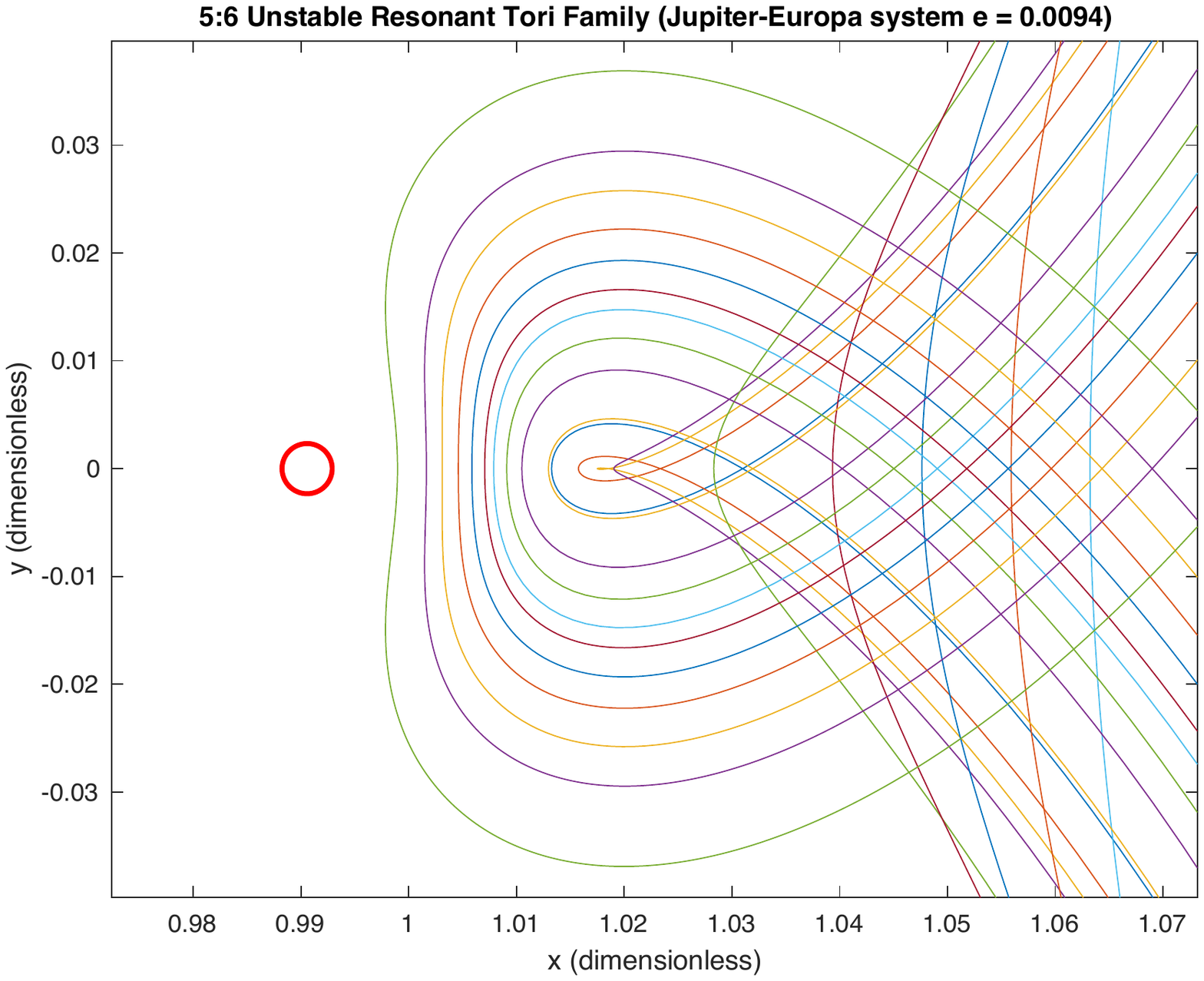}
\caption{\label{fig:56famplot1}  Continuation of $\varepsilon = 0.0094$ Jupiter-Europa PERTBP 5:6 resonant tori by $\omega$ (Europa surface shown as red circle)}
\end{figure}

After computing tori in the physical Jupiter-Europa PERTBP with $\varepsilon = 0.0094$, we also tested our quasi-Newton method to see if it would work for larger $\varepsilon$. Fig. \ref{fig:34plotall} shows selected tori from the continuation of a 3:4 resonant periodic orbit from the PCRTBP to an invariant circle of the PERTBP with Jupiter-Europa mass ratio $\mu$, but $\varepsilon = 0.206$. This eccentricity is larger than that of the Sun-Mercury system, which has one of the most eccentric two-body orbits of any pair of large solar system bodies. We used $N=1024$ and a continuation step size of $\Delta \varepsilon = 0.0005$ (the quasi-Newton method failed to converge for larger step sizes); every 20th torus is shown in the figure. As can be seen, our method was robust even for large values of the perturbation $\varepsilon$. On a 2017-era quad-core i7 laptop CPU, our Julia program took only about 230 seconds for the entire continuation to $\varepsilon = 0.206$, and less than 10 s for continuation to the physical value $\varepsilon = 0.0094$. 

\begin{figure}
\includegraphics[width=0.5\columnwidth]{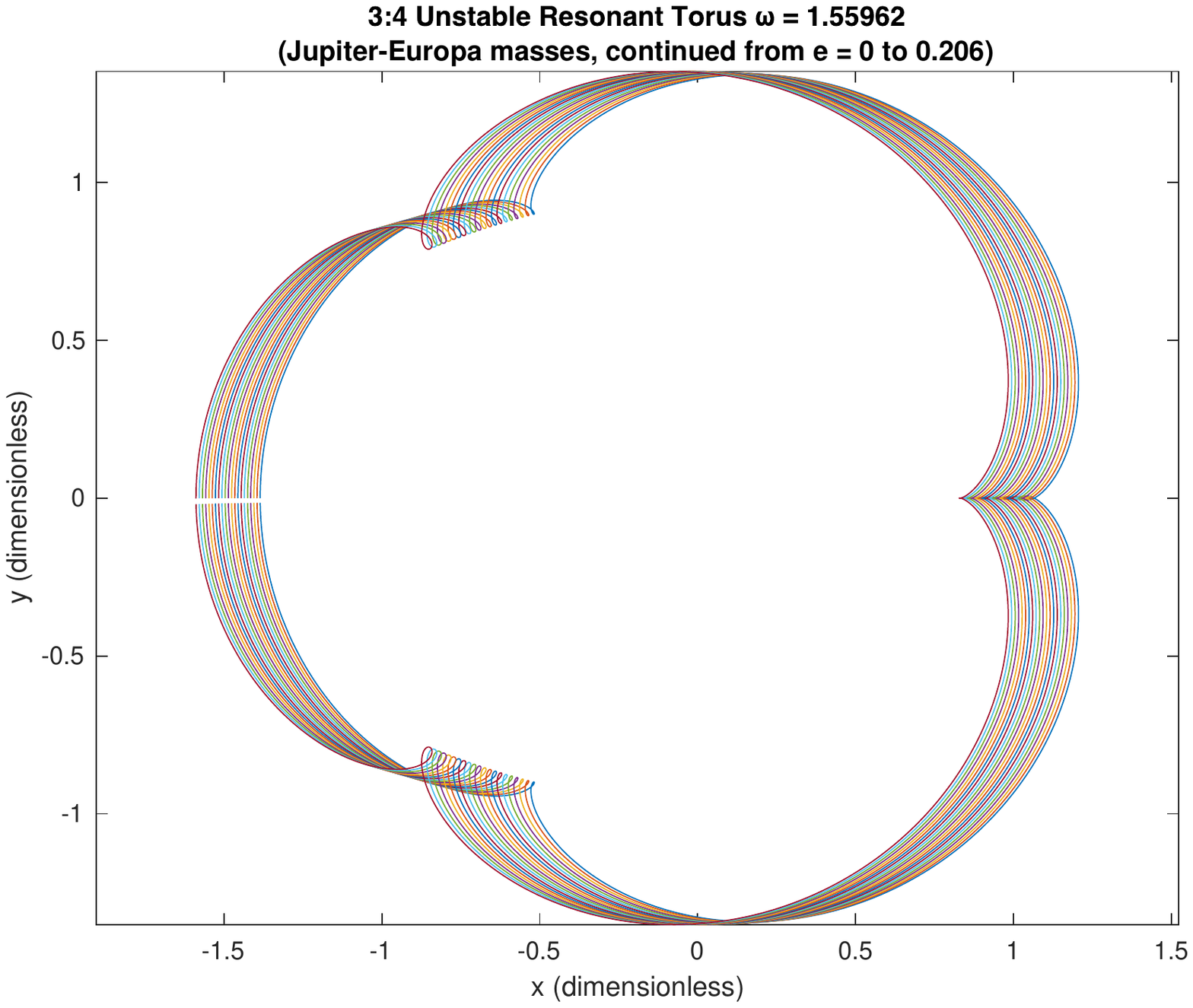}
\includegraphics[width=0.5\columnwidth]{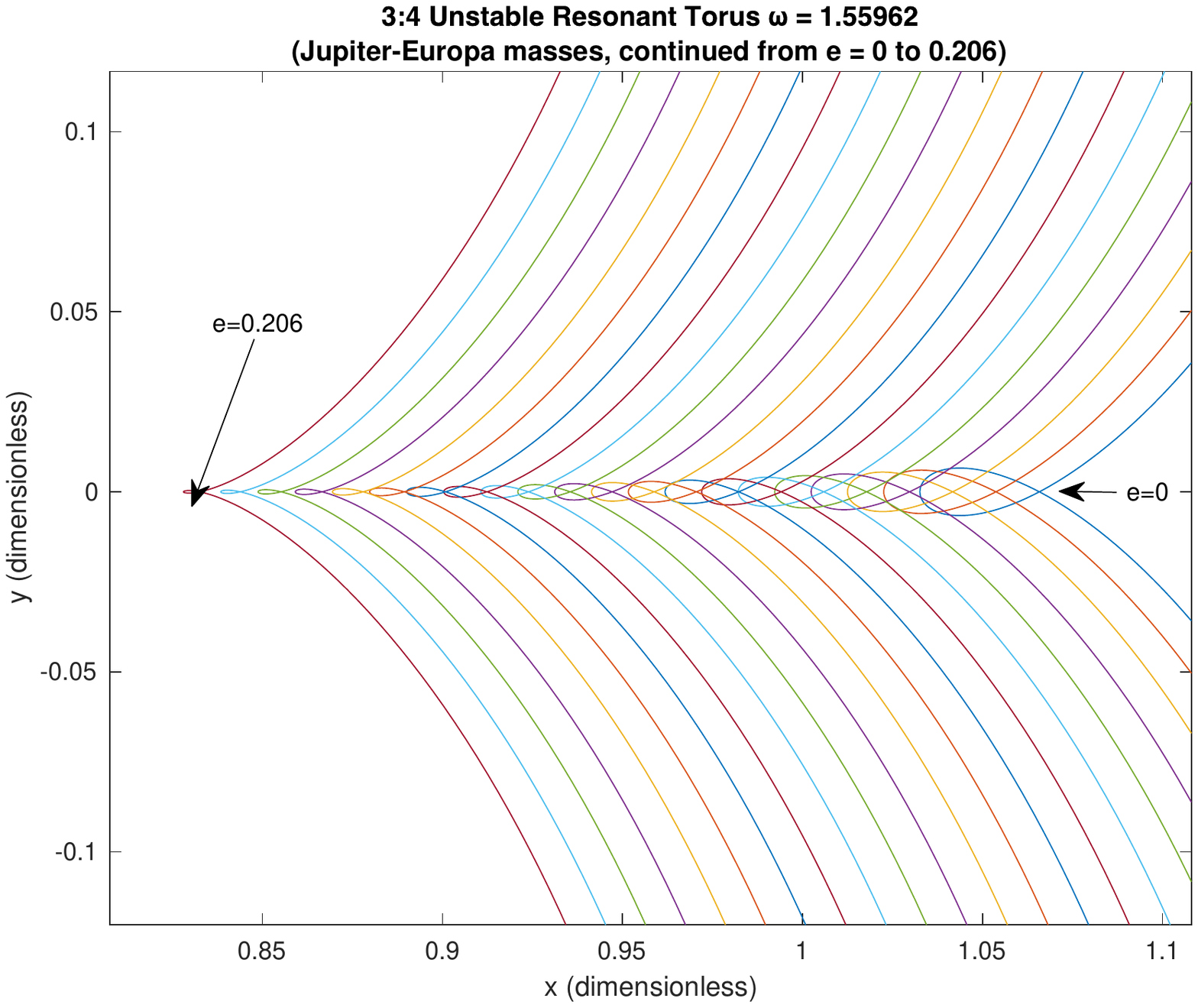}
\caption{\label{fig:34plotall} Selected tori from 3:4 Jupiter-Europa PERTBP continuation from $\varepsilon = 0$ to $0.206$} 
\end{figure}
\section{Parameterization Method for Stable and Unstable Manifolds} \label{parambigsection}
With the invariant circles and their stable and unstable bundles computed, we next turn our attention to accurate computation of torus stable and unstable manifolds. Many studies using manifolds, such as \citet{olikaraThesis}, use linear approximations of invariant manifolds found by adding small vectors in the stable or unstable directions to the points of the torus, and then integrating backwards or forwards. However, we compute high order Fourier-Taylor polynomials which approximate the manifolds very accurately in some domain of validity. The algorithm used here bears many similarities with the method used in previous work \citep{kumar2021journal} for computation of 1D manifolds of period-maps for periodic orbits in the PCRTBP.  A different version of this algorithm was also used by \citet{zhang} in a lower-dimensional setting. 

Since our $F$-invariant circles are 1D and have one stable and one unstable direction at each point, the circles' stable and unstable manifolds will be 2D and diffeomorphic to either an infinite cylinder or a M\"obius strip. A cylinder can be continuously parameterized using an angle $\theta$ and a real number $s$; this actually is also possible for a M\"obius strip, as long as the parameterization is non-injective and wraps around the strip twice as $\theta$ goes from 0 to $2\pi$ (the ``double covering" trick we used in Section \ref{continuationSection}). In the framework of Section \ref{paramsectiongeneral}, we have $\mathcal M = \mathbb{T} \times \mathbb{R}$ and $f(\theta, s) = (\theta+\omega, \lambda s)$, where $\lambda$ is the stable $\lambda_{s}$ or unstable $\lambda_{u}$ entry of $\Lambda$, depending on which manifold we are trying to compute. Without loss of generality, we assume that $\lambda_{s}$ and $\lambda_{u}$ are constant (see Section \ref{constantLambda}). With this, the equation to solve for the parameterization $W:\mathbb{T \times \mathbb{R}} \rightarrow \mathbb{R}^{4}$ of the stable or unstable manifold is 
\begin{equation}  \label{invariancequationfinal}   F(W(\theta, s)) - W(\theta + \omega, \lambda s) =0, \quad (\theta, s) \in \mathbb{T} \times \mathbb{R} \end{equation}

\subsection{Order-by-Order Method to find $W$} \label{paramsection}

We express $W$ as a Fourier-Taylor series of form
\begin{equation}  \label{series} W(\theta, s) = K(\theta) + \sum_{k \geq 1} W_{k}(\theta)s^{k}  \end{equation}
where $s=0$ corresponds to the invariant circle $K(\theta)$ whose manifold we are trying to compute. The $s^{0}$ term of $W$ is $K(\theta)$, and the linear term $W_{1}(\theta)$ is the stable $\bold{v}_{s}(\theta)$ or unstable $\bold{v}_{u}(\theta)$ bundle known from the third or fourth column of $P$. Hence we need to solve for the higher-order ``coefficients" $W_{k}(\theta):\mathbb{T} \rightarrow \mathbb{R}^{4}$, $k \geq 2$. 

Denote $W_{<k} (\theta, s) = K(\theta) + \sum_{j = 1}^{k-1} W_{j}(\theta)s^{j}  $. Assume we have solved for all $W_{j}(\theta)$ for $j <k$, so that $F(W_{<k}(\theta, s)) - W_{<k}( \theta+\omega, \lambda s)$ has only $s^{k}$ and higher order terms. Then, starting with $k=2$, the recursive method to solve for $W_{k}(\theta)$ is:
    \begin{enumerate}
        	\item Find $E_{k}(\theta)= [F(W_{<k}(\theta, s)) - W_{<k}(\theta+\omega,  \lambda s)]_{k}$, where $[\cdot]_{k}$ denotes the $s^{k}$ coefficient of the term inside brackets. We show how to do this in Section \ref{jettransport}. 
	\item Find $W_{k} (\theta)$ such that $W_{<k} (\theta,s)+W_{k} (\theta)s^{k}$ cancels $E_k (\theta)s^{k}$ in Eq. \eqref{invariancequationfinal}, thus satisfying Eq. \eqref{invariancequationfinal} up to order $s^{k}$. The equation to solve for $W_{k}(\theta)$ is 
	\begin{equation}  \label{correctionwk} DF(K(\theta)) W_{k}(\theta) - \lambda^{k} W_{k}(\theta+\omega) = -E_{k}(\theta) \end{equation}
	To solve this, let $W_{k,0}=0$ and iterate the following sequence to convergence:
	\begin{equation}  \label{Witer} W_{k,i+1}(\theta) =   \left\{ \begin{matrix} 
      \lambda^{k} DF(K(\theta))^{-1} W_{k,i}(\theta+\omega) - DF(K(\theta))^{-1}E_{k}(\theta)  &\text{if $|\lambda| < 1$}  \\
        \lambda^{-k} DF(K(\theta-\omega)) W_{k,i}(\theta-\omega)  + \lambda^{-k}E_{k}(\theta-\omega) \,\,\,\, &\text{if $|\lambda| > 1$} 
   \end{matrix} \right. \end{equation} (Fourier methods are an alternate method of solving Eq. \eqref{correctionwk}; see Remark \ref{remarkFourier2})
	\item Set $W_{<k+1} (\theta,s) = W_{<k} (\theta, s) + W_k(\theta) s^{k}$ and return to step 1.
    \end{enumerate}
    The recursion is stopped when we are satisfied with the degree $k$ of $W$. We now prove that the equations and method described in Step 2 to find $W_{k}$ are valid. 
\begin{claim} If $W_{k}$ solves Eq. \eqref{correctionwk}, then for $j \leq k$ (using the $[\cdot]_{k}$ notation defined earlier),
\begin{equation}  \label{jcoeff} \left[  F(W_{<k}(\theta, s)+W_{k}(\theta)s^{k}) - \left(W_{<k}(\theta+\omega, \lambda s)+W_{k}(\theta+\omega)( \lambda s)^{k}\right) \right]_{j}= 0 \end{equation} 
\end{claim}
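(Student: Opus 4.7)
The plan is to verify Eq. \eqref{jcoeff} by splitting into the two cases $j < k$ and $j = k$, working entirely at the level of formal power series in $s$. The central observation is that the correction $W_{k}(\theta) s^{k}$ carries a prefactor $s^{k}$, so Taylor-expanding $F$ about the previous partial sum $W_{<k}(\theta, s)$ produces nonlinear corrections of order $\mathcal O(s^{2k})$, leaving all orders $j \leq k$ clean for direct bookkeeping.

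First I would treat the range $j < k$. Adding $W_{k}(\theta) s^{k}$ changes neither side of \eqref{invariancequationfinal} at order $s^{j}$: on the right, $W_{k}(\theta+\omega)(\lambda s)^{k}$ does not contribute below order $s^{k}$; on the left, the Taylor expansion
\begin{equation*}
F(W_{<k}(\theta, s) + W_{k}(\theta) s^{k}) = F(W_{<k}(\theta, s)) + DF(W_{<k}(\theta, s)) W_{k}(\theta) s^{k} + \mathcal O(s^{2k})
\end{equation*}
shows that the $s^{j}$ coefficient is unchanged for $j < k$. The inductive hypothesis stated just before Step 1 of the recursion --- namely that $F(W_{<k}(\theta, s)) - W_{<k}(\theta+\omega, \lambda s)$ has only $s^{k}$ and higher-order terms --- then closes this case.

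Next, for $j = k$, I would read off the $s^{k}$ coefficient from the same expansion. Since $W_{<k}(\theta, 0) = K(\theta)$, only the $s^{0}$ part of $DF(W_{<k}(\theta, s))$ can contribute to $\left[ DF(W_{<k}(\theta, s)) W_{k}(\theta) s^{k} \right]_{k}$, producing exactly $DF(K(\theta)) W_{k}(\theta)$. Using the definition $E_{k}(\theta) = [F(W_{<k}(\theta, s)) - W_{<k}(\theta+\omega, \lambda s)]_{k}$ together with the identity $[W_{k}(\theta+\omega)(\lambda s)^{k}]_{k} = \lambda^{k} W_{k}(\theta+\omega)$, the $j = k$ coefficient of \eqref{jcoeff} collapses to
\begin{equation*}
E_{k}(\theta) + DF(K(\theta)) W_{k}(\theta) - \lambda^{k} W_{k}(\theta+\omega),
\end{equation*}
which vanishes precisely by Eq. \eqref{correctionwk}.

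The main obstacle is really just careful bookkeeping with the Taylor expansion: I have to confirm that the quadratic-and-higher terms of $F$ about $W_{<k}$ contribute only at order $s^{2k}$ and above, and that the $s$-dependent part of $DF(W_{<k}(\theta, s))$ does not leak into order $s^{k}$ when multiplied against $W_{k}(\theta) s^{k}$. Both are immediate from the $s^{k}$ prefactor on the correction and from $W_{<k}(\theta, s) = K(\theta) + \mathcal O(s)$, so no convergence or analyticity arguments are needed --- the entire verification is a finite manipulation of formal power series in $s$ whose coefficients are functions of $\theta$.
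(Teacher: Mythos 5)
Your proof is correct and follows essentially the same route as the paper: Taylor-expand $F$ around $W_{<k}$, observe the quadratic remainder is $\mathcal O(s^{2k})$, split into $j<k$ and $j=k$, and use the inductive hypothesis $F(W_{<k})-W_{<k}(\cdot+\omega,\lambda s)=E_k(\theta)s^k+\mathcal O(s^{k+1})$ together with Eq.~\eqref{correctionwk}. The only cosmetic difference is that the paper reads the $j=k$ coefficient by factoring out $s^k$ and letting $s\to 0$, whereas you phrase it as extracting the $s^0$ part of $DF(W_{<k}(\theta,s))$ --- these are the same step.
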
 
\begin{proof} Recall that $ F(W_{<k}(\theta, s)) - W_{<k}(\theta+\omega,  \lambda s)=E_{k}(\theta)s^{k} + \mathcal O(s^{k+1}) $. Expanding Eq. \eqref{jcoeff} in Taylor series and keeping only $s^{k}$ and lower order terms gives
\begin{align}  \begin{split}
\Big[F(W_{<k}&(\theta, s)) + \Big. DF(W_{<k}(\theta, s)) W_{k}(\theta)s^{k} - \\
& \quad \quad \quad \left. \left(W_{<k}(\theta+\omega, \lambda s)+W_{k}(\theta+\omega)( \lambda s)^{k}\right) \right]_{j} \\
=&[E_{k}(\theta)s^{k} + DF(W_{<k}(\theta,s)) W_{k}(\theta)s^{k} - \lambda^{k} W_{k}(\theta+\omega)s^{k} ]_{j} \\
=&\begin{cases} 
      0 &\text{if $j<k$}, \\
      E_{k}(\theta) + DF(K(\theta)) W_{k}(\theta) - \lambda^{k} W_{k}(\theta+\omega) = 0 &\text{if $j=k$}
   \end{cases} \\
\end{split} \end{align}
where the $j=k$ case of the last line follows from the preceding line by dividing $s^{k}$ out from the quantity inside $[.]_{j}$, and then taking $s \rightarrow 0$. \qed
\end{proof}

\begin{claim} The sequence $\{ W_{k,i}\}_{i \in \mathbb{N}}$ defined by $W_{k,0}=0$ and Eq. \eqref{Witer} converges to $W_{k}$.
\end{claim}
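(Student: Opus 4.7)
The plan is to reduce convergence of the affine iteration \eqref{Witer} to an exponential bound on the error, after passing to the bundle coordinates provided by $P$. I will focus on the case $|\lambda|<1$ (stable manifold); the case $|\lambda|>1$ is handled analogously, with $\omega$ replaced by $-\omega$ and $\lambda$ by $\lambda^{-1}$ throughout.

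First, rearranging Eq. \eqref{correctionwk} to isolate $W_k(\theta)$ on the left yields exactly the first line of \eqref{Witer} with $W_{k,i+1}=W_{k,i}=W_k$, so $W_k$ is a fixed point of the iteration. Subtracting this fixed-point equation from \eqref{Witer} shows that the errors $e_i(\theta) := W_{k,i}(\theta) - W_k(\theta)$ satisfy the homogeneous linear recursion
\begin{equation*}
e_{i+1}(\theta) = \lambda^k \, DF(K(\theta))^{-1} e_i(\theta+\omega), \qquad e_0 = -W_k,
\end{equation*}
and uniform convergence $W_{k,i}\to W_k$ is equivalent to $\|e_i\|_\infty \to 0$.

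Next I pass to the adapted bundle coordinates $\tilde e_i(\theta) := P(\theta)^{-1} e_i(\theta)$. Rearranging Eq. \eqref{bundleEquations} gives the key identity $DF(K(\theta))^{-1} P(\theta+\omega) = P(\theta) \Lambda(\theta)^{-1}$, which after assuming $\Lambda$ to be the constant matrix of Eq. \eqref{Lambdaform} (possible by Section \ref{constantLambda}) turns the recursion into
\begin{equation*}
\tilde e_{i+1}(\theta) = \lambda^k \Lambda^{-1} \tilde e_i(\theta+\omega), \qquad \text{hence} \qquad \tilde e_i(\theta) = \lambda^{ki} (\Lambda^{-1})^i \, \tilde e_0(\theta+i\omega).
\end{equation*}
The task thus reduces to bounding $|\lambda|^{ki} \|(\Lambda^{-1})^i\|$ uniformly in $i$.

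The main obstacle is that $\Lambda^{-1}$ contains the nondiagonal tangent-center block $\bigl[\begin{smallmatrix} 1 & -T \\ 0 & 1 \end{smallmatrix}\bigr]$, whose nilpotent part makes $(\Lambda^{-1})^i$ grow linearly in $i$ (its $(1,2)$ entry is $-iT$), so the one-step contraction argument used for Eq. \eqref{xi3}--\eqref{xi4} does not apply directly. However, the diagonal entries of $(\Lambda^{-1})^i$ are only $1,1,\lambda_s^{-i},\lambda_u^{-i}$, which yields the bound $\|(\Lambda^{-1})^i\| \le C(1+i)\max(1, |\lambda_s|^{-i})$ for a constant $C$ depending on $T$. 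Multiplying by $|\lambda|^{ki}=|\lambda_s|^{ki}$ and using the hypothesis $k\ge 2$ gives $|\lambda|^{ki}\|(\Lambda^{-1})^i\| \le C(1+i)|\lambda_s|^{(k-1)i}$, which tends to zero exponentially in $i$. Since $P$ and $P^{-1}$ are uniformly bounded on $\mathbb{T}$, this yields $\|e_i\|_\infty \to 0$ and completes the proof; the polynomial growth from the Jordan-type coupling is comfortably absorbed by the extra factor $|\lambda_s|^{(k-1)i}$ gained from working at order $k\ge 2$ rather than at the linearized order $k=1$.
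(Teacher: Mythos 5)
Your core estimate and the route through it are the same as the paper's: pass to the adapted coordinates via $P$, use $DF(K(\theta))^{-1} P(\theta+\omega) = P(\theta)\Lambda^{-1}$ to reduce the iteration to powers of $\Lambda^{-1}$, observe that the nilpotent tangent--center block produces only polynomial growth (the $(1,2)$ entry of $(\Lambda^{-1})^i$ is $-iT$) while the diagonal contributes at worst $|\lambda_s|^{-i}$, and then absorb both with the extra $|\lambda_s|^{(k-1)i}$ factor available because $k\geq 2$. The paper does exactly this, except that it writes the iterate $W_{k,i}$ explicitly as the partial sum $\sum_{j=0}^{i-1}\mathcal L^j(b)$ in bundle coordinates (Eq. \eqref{Wsum}--\eqref{Wsum2}) and shows that this series is absolutely and uniformly convergent.

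There is, however, one genuine (if small) gap in your framing. You define $e_i := W_{k,i} - W_k$ and show $\|e_i\|_\infty \to 0$, but this presupposes that a bounded, continuous $W_k$ solving Eq. \eqref{correctionwk} exists. Nothing proved so far guarantees that --- and the whole point of the claim is that the iteration manufactures $W_k$. The paper avoids this by never subtracting off a putative fixed point: it shows the partial sums converge, defines $W_k$ as the limit, and then verifies the limit satisfies Eq. \eqref{correctionwk}. Your argument can be repaired with the same estimate you already have: since the iteration is affine, $W_{k,i}=\sum_{j=0}^{i-1}\mathcal L^j(b)$, and your bound $\|\mathcal L^j\|\le C(1+j)|\lambda_s|^{(k-1)j}$ (in $P$-coordinates) shows this series is absolutely convergent, so its limit exists and, by passing to the limit in Eq. \eqref{Witer}, solves Eq. \eqref{correctionwk}. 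Equivalently, the same bound gives $\sum_j\|\mathcal L^j\|<\infty$, so $(I-\mathcal L)^{-1}=\sum_j\mathcal L^j$ exists by Neumann series and $W_k=(I-\mathcal L)^{-1}(b)$; only then does your error recursion make sense. The estimate is right; what is missing is the one sentence deriving existence from it rather than assuming it.
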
 
\begin{equation} W_{k,i+1}(\theta) =   \left\{ \begin{matrix} 
      \lambda^{k} DF(K(\theta))^{-1} W_{k,i}(\theta+\omega) - DF(K(\theta))^{-1}E_{k}(\theta)  &\text{if $|\lambda| < 1$}  \\
        \lambda^{-k} DF(K(\theta-\omega)) W_{k,i}(\theta-\omega)  + \lambda^{-k}E_{k}(\theta-\omega) \,\,\,\, &\text{if $|\lambda| > 1$} 
   \end{matrix} \right. \end{equation}
   \begin{proof} Let $P,\Lambda$ be the bundle and Floquet matrices for $K(\theta)$. We assume that $\Lambda$ is constant (as the procedure from Section \ref{constantLambda} gives). Then, it is easy to show that  
   \begin{equation}  \label{Wsum} W_{k,i}(\theta) =   \left\{ \begin{matrix} 
      -P(\theta) \sum_{j=0}^{i-1} \lambda^{kj}\Lambda^{-j-1} [ P^{-1}(\theta+(j+1)\omega)E_{k}(\theta+j\omega) ] &\text{if $|\lambda| < 1$}  \\
        P(\theta) \sum_{j=0}^{i-1} \lambda^{-k(j+1)}\Lambda^j [ P^{-1}(\theta-j\omega)E_{k}(\theta-(j+1)\omega) ]&\text{if $|\lambda| > 1$} 
   \end{matrix} \right. \end{equation}
solves Eq. \eqref{Witer} with $W_{k,0}=0$; simply substitute Eq. \eqref{Wsum} for $W_{k,i}$ in Eq. \eqref{Witer} and use $DF(K(\theta-\omega)) P(\theta-\omega) = P(\theta) \Lambda $ and $DF(K(\theta))^{-1}=P(\theta) \Lambda^{-1} P^{-1}(\theta+\omega)$ to simplify the RHS of the resulting equation.

Now, we will show that $W_{k}(\theta) = \lim_{i \rightarrow \infty} W_{k,i}(\theta)$. First of all, note that
\begin{equation} \lambda_{s}^{j} \Lambda^{-j} =  \begin{bmatrix}
\lambda_{s}^{j} &  -j\lambda_{s}^{j} T  & 0 & 0 \\ 0 &  \lambda_{s}^{j}   & 0 & 0 \\ 0 & 0  & 1 & 0 \\ 0 &  0 & 0 & \lambda_{s}^{j} \lambda_{u}^{-j} \end{bmatrix}  \quad \quad \quad \lambda_{u}^{-j} \Lambda^{j} =  \begin{bmatrix}
\lambda_{u}^{-j} &  j\lambda_{u}^{-j} T & 0 & 0 \\ 0 &  \lambda_{u}^{-j}   & 0 & 0 \\ 0 & 0  & \lambda_s^{j}\lambda_{u}^{-j} & 0 \\ 0 &  0 & 0 & 1 \end{bmatrix}  \end{equation}
for all $j \in \mathbb{N}$, where $\Lambda$ is of the form given in Eq. \eqref{Lambdaform} and has constant $T$, $\lambda_{s}$, and $\lambda_{u}$ as assumed earlier. Since $|\lambda_{s}|<1$ and $|\lambda_{u}|>1$, $ \lambda_{s}^{j} \Lambda^{-j} $ and $\lambda_{u}^{-j} \Lambda^{j} $ are hence bounded for all $j \in \mathbb{N}$. Now, define $\Gamma_{s}(\theta) =\Lambda^{-1} P^{-1}(\theta+\omega)E_{k}(\theta) $ and $\Gamma_{u}(\theta) = \lambda^{-k} P^{-1}(\theta)E_{k}(\theta-\omega) $; also, recall that $|\lambda|<1$ means $\lambda = \lambda_{s}$ and $|\lambda|>1$ means $\lambda = \lambda_{u}$. We can use all this to rewrite Eq. \eqref{Wsum} as
\begin{equation}  \label{Wsum2} W_{k,i}(\theta) =   \left\{ \begin{matrix} 
      -P(\theta) \sum_{j=0}^{i-1} \lambda_{s}^{(k-1)j} [\lambda_{s}^{j}\Lambda^{-j} \Gamma_{s}(\theta+j\omega) ] \quad &\text{if $|\lambda| < 1$}  \\
        P(\theta) \sum_{j=0}^{i-1} \lambda_{u}^{-(k-1)j} [\lambda_{u}^{-j} \Lambda^j \Gamma_u (\theta-j\omega) ] \quad &\text{if $|\lambda| > 1$} 
   \end{matrix} \right. \end{equation}
In both $|\lambda| < 1$ and $|\lambda| > 1$ cases of Eq. \eqref{Wsum2}, the quantities in square brackets are bounded for all $\theta \in \mathbb{T}$ and $j \in \mathbb{N}$. As $k \geq 2$, $\lambda_{s}^{k-1}<1$ and $\lambda_{u}^{-(k-1)}<1$; hence, if $i \rightarrow \infty$, the sum in Eq. \eqref{Wsum2} is absolutely uniformly convergent. Hence $L(\theta) = \lim_{i \rightarrow \infty} W_{k,i}(\theta)$ exists. Letting $i \rightarrow \infty$ on both sides of Eq. \eqref{Witer} gives
\begin{equation} L(\theta) = \left\{ \begin{matrix} 
      \lambda^{k} DF(K(\theta))^{-1} L(\theta+\omega) - DF(K(\theta))^{-1}E_{k}(\theta)  &\text{if $|\lambda| < 1$}  \\
        \lambda^{-k} DF(K(\theta-\omega)) L(\theta-\omega)  + \lambda^{-k}E_{k}(\theta-\omega) \,\,\,\, &\text{if $|\lambda| > 1$} 
   \end{matrix} \right. \end{equation}
   which for both $|\lambda| < 1$ and $|\lambda| > 1$ is equivalent to Eq. \eqref{correctionwk} with $W_{k}=L$. \qed
\end{proof}
\begin{remark} \label{remarkFourier2}
Given $P$ and $ \Lambda$ satisfying Eq. \eqref{bundleEquations}, substituting $W_{k} = PV_{k}$ into Eq. \eqref{correctionwk} and rearranging gives $\Lambda V_{k}(\theta) - \lambda^{k} V_{k}(\theta+\omega) = -P(\theta+\omega)^{-1}E_{k}(\theta)$, which can be solved for $V_{k}$ component by component using the Fourier methods from Section \ref{cohomsection}. We used the iteration method of Eq. \eqref{Witer} instead, to avoid any possible multiplication-translation numerical discretization issues (see Section \ref{discretizationSection}). 

\end{remark}

\subsection{Computing $E_{k}(\theta)$: Automatic Differentiation and Jet Transport} \label{jettransport}

In step 1 of the order-by-order method to find $W$, we compute the $s^{k}$ coefficient
\begin{equation} \label{Ekdef} E_{k}(\theta)=  [F(W_{<k}(\theta, s)) - W_{<k}(\theta+\omega,  \lambda s)]_{k} \end{equation}
In Eq. \eqref{Ekdef}, the $s^{k}$ term of $W_{<k}(\theta+\omega, \lambda s)$ is 0, since $W_{<k}(\theta, s)$ is a Fourier-Taylor series up to order $s^{k-1}$ and $\lambda$ is constant. However, computing the Fourier-Taylor expansion of $F(W_{<k}(\theta, s)) $ is more complicated, as $F$ is a nonlinear stroboscopic map defined by integrating points for a fixed time $2\pi/\Omega_{p}$ by the equations of motion \eqref{H_EOM} and \eqref{perturbed_H}. We will need the tools of automatic differentiation \citep{haroetal} and jet transport \citep{perezpalau2015} for this. Note that some researchers \citep{dast,Berz1998} use the term differential algebra to refer to what we call automatic differentiation. 

Automatic differentiation is an efficient and recursive technique for evaluating operations on Taylor series. For instance, let $f(s)$ and $g(s)$, $s \in \mathbb{R}$, be two series; we can use their known coefficients to compute $d(s) = f(s) / g(s)$ as a Taylor series as well. Let subscript $j$ denote the $s^{j}$ coefficient of a series; since $ f(s) = d(s) g(s)$, we find that $f_{i} = \sum_{j=0}^{i} d_{j} g_{i-j} =  \left( \sum_{j=0}^{i-1} d_{j} g_{i-j}(s) \right)+ d_{i} g_{0} $, which implies that
\begin{equation}  
  \label{autodiff}  d_{i} = \frac{1}{g_{0}} \left( f_{i} - \sum_{j=0}^{i-1} d_{j} g_{i-j} \right)\end{equation}  
Starting with $d_{0} = f_{0}/ g_{0}$, Eq. \eqref{autodiff} allows us to recursively compute $d_{i}$, $i \geq 1$. Similar formulas also exist for recursively evaluating many other functions and operations on Taylor series, including $f(s)^{\alpha}$, $\alpha \in \mathbb{R}$; see \citet{haroetal} for more examples. Most importantly, in all automatic differentiation formulas, the output series $s^{i}$ coefficient is a function of only the $s^{i}$ and lower order coefficients of the input series. Hence, we can use truncated Taylor series with these algorithms when implementing them in computer programs.

Recall from Section \ref{discretizationSection} that on the computer, we represent all functions of $\theta$, including the $W_{j}(\theta)$, as arrays of function values on an evenly spaced grid of $\theta$ values $\theta_{i}= 2\pi i/N$, $i = 0, \dots, N-1$. Note that for fixed $\theta_{i}$,  $W_{<k}(\theta_{i}, s)$ is a Taylor series (not Fourier-Taylor) with coefficients $W_{j}(\theta_{i}) \in \mathbb{R}^{4}$. Using automatic differentiation, we can substitute Taylor series such as $W_{<k}(\theta_{i}, s)$ for $(x, y, p_x, p_y)$ in the equations of motion \eqref{H_EOM}, which gives us series in $s$ for $(\dot x, \dot y,\dot p_x, \dot p_y)$. In terms of computer programming, this means that after overloading the required operators (usually arithmetic and power) to accept Taylor series arguments, we can use numerical integration routines with the series as well. 

To be more clear, consider a Taylor series-valued function of time $V(s,t) = \sum_{j=0}^{\infty}V_{j}(t)s^{j}:\mathbb{R}^{2} \rightarrow \mathbb{R}^{4}$, where $V_{j}(t)$ are its time-varying Taylor coefficients. Write $V_{x}(s,t)$, $V_{y}(s,t)$, ${V}_{p_{x}}(s,t)$, and $V_{p_y}(s,t)$ for the $x$, $y$, $p_x$, and $p_y$ components of $V(s,t)$; similarly write $V_{j,x}(t)$, $V_{j,y}(t)$, ${V}_{j,p_{x}}(t)$, and $V_{j,p_y}(t)$ for the components of $V_{j}(t)$. Substituting $V$ in Eq. \eqref{H_EOM} yields a system of differential equations 
\begin{equation} \label{vxdot}  \frac{d}{dt}{V_{x}(s,t)} = \sum_{j=0}^{\infty}\dot V_{j,x}(t)s^{j} = \frac{\partial H_{\varepsilon}}{\partial p_{x}}\Big(V_{x}(s,t),V_{y}(s,t), V_{p_x}(s,t),V_{p_y}(s,t),\theta_{p}\Big)  \end{equation}
\begin{equation}  \label{vydot} \frac{d}{dt}{V_{y}(s,t)} = \sum_{j=0}^{\infty}\dot V_{j,y}(t)s^{j} = \frac{\partial H_{\varepsilon}}{\partial p_{y}}\Big(V_{x}(s,t),V_{y}(s,t), V_{p_x}(s,t),V_{p_y}(s,t),\theta_{p}\Big)  \end{equation}
\begin{equation}  \label{vpxdot} \frac{d}{dt}{V_{p_x}(s,t)} = \sum_{j=0}^{\infty}\dot V_{j,p_x}(t)s^{j} = -\frac{\partial H_{\varepsilon}}{\partial x}\Big(V_{x}(s,t),V_{y}(s,t), V_{p_x}(s,t),V_{p_y}(s,t),\theta_{p}\Big)  \end{equation}
\begin{equation} \label{vpydot} \frac{d}{dt}{V_{p_y}(s,t)} = \sum_{j=0}^{\infty}\dot V_{j,p_y}(t)s^{j} = -\frac{\partial H_{\varepsilon}}{\partial y}\Big(V_{x}(s,t),V_{y}(s,t), V_{p_x}(s,t),V_{p_y}(s,t),\theta_{p}\Big)  \end{equation}
\begin{equation} \label{thetapdot} \dot \theta_p = \Omega_p \end{equation}
$H_{\varepsilon}$ and its partials are algebraic functions that are suitable for use with automatic differentiation techniques; see, for instance, the PERTBP Hamiltonian Eq. \eqref{pertbpH}. Hence, if the $V_{j,x}(t)$, $V_{j,y}(t)$, ${V}_{j,p_{x}}(t)$, $V_{j,p_y}(t)$, and $\theta_{p}$ are known for $j \in \mathbb{N}$ and some $t \in \mathbb{R}$, automatic differentiation allows us to simplify the RHS of each of Eq. \eqref{vxdot}-\eqref{vpydot} to a series in $s$. Then, for each of Eq. \eqref{vxdot}-\eqref{vpydot} and $j \in \mathbb{N}$, the $s^{j}$ coefficient $\dot V_{j,x}(t)$, $\dot V_{j,y}(t)$, ${\dot V}_{j,p_{x}}(t)$, or $\dot V_{j,p_y}(t)$ from the LHS must be equal to the $s^{j}$ coefficient of the RHS. In other words, $\dot V_{j,x}(t)$, $\dot V_{j,y}(t)$, ${\dot V}_{j,p_{x}}(t)$, and $\dot V_{j,p_y}(t)$, $j \in \mathbb{N}$, are functions of $\theta_{p}$, $ V_{j,x}(t)$, $ V_{j,y}(t)$, ${ V}_{j,p_{x}}(t)$, and $ V_{j,p_y}(t)$, $j \in \mathbb{N}$. This is effectively a system of differential equations for the time-varying Taylor coefficients of $V(s,t)$. Solving Eq. \eqref{vxdot}-\eqref{thetapdot}  with initial condition $V(s,0) = W_{<k}(\theta_{i}, s)$ and initial $\theta_{p}$ equal to the value fixed in Section \ref{stroboscopic}, we can compute $F(W_{<k}(\theta_{i}, s))=V(s,2\pi/\Omega_{p})$.

In summary, we consider the Taylor coefficients of $W_{<k}(\theta_{i}, s)$ as initial state variables to be numerically integrated coefficient by coefficient; propagating by time $2\pi/\Omega_{p}$, we get the Taylor coefficients of $F(W_{<k}(\theta_{i}, s))$. Doing this for each $i = 0, \dots, N-1$ is enough to represent the Fourier-Taylor coefficients of $F(W_{<k}(\theta, s))$ on the computer, up to order $k$; the $s^{k}$ coefficient of this gives us $E_{k}(\theta)$. This approach for numerical integration of Taylor series is called jet transport; see \citet{perezpalau2015} for more details. Truncated Taylor series can be used with jet transport, since the automatic differentiation techniques used to evaluate time derivatives work with truncated series. Note that for an $n$-dimensional state ($n = 4$ in our case) and degree-$d$ truncated series, there are $n(d + 1)$ coefficients, which is the required dimension for the numerical integration. 

\subsection{Notes About Numerical Computation of Manifolds} 

We implemented the parameterization method, automatic differentiation, and jet transport of Sections \ref{paramsection} and \ref{jettransport} in a C program for computation of stable and unstable manifolds. For numerical integration, including jet transport, we used the Runge-Kutta Prince-Dormand (8,9) integrator from the GSL library \citep{gslManual}; integrations were parallelized using OpenMP with one thread for each $\theta_{i}$ value. We tested our tools by computing manifolds of some of the 3:4 and 5:6 Jupiter-Europa PERTBP tori shown in Fig. \ref{fig:56plot1} and \ref{fig:34plot1}, with $N$ ranging from 1024 to 2048. On a quad-core Intel i7 laptop CPU, the program took less than 10 seconds for the computation of $s^{5}$-order parameterizations. 

Note that in each step of order $k$, when $F(W_{<k} (\theta_{i}, s)) - W_{<k}(\theta_{i}+\omega, \lambda s)$ is computed in order to find $E_{k}(\theta_{i})$, the $s^{j}$ coefficients for $j<k$ should be zero (to compute the $W_{<k}(\theta_{i}+\omega, \lambda s)$ coefficients, use the translation algorithm from Section \ref{discretizationSection} on the arrays of $W_{j}(\theta_{i})$ values, and then multiply  $W_{j}(\theta_{i}+\omega)$ by $\lambda^{j}$). This behavior was indeed observed when running the program, serving as a check on the accuracy of the computation. The final $s^{d}$-degree truncated series $W_{\leq d}(\theta, s) = K(\theta) + \sum_{j = 1}^{d} W_{j}(\theta)s^{j} $ satisfies $F(W_{\leq d}(\theta_{i}, s)) - W_{\leq d}(\theta_{i}+\omega, \lambda s) =0$ up to terms of order $s^d$, for each $i = 0, \dots, N-1$.

In the $W_{k}(\theta)$ step, we truncate all series at $s^k$ for the automatic differentiation and jet transport steps; this optimizes computational time and storage requirements. Also, note that given $W(\theta, s)$ solving Eq. \eqref{invariancequationfinal}, $W(\theta, \alpha s)$ is also a solution for any $\alpha \in \mathbb{R}$. Sometimes, the jet transport integration may struggle to converge as a result of fast-growing coefficients $W_{j}(\theta)$ of $W(\theta, s)$; in this case, scaling $W(\theta, s)$ to $W(\theta, \alpha s)$ with $\alpha  <1$ can help. To do this, simply multiply $W_{1}(\theta)$ by $\alpha$ and then restart the order-by-order parameterization method algorithm. 

As a final remark, note that in certain systems, such as the PERTBP with $\theta_{p} = 0$ at $t=0$, the equations of motion have the same time-reversal symmetry as the PCRTBP. In this case, knowledge of the stable manifold $W^{s}(\theta, s)$ gives us the unstable manifold $W^{u}(\theta, s)$ simply by setting $W^{u}(\theta, s) = M W^{s}(2\pi-\theta, s)$ where $M$ is the diagonal matrix with diagonal entries $1, -1, -1$, and $1$. By doing this, we save half the computation time as compared to computing both $W^{s}$ and $W^{u}$. 

\subsection{Fundamental Domains of Parameterizations} 

The $d$ degree Fourier-Taylor parameterization $W_{\leq d}(\theta, s)$ of the manifolds of $K(\theta)$ will be more accurate than linear approximation by the stable or unstable direction at each point $K(\theta)$, However, due to series truncation error, $W_{\leq d}(\theta, s)$ is not exact. Furthermore, even the exact infinite series $W(\theta,s)$ satisfying Eq. \eqref{invariancequationfinal} would only be valid for $s$ within some radius of convergence. Thus, we must determine the values of $s \in \mathbb{R}$ for which $W_{\leq d}(\theta, s)$ accurately represents the invariant manifold. 

Fix an error tolerance, say $E_{tol} = 10^{-5}$ or $10^{-6}$. We now find what \citet{haroetal} call the fundamental domain of $W_{\leq d}(\theta, s)$; this is the largest set $\mathbb{T} \times (-D,D)$ such that for all $(\theta,s) \in \mathbb{T} \times (-D,D)$, the error in invariance Eq. \eqref{invariancequationfinal} is less than $E_{tol}$. That is, we seek the largest $D \in \mathbb{R}^{+}$ such that for all $s$ satisfying $|s| < D$, 
\begin{equation} \max_{\theta \in \mathbb{T}} \left\|F(W_{\leq d}(\theta, s)) - W_{\leq d}(\theta+\omega, \lambda s)\right\| < E_{tol} \end{equation}
In practice, since we know $K(\theta)$ and $W_{j}(\theta)$, $j=1, \dots, d$, at the values $\theta_{i}$, $i=0, \dots, N-1$, we search for the largest $D \in \mathbb{R}^{+}$ such that for all $s$ with $|s| < D $,  
\begin{equation} \max_{i=0, \dots, N-1} \left\|F(W_{\leq d}(\theta_{i}, s)) - W_{\leq d}(\theta_{i}+\omega, \lambda s)\right\| < E_{tol} \end{equation}
The simplest way of finding $D$ is to first use bisection to find the largest $D_{i}$ such that $\left\|F(W_{\leq d}(\theta_{i}, s)) - W_{\leq d}(\theta_{i}+\omega, \lambda s)\right\| < E_{tol} $ for all $s \in (-D_{i},D_{i})$. After doing this for $i=0, \dots, N-1$, $D$ will be the minimum of all the $D_{i}$. 

We computed the fundamental domains of validity for 5 different 3:4 and 5:6 PERTBP resonant torus manifold parameterizations. We found that the domains for $d = 5$ were 50-200 times larger than those for $d = 1$. For linear parameterizations ($d = 1$), the domain size $D$ of all test cases was on the order of $10^{-4}$ at best, generally $10^{-5}$. However, for the degree-5 parameterizations $W_{d \leq 5}(\theta, s)$, $D$ was on the order of $10^{-3}$ or 0.01. Higher degree parameterizations may improve even further. Note that a larger fundamental domain means that less numerical integration is required for manifold globalization, reducing the computation time. 

\section{Globalization, Regularization, and Visualization} \label{globoviz}

At this point, we have accurate local representations of stable and unstable manifolds of our stroboscopic map invariant circles. Given a manifold's Fourier-Taylor parameterization $W_{p}(\theta,s)$ and its fundamental domain $\mathcal{D} =\mathbb{T} \times (-D,D)$, the image $W_{p}(\mathcal D)$ gives us a piece of the manifold in the map phase space $\mathbb{R}^{4}$. However, this subset of the manifold will be close to its base invariant circle $K(\theta)$; generally, it is motions on the manifold further away from the torus that are of interest for applications. Hence, we need to extend our Fourier-Taylor parameterization $W_{p}:\mathcal{D}  \rightarrow \mathbb{R}^{4}$ to a function $W: \mathbb{T} \times \mathbb{R} \rightarrow \mathbb{R}^{4}$ parameterizing the entire manifold, with $W=W_{p}$ on $\mathcal D$. This is referred to as globalization. 

Recall from Eq. \eqref{invariancequationfinal} that $W$ must satisfy $F(W(\theta,s)) = W(\theta+\omega, \lambda s)$. Applying this repeatedly, we have that $F^{k}(W(\theta,s)) = W(\theta+k \omega, \lambda^{k}s)$, where the superscript $k \in \mathbb{Z}^{+}$ refers to function composition. We can rewrite this as 
\begin{equation} \label{globou} W(\theta, s) = F^{k}(W(\theta-k \omega, \lambda^{-k}s)) \end{equation}
\begin{equation} \label{globos} W(\theta, s) = F^{-k}(W(\theta+k \omega, \lambda^{k}s)) \end{equation}
Eq. \eqref{globou}-\eqref{globos} allow us to define $W(\theta, s)$ for all $(\theta, s) \in \mathbb{T} \times \mathbb{R}$. If $W$ is an unstable manifold with $|\lambda| > 1$, choose $k \geq 0$ such that $| \lambda^{-k}s | < D$ and use $W_{p}$ to evaluate Eq. \eqref{globou}; if $W$ is a stable manifold with $|\lambda| < 1$, take $k \geq 0$ such that $| \lambda^{k}s | < D$ and evaluate Eq. \eqref{globos}. The map $F^{k}$ (or $F^{-k}$) is just a time $2\pi k/\Omega_{p}$ (or $-2\pi k/\Omega_{p}$) numerical integration. $W(\theta,s)$ thus defined satisfies $F(W(\theta,s)) = W(\theta+\omega, \lambda s)$ for all $(\theta,s) \in \mathbb{T} \times \mathbb{R}$, so the image $W(\mathbb{T} \times \mathbb{R})$ is $F$-invariant. Hence, Eq. \eqref{globou}-\eqref{globos} give us a global representation of the entire stable or unstable manifold. Note that $W$ can be differentiated easily with respect to $\theta$ and $s$ to get the tangent vectors to the manifold, as $DF^{\pm k}$ is a state transition matrix and $DW_{p}$ only requires polynomial or Fourier series differentiation. This can be useful for differential correction of approximate heteroclinic connections; see \citet{kumar2021feb} for details.

\subsection{Mesh Representations of Globalized Manifolds} \label{meshSection}

For visualization, we often want to calculate a mesh of many points on the manifold, rather than just a few $W(\theta,s)$ values. To do this, we first take an evenly-spaced grid of $L$ $s$-values $\{s_{j}\}$ from $-D$ to $D$, in addition to  our grid of $N$ $\theta$ values $\theta_{i}$, and then directly evaluate the Fourier-Taylor parameterization to  compute $W(\theta_{i}, s_{j})$ for each $i = 0, \dots, N-1$, $j = 1, \dots, L$. Next, we repeatedly apply $F$ or $F^{-1}$ to the $W(\theta_{i}, s_{j})$ to get the points $W(\theta_{i}+k\omega, \lambda^{k}s_{j}) = F^{k}(W(\theta_{i}, s_{j}))$ if $|\lambda| > 1$ or $W(\theta_{i}-k\omega, \lambda^{-k}s_{j}) = F^{-k}(W(\theta_{i}, s_{j}))$ if $|\lambda| < 1$, for  $k = 0, 1, 2, \dots$ up to some $k_{max} \in \mathbb{Z}^{+}$. The numerical integrations required in this step may require use of regularized equations of motion, which we will discuss in Section \ref{regSection}. Finally, we use the translation algorithm given in Eq. \eqref{translate} to find the points $W(\theta_{i}, \lambda^{k}s_{j})$ if $|\lambda| > 1$ or $W(\theta_{i}, \lambda^{-k}s_{j})$ if $|\lambda| < 1$; we need all points to be at the same set of $N$ $\theta_{i}$ values when trying to create a manifold mesh that can be plotted. 

By following this procedure, we get a discretized, plottable representation of the manifold subset $\{ W(\theta,s): (\theta,s) \in \mathbb{T} \times [-M, M] \}$, where $M=\lambda^{k_{max}}D$ if $|\lambda|>1$ and $M=\lambda^{-k_{max}}D$ if $|\lambda|<1$.  Note that the numerical integrations can be parallelized across $\theta_{i}$ values, which we took advantage of. A 3D projection of an example globalized stable manifold mesh (denoted $W^{s}$) of a 3:4 Jupiter-Europa PERTBP invariant circle is given in Fig. \ref{fig:globalmani34}, with $N=1024$, $L=101$, $k_{max} = 6$. 

\begin{figure}
\begin{centering}
\includegraphics[width=0.65\columnwidth]{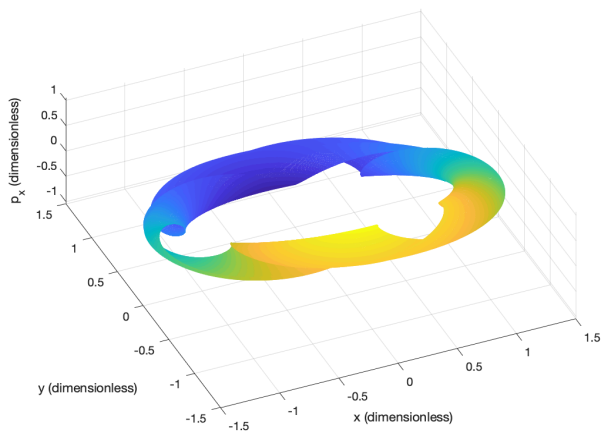}
\caption{ \label{fig:globalmani34} $(x,y,p_{x})$ projection of Jupiter-Europa PERTBP 3:4 $W^{s}$ for $\omega=1.559620297$}
\end{centering}
\end{figure}

\subsection{The Need for Regularization: An Extension of Levi-Civita to the PERTBP} \label{regSection}

In the equations of motion for the PERTBP and other periodically-perturbed PCRTBP models, the positions of the two large masses $m_{1}$ and $m_{2}$ are singularities. However, when numerically integrating points forwards or backwards during the manifold mesh computation described in Section \ref{meshSection}, it is possible for some points' trajectories to pass extremely close to the singularity at $m_{2}$. Moreover, this can indicate that the manifold being computed actually passes through the $m_{2}$ singularity. Such behavior was observed, for example, during computation of manifold meshes for 5:6 Jupiter-Europa PERTBP tori. These close approaches to $m_{2}$ can result in numerical issues, including lack of integrator convergence. 

In the PCRTBP, the Levi-Civita regularization is very commonly used to compute trajectories which pass near or through a singularity; see \citet{celletti} for full details. First, a canonical coordinate transformation is applied to the PCRTBP Hamiltonian $H_{0}$ from Eq. \eqref{pcrtbpH}. This is followed by the addition of a pair of action-angle variables to the transformed Hamiltonian; the new action's value is set to $-H_{0}$, which has a constant value along the trajectory. This finally allows a time-rescaling to be used which cancels the singularity. This method, however, relies on the fact that $H_{0}$ is constant along PCRTBP trajectories. For our periodically-perturbed models, this is not the case. Hence, some modification is required. 

For the PERTBP, the singularity corresponding to $m_{2}$ is the time-varying point $(x,y) = ( (1-\mu)(1-\varepsilon \cos E(t)),0 )$. We now present the derivation of the modified Levi-Civita regularization of $m_{2}$ for the PERTBP; we expect very similar methods to apply for other periodically-perturbed PCRTBP models as well. Readers primarily interested in using the final regularized equations for numerical integration should skip to Section \eqref{regUsage}. The following is heavily inspired by \citet{celletti}. 

First, take the PERTBP Hamiltonian $H_{\varepsilon}$ from Eq. \eqref{pertbpH} and add a momentum variable $p_{t}$ conjugated to $t$. The Hamiltonian and equations of motion become
\begin{equation}  \label{pertbpH_new} \bar H_{\varepsilon}(p_{x}, p_{y}, p_{t}, x,y,t)= p_{t}+ \frac{p_{x}^{2}+p_{y}^{2}}{2} + n(t)(p_{x}y -p_{y}x) - \frac{1-\mu}{r_{1}} - \frac{\mu}{r_{2}} \end{equation}
\begin{equation} \label{H_EOM_new} \dot x = \frac{\partial \bar H_{\varepsilon}}{\partial p_{x}} \quad \dot y = \frac{\partial \bar H_{\varepsilon}}{\partial p_{y}} \quad \dot t = \frac{\partial \bar H_{\varepsilon}}{\partial p_{t}} \quad \quad \dot p_{x} = -\frac{\partial \bar H_{\varepsilon}}{\partial x} \quad \dot p_{y} = -\frac{\partial \bar H_{\varepsilon}}{\partial y} \quad \dot p_{t} = -\frac{\partial \bar H_{\varepsilon}}{\partial t} \end{equation}
where $r_{1} = \sqrt{\big(x+\mu(1+\chi(t))\big)^{2} + y^{2}} $, $ r_{2} = \sqrt{\big(x-(1-\mu)(1+\chi(t))\big)^{2} + y^{2}}$, and $\chi(t) = -\varepsilon \cos E(t)$. Note that adding $p_{t}$ does not change the values of $\dot x$, $\dot y$, $\dot p_x$, $\dot p_y$, and $\dot t = \partial \bar H_{\varepsilon} / \partial p_{t} = 1$ as compared to using Eq. \eqref{pertbpH}. However, unlike $H_{\varepsilon}$, the new Hamiltonian $\bar H_{\varepsilon}$ does remain constant along trajectories in $(p_{x}, p_{y}, p_{t}, x,y,t)$ space. Also, given an initial condition $(x,y,p_{x},p_{y},t)$ to be propagated, the initial value of $p_{t}$ should be set so that $\bar H_{\varepsilon}=0$; this will be important later on.

Now, we perform a canonical coordinate transformation. This is required in order to ``straighten out" certain trajectories passing through the singularity which make sharp bends in physical space \citep{cellettiReg}. Define a generating function 
\begin{equation}  \label{genfunc} W(p_{x}, p_{y}, p_{t}, X, Y, T)= p_{x}\left(X^{2}-Y^{2}+(1-\mu)(1+\chi(T)) \right) + p_{2}(2XY) +p_{t}T \end{equation}
which is a function of the old momenta and new configuration space coordinates. Then, this defines a transformation between the old $(p_{x}, p_{y}, p_{t}, x, y, t)$ variables and new $(P_{X}, P_{Y}, P_{T}, X, Y, T)$ variables through the relations \citep{thirring}
\begin{align} \begin{split} \label{canontrans}
&x = \frac{\partial W}{\partial p_{x}} = X^{2}-Y^{2}+(1-\mu)(1+\chi(T))  \quad \quad  y = \frac{\partial W}{\partial p_{y}} = 2XY \\
 &P_{X} = \frac{\partial W}{\partial X} = 2p_{x}X+2p_{y}Y \quad \quad  \quad   P_{Y} = \frac{\partial W}{\partial Y} = -2p_{x}Y+2p_{y}X \\
  &\quad t = \frac{\partial W}{\partial p_{t}} = T   \quad \quad \quad \quad \quad \quad \quad \quad P_{T} = \frac{\partial W}{\partial T} = (1-\mu)p_{x} \frac{d \chi}{d t}(T) + p_{t}  \\
  \end{split} \end{align}
Eq. \eqref{canontrans} gives us $x$, $y$, and $t$ in terms of the new variables. We can also solve for $p_{x}$ and $p_{y}$ to get $p_{x}=\frac{2}{R}(P_{X}X-P_{Y}Y)$ and  $p_{y}=\frac{2}{R}(P_{X}Y+P_{Y}X)$, where $R = 4(X^{2}+Y^{2})$. This then gives us $p_{t} =  P_{T}-\frac{2}{R}(1-\mu)(P_{X}X-P_{Y}Y) \frac{d \chi}{d t}(T) $. Note that in the new variables, $r_{2} = \sqrt{(X^{2}-Y^{2})^{2}+(2XY)^{2}}=R/4$. 

Substituting the previous expressions for $(p_{x}, p_{y}, p_{t}, x, y, t)$ into Eq. \eqref{pertbpH_new} gives  
\begin{align} \begin{split}  \label{pertbpH_transformed} \mathcal H_{\varepsilon}(P_{X},& P_{Y}, P_{T}, X, Y, T)= \\
& P_{T}-\frac{2}{R}(1-\mu)(P_{X}X-P_{Y}Y) \frac{d \chi}{d t}(T) + \frac{P_{X}^{2}+P_{Y}^{2}}{2R}  \\
&+ 2 n(T) \left[\frac{1}{4}(P_{X}X-P_{Y}Y) - \frac{(1-\mu)}{R}(1+\chi(T))(P_{X}Y+P_{Y}X)\right] \\
&- \frac{1-\mu}{\sqrt{(X^{2}+Y^{2})^{2}+(1+\chi(T))^{2}+2(X^{2}-Y^{2})(1+\chi(T))}} - \frac{4\mu}{R}  
 \end{split} \end{align}
The $m_{2}$ singularity is now at $(X,Y)=(0,0)$, where $R=4r_{2}=0$. Since this was a canonical transformation, the equations of motion in the new coordinates will be
\begin{equation}  \dot X = \frac{\partial \mathcal H_{\varepsilon}}{\partial P_{X}} \,\,\, \dot Y = \frac{\partial \mathcal H_{\varepsilon}}{\partial P_{Y}} \,\,\, \dot T = \frac{\partial \mathcal H_{\varepsilon}}{\partial P_{T}} \,\,\,\, \,\,\,\, \dot P_{X} = -\frac{\partial \mathcal H_{\varepsilon}}{\partial X} \,\,\, \dot P_{Y} = -\frac{\partial \mathcal H_{\varepsilon}}{\partial Y} \,\,\, \dot P_{T} = -\frac{\partial \mathcal H_{\varepsilon}}{\partial T} \end{equation}
To regularize the singularity at $R=0$, we want to be able to use $R \mathcal H_{\varepsilon}$ instead of $\mathcal H_{\varepsilon}$. For this, define a rescaled time $s$ such that $dt = R \, ds$. Then, we have that $\frac{d}{ds}=\frac{d}{dt} \frac{dt}{ds}=R \frac{d}{dt}$. Thus, letting prime ($'$) denote $d/ds$,
\begin{equation} \begin{gathered} X' = R \frac{\partial \mathcal H_{\varepsilon}}{\partial P_{X}} \quad Y' = R \frac{\partial \mathcal H_{\varepsilon}}{\partial P_{Y}} \quad T' = R \frac{\partial \mathcal H_{\varepsilon}}{\partial P_{T}} \\
  P_{X}' = -R \frac{\partial \mathcal H_{\varepsilon}}{\partial X} \quad P_{Y}' = -R \frac{\partial \mathcal H_{\varepsilon}}{\partial Y} \quad P_{T}' = -R \frac{\partial \mathcal H_{\varepsilon}}{\partial T} \end{gathered} \end{equation}
Since $R$ is a function of only $X$ and $Y$, it is immediate that $X' = \frac{\partial [R \mathcal H_{\varepsilon}]}{\partial P_{X}} $, $ Y' =  \frac{\partial [R \mathcal H_{\varepsilon}]}{\partial P_{Y}} $, $T' =  \frac{\partial [R \mathcal H_{\varepsilon}]}{\partial P_{T}} $, and $P_{T}' = - \frac{\partial [R \mathcal H_{\varepsilon}]}{\partial T}$. Furthermore, since $p_{t}$ was chosen earlier to ensure $\bar H_{\varepsilon}=0$, we also will have $\mathcal H_{\varepsilon}=0$ along the trajectory in $(P_{X}, P_{Y}, P_{T}, X, Y, T)$ space. Hence, we find that $\frac{\partial [R \mathcal H_{\varepsilon}]}{\partial X} = R \frac{\partial  \mathcal H_{\varepsilon}}{\partial X} + \mathcal H_{\varepsilon} \frac{\partial R }{\partial X}=R \frac{\partial  \mathcal H_{\varepsilon}}{\partial X} $. This yields $P_{X}' = - \frac{\partial [R \mathcal H_{\varepsilon}]}{\partial X}$; we similarly find $P_{Y}' = - \frac{\partial [R \mathcal H_{\varepsilon}]}{\partial Y}$. As $R \mathcal H_{\varepsilon}$ has no singularity at $R=0$, we thus obtain the $m_{2}$-regularized time-$s$ equations of motion
\begin{equation} \label{primehatH} \begin{gathered}  X' = \frac{\partial [R \mathcal H_{\varepsilon}]}{\partial P_{X}} \quad Y' = \frac{\partial [R \mathcal H_{\varepsilon}]}{\partial P_{Y}} \quad T' = \frac{\partial [R \mathcal H_{\varepsilon}]}{\partial P_{T}} \\
P_{X}' = - \frac{\partial [R \mathcal H_{\varepsilon}]}{\partial X} \quad P_{Y}' = - \frac{\partial [R \mathcal H_{\varepsilon}] }{\partial Y} \quad P_{T}' = - \frac{\partial [R \mathcal H_{\varepsilon}]}{\partial T} \end{gathered} \end{equation}
 
\subsubsection{Usage of Regularized PERTBP Equations of Motion} \label{regUsage}
 
Let $(x^{i}, y^{i}, p^{i}_{x}, p^{i}_{y})$ be an initial state we wish to integrate from $t=t_{i}$ to $t_{f}$ in the PERTBP. Recall $H_{\varepsilon}$ from Eq. \eqref{pertbpH}, and $\bar H_{\varepsilon}$ from Eq. \eqref{pertbpH_new}, with $\chi(t) = -\varepsilon \cos E(t)$. To use the $m_{2}$-regularized equations of motion for this integration, we:
\begin{enumerate}
\item Set $p^{i}_{t} = - H_{\varepsilon}(p^{i}_{x}, p^{i}_{y}, x^{i},y^{i},t_{i})$, so that $\bar H_{\varepsilon}(p^{i}_{x}, p^{i}_{y}, p^{i}_{t}, x^{i},y^{i},t_{i}) = 0$. 
\item Compute initial $(P^{i}_{X}, P^{i}_{Y}, P^{i}_{T}, X^{i}, Y^{i}, T^{i})$ using the equations (where $j = \sqrt{-1}$)
\begin{align} \label{oldtonewtrans} \begin{split} 
 &X+jY = \big( x - (1-\mu)(1+\chi(t)) + j y \big)^{1/2}  \\
 P_{X} = &2p_{x}X+2p_{y}Y \quad \quad  \quad\quad   P_{Y} = -2p_{x}Y+2p_{y}X \\
   &T = t  \quad   \quad \quad \quad \quad \quad \quad P_{T} = (1-\mu)p_{x} \frac{d \chi}{d t}(t) + p_{t}  \\
  \end{split} \end{align} 
The sign chosen during the complex square root for $X+jY$ does not matter.
\item Integrate the initial condition $(P^{i}_{X}, P^{i}_{Y}, P^{i}_{T}, X^{i}, Y^{i}, T^{i})$ using Eq. \eqref{primehatH}, where $R=4(X^{2}+Y^{2})$ and $\mathcal H_{\varepsilon}$ is given by Eq. \eqref{pertbpH_transformed}. Only stop the integration when $T = t_{f}$; do not stop before this occurs, even if the integration time reaches $t_{f}$.  
\item Transform the resulting final state back to $(p_{x}, p_{y}, p_{t}, x,y,t)$ coordinates using
\begin{align}  \label{newtooldtrans} \begin{split} 
&x =  X^{2}-Y^{2}+(1-\mu)(1+\chi(T))  \quad \quad \quad  y = 2XY \\
 &p_{x}=\frac{2}{R}(P_{X}X-P_{Y}Y) \quad \quad \quad \quad   p_{y}=\frac{2}{R}(P_{X}Y+P_{Y}X) \\
  & t = T =t_{f}  \quad \quad \quad \quad \quad p_{t} =  P_{T}-\frac{2}{R}(1-\mu)(P_{X}X-P_{Y}Y) \frac{d \chi}{d t}(T)  \\
  \end{split} \end{align}
\end{enumerate}
The first line of Eq. \eqref{oldtonewtrans} should be interpreted as two real equations corresponding to setting real and imaginary parts of both sides equal. It follows from the first line of Eq. \eqref{canontrans} combined with the relation $ (X+jY)^{2} = X^{2}-Y^{2} + j (2XY) $. Also, for step 3 above, the requirement to stop integration when $T=t_{f}$ can be implemented using the ``events" functionality of MATLAB's ODE solvers, or the callback features in Julia's DifferentialEquations.jl library. 

The partial derivatives of $R \mathcal H_{\varepsilon}$ appearing in the equations of motion Eq. \eqref{primehatH} are straightforward to compute but lengthy, so we do not write them here. Throughout the steps listed above, as well as for computing the partial derivatives, we need the quantities $\frac{d\chi}{dt}$, $\frac{d^{2}\chi}{dt^{2}}$, and $\frac{dn}{dt}$. These are given by the equations 
\begin{equation*} \frac{d\chi}{dt} = \frac{\varepsilon \sin E}{1-\varepsilon \cos E} \quad \quad \frac{d^{2}\chi}{dt^{2}} = \frac{\varepsilon \cos E - \varepsilon^{2}}{(1-\varepsilon \cos E)^{3}}  \quad \quad  \frac{dn}{dt} = \frac{-2 \varepsilon  \sqrt{1-\varepsilon^{2}}}{(1-\varepsilon \cos E)^{4}}\sin E \end{equation*}
which can be derived from $\chi(t) = -\varepsilon \cos E(t)$, $n(t) = \frac{\sqrt{1-\varepsilon^{2}}}{(1-\varepsilon \cos E(t))^{2}}$, and the relation $\frac{dE}{dt} =  \frac{1}{1-\varepsilon \cos E}$ (which in turn follows from taking the time derivative of the standard Kepler's equation $M = E - \varepsilon \sin E$; see \citet{bmw}). 

\subsubsection{Computational Results}

\begin{figure}
\begin{centering}
\includegraphics[width=0.65\columnwidth]{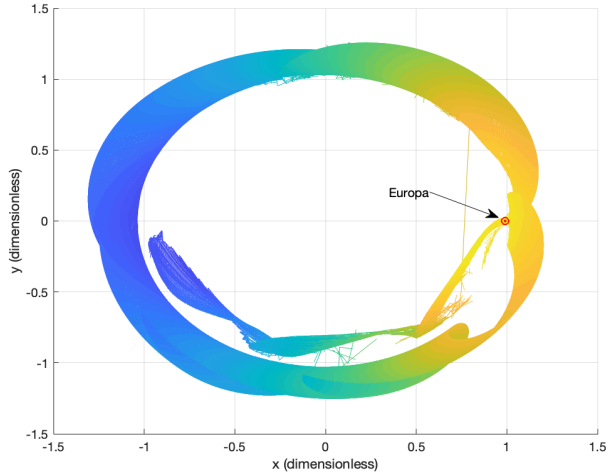}
\caption{ \label{fig:globalmani56} $(x,y)$ projection of Jupiter-Europa PERTBP 5:6 $W^{s}$ for $\omega=1.030011437$}
\end{centering}
\end{figure}

In Fig. \ref{fig:globalmani56}, we show a 2D projection of an example globalized stable manifold mesh for a 5:6 Jupiter-Europa PERTBP invariant circle. This was computed using the regularized PERTBP equations of motion to evaluate $F^{-k}$ in the procedure described in Section \ref{meshSection}; in this case, $N=2048$, $L=101$, $k_{max} = 6$. We have filtered the computed mesh points so as to only plot those which did not result in a very ``visually discontinuous" mesh; this filtering is needed during visualization, since close flybys of $m_{2}$ can send points which started close together in extremely different directions. Nevertheless, even after discarding some mesh points, it is clearly visible in the figure that the manifold passes through the singularity at $m_{2}$ (Europa, marked as a red circle). Using the regularized equations, we experienced no warnings of integrator divergence during program runtime, which were encountered when using the unregularized equations. 

We also used the regularized equations of motion to recompute the 3:4 $W^{s}$ manifold mesh shown earlier in Fig. \ref{fig:globalmani34}. The results matched those which were obtained earlier when using Eq. \eqref{H_EOM} and \eqref{pertbpH} for the numerical integrations, thus verifying the correctness of the regularization procedure. We carried out this computation in Julia, using the DP5 integrator and parallelizing across $\theta_{i}$ with the EnsembleProblem feature of DifferentialEquations.jl \citep{DifferentialEquations.jl-2017}; the computation of the 3:4 manifold with $N=1024$, $L=101$, $k_{max} = 15$ took approximately 250 seconds on the same quad core i7 laptop CPU used earlier. 

\section{Conclusion}

In this paper, we first developed a quasi-Newton method for the simultaneous computation of unstable invariant circles and their symplectic conjugate center, stable, and unstable bundles for stroboscopic maps of periodically-perturbed PCRTBP models. Our method improves the computational complexity of the torus calculation to $O(N \log N)$ as compared to $O(N^{3})$ for the methods \hl{used in almost all the existing astrodynamics literature}, in addition to giving useful information on the torus stable and unstable directions. \hl{Our method also extends the $O(N \log N)$ method of \citet{haroLlave} and \citet{haroetal} to unstable tori with center directions, as is the case for the vast majority of celestial mechanics applications}. We used this quasi-Newton method for continuation of tori and bundles by both perturbation parameter and rotation number, and described how to initialize the continuation from PCRTBP periodic orbits. We also gave a set of numerical best practices to aid in quasi-Newton method convergence. 

After finding the tori and bundles, we used the results of the continuation to start an order-by-order method for the computation of Fourier-Taylor series parameterizations of stable and unstable manifolds for the invariant circles. We found significant improvements in accuracy and fundamental domain size compared to linear manifold approximations. Finally, we were able to extend the parameterizations to compute points of the manifolds outside the fundamental domain, with the aid of a modified Levi-Civita regularization which we derived for the PERTBP. We expect that similar methods can be used to regularize other periodically-perturbed PCRTBP models as well. 

The tools developed were tested in the Jupiter-Europa PERTBP, with the calculations of the circles, bundles, and manifold parameterizations taking just a few seconds on a 2017-era laptop with a quad-core Intel i7 CPU. Our Julia program for computation of meshes of globalized manifold points took a few minutes for each manifold, due to the large number of numerical integrations involved. As we describe in another paper \citep{kumar2021feb}, with the help of modern computer graphics processing units, these manifold parameterizations and meshes can be used to very rapidly search for and accurately compute intersections of stable and unstable manifolds leading to heteroclinic connections. The methods presented in this paper can form an important component for low-energy mission design and transfer trajectories in such periodically-perturbed PCRTBP models. 

\begin{acknowledgements}
This work was supported by a NASA Space Technology Research Fellowship. This research was carried out in part at the Jet Propulsion Laboratory, California Institute of Technology, under a contract with the National Aeronautics and Space Administration.  R.L was supported in part by NSF grant DMS 1800241. Special thanks to Dr. Lei Zhang for providing code from his earlier work  \citep{zhang} which formed a base off of which to build much the code written for the quasi-Newton method and manifold parameterization. Also many thanks to Professors Alex Haro, Josep-Maria Mondelo, Angel Jorba, and Josep Masdemont for useful discussions in Barcelona which were of great assistance in this work. B.K.'s 2019 visit to Barcelona was funded by European Commission grant H2020-MSCA-RISE-2015, Project \#734557 (Trax). Many thanks to the Italian Gruppo Nazionale Per La Fisica Matematica for hosting  B.K. at the Fall 2019 summer school in Ravello, Italy. An earlier and less detailed version of this paper was presented and included in the proceedings of the 2020 AAS/AIAA Astrodynamics Specialist Conference (paper number AAS 20-694). 
\end{acknowledgements}

\appendix
\section{Proof of Vanishing $E_{CC}$ Average} \label{appendix}
In Section \ref{Pstep}, it was mentioned that the average of $E_{CC}(\theta)$ goes to zero with each quasi-Newton step. We can prove this using a method somewhat inspired by the vanishing lemma proof of \citet{fontichLlaveSire}. For ease of notation, denote this average as $\lambda_{c} = \hat E_{CC}(0)$, and $\tilde E_{CC}(\theta) = E_{CC}(\theta) - \lambda_{c}$, so that $\tilde E_{CC}$ has zero average. Also write $\bold{e}_{2} = \left[ 0 \,\, 1 \, \,0 \,\, 0\right]^{T}$. 
\begin{proof}
Let $\bold{v}_{c}(\theta)$ denote the second column of $P$, and define $E_{C}(\theta) = \left[ E_{LC} \, \tilde E_{CC} \, E_{SC} \, E_{UC} \right]^{T}$; note that $E_{C}(\theta) + \lambda_{c} \bold{e}_{2}$ is simply the second column of $E_{red}(\theta)$. Left multiplying the definition of $E_{red}$ (Eq. \eqref{Ereddef}) by $P(\theta+\omega)$ and taking column 2 of the result gives
\begin{equation} \label{col2PEred} P(\theta+\omega) \left( E_{C}(\theta) + \lambda_{c} \bold{e}_{2} \right) = DF(K(\theta)) \bold{v}_{c}(\theta) -  \bold{v}_{c}(\theta+ \omega) - T(\theta) DK(\theta+\omega) \end{equation} 
Define $\mathcal E_{C}(\theta) = P(\theta+\omega) E_{C}(\theta)$, and note that $P(\theta+\omega) \bold{e}_{2} = \bold{v}_{c}(\theta+\omega)$. Thus, Eq. \eqref{col2PEred} gives
\begin{equation} DF(K(\theta)) \bold{v}_{c}(\theta) = (1+\lambda_{c}) \bold{v}_{c}(\theta+ \omega) + T(\theta) DK(\theta+\omega) +\mathcal E_{C}(\theta) \end{equation} 
Now, differentiating Eq. \eqref{Edef} yields $DF(K(\theta))DK(\theta) = DK(\theta+\omega)+DE(\theta)$. As mentioned in the proof of Lemma \ref{sympconjLemma}, $F$ satisfies $\Omega(\bold{v}_{1}, \bold{v}_{2}) = \Omega(DF(K(\theta)) \bold{v}_{1}, DF(K(\theta)) \bold{v}_{2})$ for all $\bold{v}_{1}$, $\bold{v}_{2} \in \mathbb{R}^{4}$, where $\Omega$ is the symplectic form defined by $ \Omega(\bold{v}_{1}, \bold{v}_{2}) =  \bold{v}_{1}^{T} J \bold{v}_{2} $. Thus, 
\begin{align} \begin{split} \Omega(\bold{v}_{c}(\theta), &DK(\theta)) = \Omega(DF(K(\theta)) \bold{v}_{c}(\theta), DF(K(\theta)) DK(\theta))  \\
&= \Omega \Big((1+\lambda_{c}) \bold{v}_{c}(\theta+ \omega) + T(\theta) DK(\theta+\omega) +\mathcal E_{C}(\theta) , DK(\theta+\omega)+DE(\theta) \Big) \\
&= (1+\lambda_{c}) \Omega \Big( \bold{v}_{c}(\theta+ \omega) , DK(\theta+\omega) \Big)  + \mathcal O(DE(\theta)) + \mathcal O(\mathcal E_{C}(\theta)) 
\end{split} \end{align} 
where we use $\Omega(DK(\theta+\omega) , DK(\theta+\omega))=0$ to get the last line. This yields
\begin{align} \begin{split} \int_{0}^{2\pi} \Omega(\bold{v}_{c}(\theta), DK(\theta)) \, d\theta= (1+\lambda_{c})  \int_{0}^{2\pi} \Omega \big( \bold{v}_{c}(\theta+ \omega) &, DK(\theta+\omega) \big) \, d\theta \\ 
&+ \mathcal O(DE(\theta)) + \mathcal O(\mathcal E_{C}(\theta)) 
 \end{split}\end{align} 
Recognizing that $ \int_{0}^{2\pi} \Omega(\bold{v}_{c}(\theta), DK(\theta)) \, d\theta= \int_{0}^{2\pi} \Omega ( \bold{v}_{c}(\theta+ \omega) , DK(\theta+\omega) ) \, d\theta $, we have 
 \begin{equation} \lambda_{c} \int_{0}^{2\pi} \Omega(\bold{v}_{c}(\theta), DK(\theta)) \, d\theta=  \mathcal O(DE(\theta)) + \mathcal O(\mathcal E_{C}(\theta)) 
\end{equation} 
 Now, for $E$ and $ E_{red}$ small enough, $\bold{v}_{c}(\theta)$ is an approximate symplectic conjugate to $DK(\theta)$. This means that $\Omega(\bold{v}_{c}(\theta), DK(\theta)) \approx 1$ (see Eq. \eqref{areaOne}), so $ \int_{0}^{2\pi} \Omega(\bold{v}_{c}(\theta), DK(\theta)) \, d\theta = \mathcal O(1)$. Hence, it must be that $\lambda_{c} =  \mathcal O(DE(\theta)) + \mathcal O(\mathcal E_{C}(\theta)) $, so that as the quasi-Newton method reduces $DE(\theta)$ and $E_{C}(\theta)$ (and thus also $\mathcal E_{C}(\theta)$) to zero, $\lambda_{c}$ goes to zero as well. \qed
 \end{proof}
 
When carrying out the quasi-Newton step of Section \ref{Pstep} for correcting $P$ and $\Lambda$, Eq. \eqref{E_LC}, \eqref{E_SC}, and \eqref{E_UC} can be solved exactly (including for non-zero averages on the LHS), which quadratically reduces the $E_{LC}$, $E_{SC}$, and $E_{UC}$ components of $E_{C}(\theta)$ (using the definitions given in the above proof). On the other hand, Eq. \eqref{E_CC} for $E_{CC}$ can be written as 
\begin{equation} -E_{CC}(\theta) = -\tilde E_{CC}(\theta) -\lambda_{c}= Q_{CC}(\theta) - Q_{CC}(\theta+\omega) \\
\end{equation} 
As mentioned near the end of Section \ref{Pstep}, we ignore the nonzero LHS average $-\lambda_{c}=-\hat E_{CC}(0)$ when solving for $Q_{CC}$. Thus, what happens is that the zero-average part $\tilde E_{CC}(\theta)$ is quadratically reduced by the quasi-Newton step, but $\lambda_{c}$ may initially remain in $E_{red}$. However, the quadratic reductions in $E_{LC}$,  $\tilde E_{CC}(\theta)$, $E_{SC}$, and $E_{UC}$, and subsequently also in $E(\theta)$ during the following $K$-correction step, quadratically reduce $E_{C}(\theta)$ and $DE(\theta)$. This necessitates a reduction in $\lambda_{c}$ as described at the end of the above proof. 

\bibliographystyle{spbasic}   
\bibliography{references}   

\end{document}